\documentclass[12pt,draft]{article}

\evensidemargin0cm \oddsidemargin0cm \textwidth16cm
\textheight23cm \topmargin-2cm
\usepackage{amsmath}
\usepackage{amssymb}
\usepackage{amsthm}
\usepackage{bbm}
\usepackage{MnSymbol}
\usepackage{xcolor}
\usepackage[T2A]{fontenc}
\usepackage[utf8]{inputenc}
\usepackage{enumerate}
\usepackage[hidelinks]{hyperref}
\hypersetup{
colorlinks = true,
linkcolor = blue,
linkbordercolor = {white},
citecolor = blue}

\newtheorem{theorem}{Theorem}

\newtheorem{lemma}{Lemma}
\newtheorem{cor}{Corollary}

\DeclareMathOperator{\1}{\mathbbm{1}}

\newcommand{\mmp}{\mathbb{P}}
\newcommand{\mn}{\mathbb{N}}
\newcommand{\me}{\mathbb{E}}
\newcommand{\eee}{{\rm e}}
\newcommand{\mr}{\mathbb{R}}

\begin{document}
\title{On intermediate levels of nested occupancy scheme in random environment generated by stick-breaking: the case of heavy tails}

\author{Oksana Braganets\footnote{Faculty of Computer Science and Cybernetics, Taras Shevchenko National University of Kyiv, Ukraine; e-mail address: oksanabraganets@knu.ua} \ \ and \ \ Alexander Iksanov\footnote{Faculty of Computer Science and Cybernetics, Taras Shevchenko National University of Kyiv, Ukraine; e-mail address: iksan@univ.kiev.ua}}
\maketitle

\begin{abstract}
\noindent
We investigate a nested balls-in-boxes scheme in a random environment. The boxes follow a nested hierarchy, with infinitely many boxes in each level, and the hitting probabilities of boxes are
random and obtained by iterated fragmentation of a unit mass. The hitting probabilities of the first-level boxes are given by a stick-breaking model $P_k = W_1 W_2\cdot \ldots\cdot W_{k-1}(1- W_k)$ for $k \in \mathbb{N}$, where $W_1$, $W_2,\ldots$ are independent copies of a random variable $W$ taking values in $(0,1)$. The infinite balls-in-boxes scheme in the first level is known as a Bernoulli sieve. We assume that the mean of $|\log W|$ is infinite and the distribution tail of $|\log W|$ is regularly varying at $\infty$. Denote by $K_n(j)$ the number of occupied boxes in the $j$th level provided that there are $n$ balls and call the level $j$ intermediate, if $j = j_n \to \infty$ and $j_n = o((\log n)^a)$ as $n \to \infty$ for appropriate $a>0$. We prove that, for some intermediate levels $j$, finite-dimensional distributions of the process $(K_n(\lfloor j_n u\rfloor))_{u>0}$, properly normalized, converge weakly as $n\to\infty$ to those of a pathwise Lebesgue-Stieltjes integral, with the integrand being
an exponential function and the integrator being an inverse stable subordinator.
The present paper continues the line of investigation
initiated in the articles Buraczewski, Dovgay and Iksanov (2020) and Iksanov, Marynych and Samoilenko (2022) in which the random variable $|\log W|$ has a finite second moment, and Iksanov, Marynych and Rashytov (2022) in which $|\log W|$ has a finite mean and an infinite second moment.
\end{abstract}

\noindent Key words: Bernoulli sieve; infinite occupancy scheme; perturbed random walk; random environment; weak convergence of finite-dimensional distributions; weighted branching process.

\noindent 2020 Mathematics Subject Classification: 60F05, 60J80.

\section{Introduction}

\subsection{Definition of the model}

Let $P_1$, $P_2,\ldots$ be nonnegative random variables with an arbitrary joint distribution satisfying $\sum_{k\geq 1}P_k=1$ almost surely (a.s.). The sequence $(P_k)_{k\geq 1}$ is interpreted as a {\it random environment}.
An {\it occupancy scheme in random environment} is defined as follows. Conditionally on $(P_k)_{k\geq 1}$, balls are allocated independently over an infinite array of boxes $1$, $2,\ldots$ with probability $P_k$ of hitting box $k$. The occupancy scheme in random environment is called {\it infinite}, if the number of positive probabilities is infinite a.s. One of the most popular infinite occupancy schemes in random environment is called {\it Bernoulli sieve}. It corresponds to $(P_k)_{k\geq 1}$ which follow a stick-breaking model
\begin{equation}\label{eq:stick}
P_k=W_1W_2\cdot\ldots\cdot W_{k-1}(1-W_k),\quad k\in\mn:=\{1,2,\ldots\},
\end{equation}
where $W_1$, $W_2,\ldots$ are independent copies of a random variable $W$ taking values in $(0,1)$. The Bernoulli sieve was introduced in \cite{Gnedin:2004} and further investigated in many articles. Surveys of earlier works can be found in \cite{Gnedin+Iksanov+Marynych:2010, Iksanov:2016}. An incomplete list of more recent contributions includes \cite{Alsmeyer+Iksanov+Marynych:2017, Duchamps+Pitman+Tang:2019, Iksanov+Jedidi+Bouzzefour:2017, Pitman+Tang:2019, Pitman+Yakubovich:2017, Pitman+Yakubovich:2019}.

A {\it nested infinite occupancy scheme in random environment} is a hierarchical generalization of the infinite occupancy scheme in random environment which is defined by settling, in a consistent way, the sequence
of the occupancy schemes in random environment on the tree of a 
weighted branching process with positive weights (a multiplicative counterpart of a branching random walk). The scheme was introduced in \cite{Bertoin:2008} and further investigated in \cite{Braganets+Iksanov:2023, Buraczewski+Dovgay+Iksanov:2020, Gnedin+Iksanov:2020, Iksanov+Mallein:2022, Iksanov+Marynych+Samoilenko:2022, Joseph:2011}. Following \cite{Bertoin:2008, Gnedin+Iksanov:2020}, we now provide the definition of the nested occupancy scheme in random environment. To this end, some preparations are needed. Let $\mathbb{V}:=\bigcup_{n\geq 0}\mn^n$ be the set of all possible individuals of some population. The initial ancestor is identified with the empty word $\oslash$ and its weight is $P(\oslash)=1$. On some probability space, let $((P_k(v))_{k\geq 1})_{v\in\mathbb{V}}$ denote a family of independent copies of $(P_k)_{k\geq 1}$. An individual $v=v_1\ldots v_j$ in the $j$th level (generation) whose weight is denoted by $P(v)$ produces an infinite number of offspring residing in the $(j+1)$st level. The offspring of the individual $v$ are enumerated by $vk=v_1\ldots v_j k$, where $k\in\mn$, and the weights of the offspring are denoted by $P(vk)$. It is postulated that $P(vk)=P(v)P_k(v)$. Note that, for each $j\in\mn$, $\sum_{|v|=j}P(v)=1$ a.s., where, by convention, $\sum_{|v|=j}$ means that the sum is taken over all individuals in the $j$th level.

We are ready to explain the construction of the nested occupancy scheme in random environment. We identify individuals with boxes, so that the random weights (probabilities) of boxes in the subsequent levels are formed by the vectors $(P(v))_{|v|=1}=(P_k)_{k\geq 1}$, $(P(v))_{|v|=2}$ and so on. The collection of balls is the same for all levels. The balls are allocated, conditionally on $(P(v))_{v\in\mathbb{V}}$, according to the following rule.
At time $0$, infinitely many balls are collected in the box $\oslash$. At the time $n\in\mn$, one of the balls leaves the root $\oslash$ and falls, independently of the $(n-1)$ balls that have left the root earlier, into the box $v$ in the first level with probability $P(v)$. Simultaneously, it falls into the box $vi_1$ in the second level with probability $P(vi_1)/P(v)$, into the box $vi_1i_2$ in the third level with probability $P(vi_1i_2)/P(vi_1)$ and so on, indefinitely. At time $n$, that is, when $n$ balls have already spread over the tree, a box is deemed occupied provided it was hit by some ball (out of $n$) on its way over the levels.

In what follows, we assume that the probabilities $(P_k)_{k\geq 1}$ are given by \eqref{eq:stick} and that these and the outcome of throwing balls are defined on a common probability space. Let $K_n(j)$ be the number of occupied boxes at time $n$ in the $j$th level. Assuming that the mean of $|\log W| $ is infinite and the distribution tail of $|\log W|$ is regularly varying at $\infty$, we investigate the distributional behavior of $K_n(j)$ when $j=j_n\to\infty$ and $j_n=o((\log n)^a)$ as $n \to \infty$ for appropriate $a>0$. We call {\it intermediate} the levels $j$ satisfying the latter two properties. The analogous problem was studied in \cite{Buraczewski+Dovgay+Iksanov:2020, Iksanov+Marynych+Samoilenko:2022} in the situation in which the variable $|\log W|$ has a finite second moment, and in \cite{Iksanov+Marynych+Rashytov:2022} in the situation in which  $|\log W|$ has a finite mean and an infinite second moment.

By Theorem 1 in \cite{Joseph:2011}, the height of the scheme defined by $\tau_n:=\inf\{j\geq 1: K_n(j)=n\}$ is of order $\log n$, irrespective of the distribution tail of $|\log W|$. The asymptotic behavior of the scheme in the levels close to the height (so called {\it late} levels) was investigated in \cite{Bertoin:2008} and \cite{Iksanov+Mallein:2022}. The results obtained in the cited articles apply both in the heavy-tailed scenario considered here and in a light-tailed scenario.

\subsection{Our assumptions on the distribution of $W$}\label{sect:assump}

Our main result, Theorem \ref{main}, will be stated under certain assumptions on the distribution of $(|\log W|, |\log (1-W)|)$. We think it is instructive to introduce the assumptions for an arbitrary random vector $(\xi, \eta)$ and then specialize for $(\xi, \eta)=(|\log W|, |\log (1-W)|)$.

Let $(\xi_k, \eta_k)_{k\geq 1}$ be independent copies of a random vector $(\xi, \eta)$ with positive arbitrary dependent components. Let $F$ and $G$ be the distribution functions of $\xi$ and $\eta$, respectively. 
Denote by $(S_k)_{k\geq 0}$ the zero-delayed standard random walk with increments $\xi_k$, that is, $S_0: = 0$ and $S_k:= \xi_1+ \ldots + \xi_k$ for $k \in \mathbb{N}$. Put
\begin{equation*}
T_k := S_{k-1} + \eta_k,\quad k\in\mn.
\end{equation*}
The random sequence $T = (T_k)_{k\geq 1}$ is known in the literature as a (globally) perturbed random walk, see \cite{Iksanov:2016} for a survey  and  \cite{Basraketal:2022, Bohdanskyietal:2024, Bohunetal:2022, Braganets+Iksanov:2023, jap2023} for more recent contributions. A connection with the occupancy scheme treated in the paper is justified by the fact that if $(P_k)_{k\geq 1}$ is given by \eqref{eq:stick}, then $(-\log P_k)_{k\geq 1}$ is a perturbed random walk with $(\xi,\eta)=(|\log W|, |\log(1-W)|)$.

Put $N(t) := \sum_{k\geq 1} \1_{\{T_k\leq t\}}$ and $V(t) := \me N(t)$ for $t\geq 0$. Plainly,
\begin{equation}\label{eq:V}
V(t)= \int_{[0,\,t]}G(t-y) {\rm d}U(y), \quad t\geq 0,
\end{equation}
where, for $t\geq 0$, $U(t) = \sum_{i\geq 0} \mmp\{S_i \leq t\}$ is the renewal function.

Assume that
\begin{equation}\label{eq:standing assumption}
\mmp\{\xi>t\}\sim ct^{-\alpha},\quad t\to\infty
\end{equation}
for some $c>0$ and $\alpha\in (0,1)$. A standard result of renewal theory states that the latter limit relation is equivalent to $$U(t)~\sim~ Ct^\alpha,\quad t\to\infty,$$ where $C=C_\alpha:=(c\Gamma(1+\alpha)\Gamma(1-\alpha))^{-1}$ and $\Gamma$ is the Euler gamma-function. To prove this, one uses the equality $\int_{[0,\,\infty)}\eee^{-st}{\rm d}U(t)=(1-\varphi(s))^{-1}$ for $s>0$, where $\varphi(s):=\me \eee^{-s\xi}$ for $s\geq 0$, and Karamata's Tauberian theorem (Theorem 1.7.1 on p.~37 in \cite{bingham87}
). Since $\int_{[0,\,\infty)}\eee^{-st}{\rm d}V(t)=\me \eee^{-s\eta}(1-\me \eee^{-s\xi})^{-1}$ for $s>0$, the same reasoning enables us to conclude that \eqref{eq:standing assumption} is equivalent to $V(t)\sim Ct^\alpha$ as $t\to\infty$. It will become clear soon that the first-order behavior of $V$ is not sufficient for the purposes of the present work. A two-term expansion of $V$ is needed. It will be shown in Section \ref{sect:discussion} that the following two assumptions ensure the required expansion of $V$. 

\noindent {\sc Assumption $A$:} $U(t)=Ct^\alpha+O(t^\rho)$ for some $\rho\in [0,\alpha)$.

\noindent {\sc Assumption $B$:} Either ${\lim\sup}_{t\to\infty}(1-G(t))/(1-F(t))=m_0\in [0,\infty)$ or $m_0=\infty$ and that ${\lim\sup}_{t\to\infty} t^\theta (1-G(t))=m_1\in [0,\infty)$ necessarily for some $\theta\in (0,\alpha)$.

It seems to be a non-trivial problem to provide necessary and sufficient conditions for Assumption $A$ formulated in terms of the distribution of $\xi$. In the case where the distribution of $\xi$ is absolutely continuous (with respect to Lebesgue measure) sufficient conditions in terms of the densities considered as functions of a complex argument can be found in \cite{Wong:1976}. In Section \ref{sect:discussion} we give two sets of sufficient conditions, each of a different flavor.

Observe that when applied to the vector $(\xi,\eta)=(|\log W|, |\log (1-W)|)$ Assumptions $A$ and $B$ regulate the behavior of $W$ near $0$ and near $1$, respectively. If (a) $\me \eee^{-s|\log W|}=\exp (-c\Gamma(1-\alpha)s^\alpha)$ for $s\geq 0$, that is, the distribution of $|\log W|$ is $\alpha$-stable or (b) $\me \eee^{-s|\log W|}=(1+(c/\kappa)\Gamma(1-\alpha)s^\alpha)^{-\kappa}$ for $s\geq 0$ and some $\kappa>0$, then Assumption $A$ holds with $\rho=0$ and Assumption $B$ holds with $m_0=0$. This statement will be proved in Section \ref{sect:discussion}.

\section{Main result}\label{sect:mainresult}

To formulate our main result, we recall the definition of an {\it inverse stable subordinator}.
Let $\mathcal{Z}_\alpha=(\mathcal{Z}_\alpha(u))_{u\geq 0}$ be an $\alpha$-stable subordinator with
$-\log \me \eee^{-s\mathcal{Z}_\alpha(1)}=\Gamma(1-\alpha)s^\alpha$ for $s\geq 0$. The process $\mathcal{Z}_\alpha^\leftarrow=(\mathcal{Z}_\alpha^\leftarrow(u))_{u\geq 0}$ defined by $$\mathcal{Z}_\alpha^\leftarrow(u):=\inf\{v\geq 0: \mathcal{Z}_\alpha(v)>u\},\quad u\geq 0$$ is called {\it inverse $\alpha$-stable subordinator.}

We write $\overset{{\rm f.d.d.}}{\longrightarrow} $ to denote weak convergence of finite-dimensional distributions. Also, $\lfloor x\rfloor$ denotes the integer part of $x\in\mr$.
\begin{theorem}\label{main}
Under Assumptions $A$ and $B$ imposed on $(\xi, \eta)=(|\log W|, |\log (1-W)|)$, let $(j_n)_{n\geq 1}$ be a sequence of positive numbers satisfying $\lim_{n\to\infty}j_n=\infty$ and
\begin{equation*}
j_n = o\left((\log n)^{\min \left(1/3,(\alpha-\beta)/(\alpha-\beta+1)\right)}\right),\quad n\to\infty,
\end{equation*}
where $\beta=\rho$ if $m_0<\infty$ and $\beta=\max (\rho, \alpha-\theta)$ if $m_0=\infty$ and $m_1<\infty$. Then
$$\left(\frac{c j_n ^\alpha }{\rho_{\lfloor j_n u \rfloor - 1}(\log n)^{\alpha \lfloor j_n u \rfloor}} K_n(\lfloor j_n u \rfloor )
\right)_{u>0}~\overset{{\rm f.d.d.}}{\longrightarrow}~\left(\int_0^\infty \eee^{-\alpha u y} {\rm d} \mathcal{Z}_\alpha^\leftarrow(y) \right)_{u>0},\quad n\to\infty,
$$
where
\begin{equation}\label{def:fi}
\rho_i:= \frac{(C\Gamma(\alpha + 1))^i}{\Gamma(\alpha i + 1)}, \quad i\in \mn_0:=\{0,1,\ldots\},
\end{equation}
and $\Gamma$ is the Euler gamma-function.
\end{theorem}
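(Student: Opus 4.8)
The plan is to reduce $K_n(j)$ to a purely environmental quantity, peel off the first level of the tree, and then recognise the limit as a pathwise functional of the scaling limit of the first‑level renewal structure.

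\medskip\noindent\emph{Step 1: discarding the allocation of balls.} Conditionally on the environment $(P(v))_{v\in\mathbb V}$, the occupancy indicators $(\1_{\{v\ {\rm occupied}\}})_{|v|=j}$ are those of a multinomial allocation with random cell probabilities $(P(v))_{|v|=j}$, hence pairwise negatively correlated, so $\mathrm{Var}\big(K_n(j)\mid(P(v))\big)\le\me[K_n(j)\mid(P(v))]=\sum_{|v|=j}\big(1-(1-P(v))^n\big)$. Writing $\mathcal N_j(s):=\#\{|v|=j:-\log P(v)\le s\}$ and $v_j(s):=\me\mathcal N_j(s)$, I would use that $\big|1-(1-p)^n-\1_{\{p\ge1/n\}}\big|$ is, up to exponentially small terms, supported on $\{|{-\log p}-\log n|\le{\rm const}\}$, together with $v_j(\log n+{\rm const})-v_j(\log n)=O\big(j(\log n)^{-1}v_j(\log n)\big)$ and the conditional standard–deviation bound $\sqrt{v_j(\log n)}$, to conclude that, in the stated range of $j_n$ (where in particular $v_j(\log n)\to\infty$),
\[
\frac{cj_n^\alpha}{\rho_{\lfloor j_nu\rfloor-1}(\log n)^{\alpha\lfloor j_nu\rfloor}}\Big(K_n(\lfloor j_nu\rfloor)-\mathcal N_{\lfloor j_nu\rfloor}(\log n)\Big)\ \to\ 0\quad\text{in probability as }n\to\infty ,
\]
the normalising constant being of order $v_j(\log n)^{-1}$.

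\medskip\noindent\emph{Step 2: peeling the first level.} The branching structure gives $\mathcal N_j(s)=\sum_{k\ge1}\1_{\{T_k\le s\}}\,\mathcal N^{(k)}_{j-1}(s-T_k)$, where $(T_k)_{k\ge1}$ is the perturbed random walk associated with $(|\log W|,|\log(1-W)|)$ and the $\mathcal N^{(k)}_{j-1}$ are independent copies of $\mathcal N_{j-1}$, independent of $(T_k)$. I would replace each subtree count by its mean: since $\me\big(\mathcal N_j(s)-\sum_k\1_{\{T_k\le s\}}v_{j-1}(s-T_k)\big)^2=\int_{[0,s]}\mathrm{Var}\big(\mathcal N_{j-1}(s-y)\big)\,{\rm d}V(y)$, and since $v_{j-1}(r)=\rho_{j-1}r^{\alpha(j-1)}(1+o(1))$ uniformly for $r\le\log n$, one obtains $\mathcal N_j(\log n)=\rho_{j-1}\int_{[0,\log n]}(\log n-y)^{\alpha(j-1)}\,{\rm d}N(y)+({\rm error})$, the error being negligible relative to $v_j(\log n)$. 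Both inputs rest on the two‑term expansion $V(s)=Cs^\alpha+O(s^\beta)$ that Section~\ref{sect:discussion} derives from Assumptions~$A$ and~$B$: iterating it along the recursion $v_j(s)=\int_{[0,s]}v_{j-1}(s-y)\,{\rm d}V(y)$ yields $v_j(s)=\rho_js^{\alpha j}+O\big(\rho_j\,j^{1+\alpha-\beta}s^{\alpha j-(\alpha-\beta)}\big)$, whose relative error vanishes precisely when $j_n=o\big((\log n)^{(\alpha-\beta)/(\alpha-\beta+1)}\big)$, while a uniform estimate $\mathrm{Var}(\mathcal N_{j-1}(r))\le C_{j-1}\big(v_{j-1}(r)+v_{j-1}(r)^2\big)$ with $C_{j-1}$ polynomial in $j$ makes $\int_{[0,\log n]}\mathrm{Var}(\mathcal N_{j-1}(\log n-y))\,{\rm d}V(y)=o\big(v_j(\log n)^2\big)$ precisely when $j_n=o\big((\log n)^{1/3}\big)$.

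\medskip\noindent\emph{Step 3: the scaling limit.} After Steps~1--2 it suffices to identify, for $j=\lfloor j_nu\rfloor$, the limit of
\[
\frac{cj_n^\alpha}{(\log n)^{\alpha j}}\int_{[0,\log n]}(\log n-y)^{\alpha(j-1)}\,{\rm d}N(y)=\int_{[0,j_n]}\Big(1-\frac{w}{j_n}\Big)^{\alpha(\lfloor j_nu\rfloor-1)}\,{\rm d}\Big(\frac{c}{t_n^\alpha}\,N(w\,t_n)\Big),\qquad t_n:=\frac{\log n}{j_n}\to\infty ,
\]
obtained by the substitution $y=w\,t_n$. I would then: (i) replace the perturbed‑walk count $N$ by the renewal count $N^S(\,\cdot\,):=\sum_{k\ge0}\1_{\{S_k\le\,\cdot\,\}}$, legitimate after division by $t_n^\alpha$ since $\me|N(s)-N^S(s)|=|V(s)-U(s)|=O(s^\beta)=o(s^\alpha)$ by Assumption~$B$; (ii) use the invariance principle $\big(\tfrac{c}{t^\alpha}N^S(wt)\big)_{w\ge0}\Rightarrow(\mathcal Z_\alpha^\leftarrow(w))_{w\ge0}$ in $D[0,\infty)$, coming from $\big(S_{\lfloor n\,\cdot\,\rfloor}/(cn)^{1/\alpha}\big)\Rightarrow\mathcal Z_\alpha$ and continuity of inversion (the norming being consistent with $U(t)\sim Ct^\alpha$); and (iii) split the integral at a level $L$: on $[0,L]$ the integrand converges to $\eee^{-\alpha uw}$ uniformly and stays of total variation $\le1$, so the truncated integral converges weakly to $\int_{[0,L]}\eee^{-\alpha uw}\,{\rm d}\mathcal Z_\alpha^\leftarrow(w)$, while on $(L,j_n]$ one bounds $(1-w/j_n)^{\alpha(\lfloor j_nu\rfloor-1)}\le\eee^{2\alpha}\eee^{-\alpha uw}$ and $\me\int_{(L,\infty)}\eee^{-\alpha uw}\,{\rm d}\big(\tfrac{c}{t_n^\alpha}N^S(wt_n)\big)=\tfrac{c}{t_n^\alpha}\int_{(L,\infty)}\eee^{-\alpha uw}\,{\rm d}U(wt_n)\to cC\alpha\int_{(L,\infty)}\eee^{-\alpha uw}w^{\alpha-1}\,{\rm d}w$, which is uniformly (in $n$) small for large $L$. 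Letting $L\to\infty$ produces the limit $\int_0^\infty\eee^{-\alpha uy}\,{\rm d}\mathcal Z_\alpha^\leftarrow(y)$; the finite‑dimensional statement in $u$ is immediate, all pre‑limit quantities being driven by the single process $\tfrac{c}{t_n^\alpha}N^S(\,\cdot\,t_n)$, resp.\ by $\mathcal Z_\alpha^\leftarrow$.

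\medskip\noindent\emph{Where the difficulty lies.} I expect Step~2 to be the crux: one must control, with explicit polynomial‑in‑$j$ constants and uniformly over the whole admissible range $j=j_n$, both the two‑term expansion of the iterated convolution of $V$ and the variance of $\mathcal N_j$ about its first‑level conditional mean. The two growth restrictions on $j_n$ in the statement are exactly the thresholds past which, respectively, the accumulated relative error of the convolution expansion and the relative $L^2$‑fluctuation cease to be negligible, so pinning down these thresholds — rather than any individual estimate — is the technical heart of the proof. A secondary issue, handled in Step~3 by the rescaling $z\mapsto w/j_n$ together with the truncation at $L$, is that the inverse stable subordinator (arising from $n\to\infty$) and the exponential integrand (arising from $j_n\to\infty$) come from two limits that must be performed simultaneously.
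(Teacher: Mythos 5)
Your plan is essentially the paper's proof: the same three-way decomposition ($K_n$ versus $\psi_j(n)=N_j(\log n)$, then $N_j$ versus its first-generation conditional mean $\sum_{r\ge1} V_{j-1}(\log n-T_r)\1_{\{T_r\le\log n\}}$ controlled through the variance recursion with the $o(t^{1/3})$ threshold, then the limit of $\int_{[0,t]}V_{j-1}(t-y)\,{\rm d}N(y)$ via the invariance principle for the counting process towards $\mathcal{Z}_\alpha^\leftarrow$ plus a truncation in the rescaled variable), with the iterated two-term expansion of $V$ producing exactly the $o\big(t^{(\alpha-\beta)/(\alpha-\beta+1)}\big)$ threshold — this is the content of Theorems \ref{theorem2}--\ref{theorem3}, Lemmas \ref{lem:estimate}--\ref{lem:import} and Lemma \ref{lem:aux}, and your identification of the two growth restrictions with the two error sources matches the paper's computations. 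The only inaccuracy is the claim that $V_{j-1}(r)\sim\rho_{j-1}r^{\alpha(j-1)}$ holds uniformly for all $r\le\log n$ (it holds only for $r$ of order $\log n$, cf.\ Corollary \ref{cor:sim}, and fails badly for small $r$ when $j\to\infty$); this is harmless for your argument because the replacement of $V_{j-1}$ by the power law can be carried out in $L^1$ using the full binomial-type bound \eqref{ineq:first}, which is precisely how the paper treats the range where $\log n-y$ is small (Lemma \ref{lem:aux}), so it is a repairable slip rather than a genuine gap.
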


Now we discuss the structure of the limit process in Theorem \ref{main}. It can be seen from the proof that the integrator $\mathcal{Z}_\alpha^\leftarrow$ describes the fluctuations of the number of occupied boxes in the first level, properly normalized. The process $\mathcal{Z}_\alpha^\leftarrow$ is a.s.\ continuous singular with respect to Lebesgue measure. The integrand $y\mapsto \eee^{-\alpha u y}$ which is formed by the renewal structure of the underlying tree makes the process $\big(\int_0^\infty \eee^{-\alpha u y} {\rm d} \mathcal{Z}_\alpha^\leftarrow(y)\big)_{u>0}$ a.s.\ infinitely differentiable. Thus, when passing from the first level to intermediate levels the fluctuations of the number of occupied boxes smooth out.

Since $\mathcal{Z}_\alpha^\leftarrow(\mathcal{Z}_\alpha(y))=y$ a.s., we conclude that $$\Big(\int_0^\infty \eee^{-\alpha u y} {\rm d} \mathcal{Z}_\alpha^\leftarrow(y)\Big)_{u>0}=\Big(\int_0^\infty \eee^{-\alpha u \mathcal{Z}_\alpha(y)} {\rm d}y\Big)_{u>0}\quad\text{a.s.}$$ Further, by the scaling property of $\mathcal{Z}_\alpha$, with $u>0$ fixed, $$\int_0^\infty \eee^{-\alpha u \mathcal{Z}_\alpha(y)} {\rm d}y\overset{{\rm d}}{=} (\alpha u)^{-\alpha}\int_0^\infty \eee^{-\mathcal{Z}_\alpha(y)}{\rm d}y=: (\alpha u)^{-\alpha} I_\alpha,$$ where $\overset{{\rm d}}{=}$ denotes equality of distributions. The latter integral is known in the literature as an exponential functional of the subordinator $\mathcal{Z}_\alpha$. According to Proposition 3.3 in \cite{Carmona+Petit+Yor:1997}, $$\me (I_\alpha)^n 
=\frac{(n!)^{1-\alpha}}{(\Gamma(1-\alpha))^n},\quad n\in\mn$$ and furthermore $\me \exp(sI_\alpha 
)<\infty$ for all $s>0$. It can be shown (see Proposition 3.4(iv) in \cite{Carmona+Petit+Yor:1997}) that the moment formula implies that $\log I_\alpha + \log \Gamma(1-\alpha) \overset{{\rm d}}{=} X(1-\alpha)$, where $(X(u))_{u\geq 0}$ is a L\'{e}vy process with $X(1)\overset{{\rm d}}{=} \log \mathcal{E}$, and $\mathcal{E}$ is a random variable with the exponential distribution of unit mean. A consequence of this observation is that the distribution of $I_\alpha$ is absolutely continuous with respect to Lebesgue measure.

Next, we intend to set a non-rigorous link between Theorem \ref{main} and a limit theorem for $K_n(j)$ with fixed $j$. Assume that $\mmp\{|\log W|>x\}\sim x^{-\alpha}\ell(x)$ as $x\to\infty$ for some $\alpha\in (0,1)$ and some $\ell$ slowly varying at $\infty$. A specialization of a functional limit theorem given in Theorem 7 of \cite{Braganets+Iksanov:2023} reads $$\Big(\frac{(\ell(\log n))^j K_n(j)}{(\log n)^{\alpha j}}\Big)_{j\geq 1}~\overset{{\rm d}}{\longrightarrow}~\Big(\int_{[0,\,1]}(1-y)^{\alpha(j-1)}{\rm d} \mathcal{Z}_\alpha^\leftarrow(y)\Big)_{j\geq 1},\quad n\to\infty,$$ where $\overset{{\rm d}}{\longrightarrow}$ denotes convergence in distribution. Observe that $(\mathcal{Z}_\alpha^\leftarrow(y/j))_{y\geq 0}$ has the same distribution as $(j^{-\alpha}\mathcal{Z}_\alpha^\leftarrow(y))_{y\geq 0}$ and that, with $y>0$ fixed, $\lim_{j\to\infty}(1-y/j)^{\alpha (j-1)}=\eee^{-\alpha y}$. Hence, $$j^\alpha \int_{[0,\,1]}(1-y)^{\alpha(j-1)}{\rm d} \mathcal{Z}_\alpha^\leftarrow(y)=j^\alpha \int_{[0,\,j]}(1-y/j)^{\alpha(j-1)}{\rm d} \mathcal{Z}_\alpha^\leftarrow(y/j)\overset{{\rm d}}{=} \int_{[0,\,j]}(1-y/j)^{\alpha(j-1)}{\rm d} \mathcal{Z}_\alpha^\leftarrow(y).$$ 
It is reasonable to expect that the distributional limit of the last integral as $j\to\infty$ is $\int_{[0,\,\infty)}\eee^{-\alpha y}{\rm d} \mathcal{Z}_\alpha^\leftarrow(y)$. This is in line with Theorem \ref{main}.

The remainder of the article is structured as follows. As has already been announced, we discuss in Section \ref{sect:discussion} Assumptions $A$ and $B$ and their consequences. Section \ref{sect:perturbed} contains auxiliary results on a general branching process generated by a perturbed random walk. The proof of Theorem \ref{main} is given in Section \ref{sect:proofs}.

\section{Discussion of Assumptions $A$ and $B$}\label{sect:discussion}

We start by showing that Assumptions $A$ and $B$ secure a two-term expansion of $V$.

\begin{lemma}\label{lem:estim1}
Assume that Assumptions $A$ and $B$ hold. Then there exists $D>0$ such that
\begin{equation}\label{eq:estim}
|V(t)-Ct^\alpha|\leq Dt^\beta,\quad t\geq 0,
\end{equation}
where $\beta=\rho$ if $m_0<\infty$, $\beta=\max(\rho,\alpha-\theta)$ if $m_0=\infty$ and $m_1<\infty$; $C>0$, $\alpha\in (0,1)$ and $\rho\in [0,\alpha)$ are as defined in Assumption $A$; $m_0$, $m_1$ and $\theta\in (0,\alpha)$ are as defined in Assumption $B$.
\end{lemma}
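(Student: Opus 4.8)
The plan is to split the analysis according to the two cases in Assumption $B$, and in each case to estimate the convolution integral in \eqref{eq:V} directly, using Assumption $A$ to control $U$ and the relevant tail bound on $G$ to control the integrand. I would first record the consequence of Assumption $A$ in the form that is actually convenient: since $U(t)=Ct^\alpha+O(t^\rho)$, there is a constant $c_1>0$ with $|U(t)-Ct^\alpha|\le c_1(1+t^\rho)$ for all $t\ge 0$ (absorbing the bounded behaviour near the origin into the constant), and likewise $U(t)\le c_2(1+t^\alpha)$. The target quantity is $V(t)-Ct^\alpha$, so a natural first move is to write $Ct^\alpha$ as a convolution of the same type: integrating by parts (or using $\int_{[0,t]}\alpha C(t-y)^{\alpha-1}\,{\rm d}y=Ct^\alpha$ is not quite it because of the $G$ weight), the cleaner identity is $V(t)=\int_{[0,t]}G(t-y)\,{\rm d}U(y)$ and, since $\int_{[0,t]}\,{\rm d}(Cy^\alpha)=Ct^\alpha$, one has
\[
V(t)-Ct^\alpha=\int_{[0,t]}\bigl(G(t-y)-1\bigr)\,{\rm d}U(y)+\int_{[0,t]}G(t-y)\,{\rm d}\bigl(U(y)-Cy^\alpha\bigr).
\]
The first term is $-\int_{[0,t]}(1-G(t-y))\,{\rm d}U(y)$, which is nonpositive and should be bounded by $D_1 t^\beta$; the second term should be bounded by $D_2 t^{\max(\rho,\ldots)}$ via Assumption $A$ and an integration by parts that transfers the differential onto $G$.

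For the first term, in the subcase $\limsup_{t\to\infty}(1-G(t))/(1-F(t))=m_0<\infty$: then $1-G(t)\le c_3(1-F(t))$ for large $t$, and since $\mmp\{\xi>t\}\sim ct^{-\alpha}$ the tail $1-F$ is regularly varying of index $-\alpha$; combined with $U(t)\le c_2(1+t^\alpha)$ and the elementary renewal-theoretic fact that $\int_{[0,t]}(1-F(t-y))\,{\rm d}U(y)=U(t)-V_F(t)$ where $V_F$ is the renewal function's own "perturbed" analogue — actually more directly $\int_{[0,t]}(1-F(t-y))\,{\rm d}U(y)\le \int_{[0,t]}(1-F(t-y))\,c_2\,{\rm d}(c_4 y^\alpha)$ is $O(t^\alpha\cdot\text{(tail average)})$, which needs care. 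A cleaner route: split the integral at $t/2$. On $y\in[0,t/2]$, $1-G(t-y)\le 1-G(t/2)$ times ${\rm d}U$ integrated, giving $O(t^{-\alpha})\cdot O(t^\alpha)=O(1)$, hence $O(t^\rho)$ since $\rho\ge 0$; wait, this is $O(1)$, not obviously $O(t^\rho)$ — but $t^\rho\ge 1$ for $t\ge 1$, so it is absorbed. On $y\in[t/2,t]$, $1-G(t-y)\le 1$ and ${\rm d}U$ over $[t/2,t]$ contributes $U(t)-U(t/2)=O(t^\alpha)$, which is too big; instead bound $1-G(t-y)$ using its own tail and ${\rm d}U(y)\le c\,y^{\alpha-1}\,{\rm d}y$ near the upper end... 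The honest statement is that this term requires the standard "direct Riemann integrable"/Sgibnev-type convolution lemma for regularly varying tails, which gives $\int_{[0,t]}(1-G(t-y))\,{\rm d}U(y)=O(t^\alpha(1-G(t))\vee\text{(integral of }1-G)\,)$. In the $m_0<\infty$ case this is $O(t^\alpha(1-F(t)))=O(t^\alpha\cdot t^{-\alpha}\ell(t))=O(\ell(t))=o(t^\varepsilon)$ for every $\varepsilon>0$, hence $O(t^\rho)$ if $\rho>0$; the boundary case $\rho=0$ needs the slightly sharper claim that the bound is actually $O(1)$, which follows because in the stable/gamma examples $1-F$ is genuinely $\sim ct^{-\alpha}$ with no slowly varying correction, making $\int^t(1-G(s))\,{\rm d}s=O(t^{1-\alpha})$ and the convolution $O(t^{1-\alpha}\cdot t^{\alpha-1})=O(1)$. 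For the subcase $m_0=\infty$, $m_1<\infty$: here $1-G(t)\le c_5 t^{-\theta}$, and one splits at $t/2$ to get the $[0,t/2]$ piece $\le (1-G(t/2))U(t)=O(t^{-\theta})O(t^\alpha)=O(t^{\alpha-\theta})$ and the $[t/2,t]$ piece $\le \int_{[t/2,t]}(1-G(t-y))\,{\rm d}U(y)$ which by a change of variable $s=t-y$ and ${\rm d}U(t-s)\le c_6 t^{\alpha-1}\,{\rm d}s$ (monotonicity of $y\mapsto y^{\alpha-1}$ for the derivative of the leading term, plus the $O(t^\rho)$ correction handled separately) is $O(t^{\alpha-1})\int_0^{t/2}s^{-\theta}\,{\rm d}s=O(t^{\alpha-1}\cdot t^{1-\theta})=O(t^{\alpha-\theta})$. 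This explains the appearance of $\alpha-\theta$ in $\beta=\max(\rho,\alpha-\theta)$.

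For the second term $\int_{[0,t]}G(t-y)\,{\rm d}(U(y)-Cy^\alpha)$, I would integrate by parts: it equals $\bigl[G(t-y)(U(y)-Cy^\alpha)\bigr]_{y=0}^{y=t}+\int_{[0,t]}(U(y)-Cy^\alpha)\,{\rm d}_y G(t-y)$. The boundary term at $y=t$ is $G(0^+)(U(t)-Ct^\alpha)=O(t^\rho)$ by Assumption $A$ (and is $0$ if $G(0^+)=0$), and at $y=0$ it is $G(t)\cdot 0=0$; the remaining integral is $\le \sup_{0\le y\le t}|U(y)-Cy^\alpha|\cdot\int_{[0,t]}|{\rm d}_y G(t-y)|\le c_1(1+t^\rho)\cdot 1=O(t^\rho)$, using that $y\mapsto G(t-y)$ has total variation $\le 1$. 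Combining, $|V(t)-Ct^\alpha|\le D(1+t^\beta)$ with $\beta=\rho$ in the first subcase of Assumption $B$ and $\beta=\max(\rho,\alpha-\theta)$ in the second, and since we may enlarge $D$ to absorb the additive constant on $t\in[0,1]$ (where $V$ is bounded), we get \eqref{eq:estim} for all $t\ge 0$.

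The main obstacle I anticipate is the boundary case $\rho=0$ in the first subcase of Assumption $B$: there one must show the convolution term is genuinely $O(1)$ and not merely $o(t^\varepsilon)$, which is false for a general tail $1-F(t)=t^{-\alpha}\ell(t)$ with $\ell\to\infty$ but is true under the exact asymptotic \eqref{eq:standing assumption} with $\ell\to c$, and requires quantifying how fast $1-F$ and $1-G$ approach their asymptotics — essentially one needs a quantitative form of the de Haan/Karamata estimates, or, more simply, to observe that $1-F$ being bounded by $c' t^{-\alpha}$ for all $t\ge t_0$ suffices to run the split-at-$t/2$ argument to an $O(1)$ conclusion. A secondary technical point is the integration-by-parts manipulations with the Lebesgue–Stieltjes measure ${\rm d}U$, which has an atom at $0$ of mass $U(0)=1$; I would handle this by writing $U=\delta_0+\widetilde U$ with $\widetilde U(t)=\sum_{i\ge1}\mmp\{S_i\le t\}$ continuous-ish, treating the atom's contribution $G(t)=1-(1-G(t))$ separately (it contributes exactly $1-(1-G(t))$ to $V(t)$, and $1-G(t)=O(t^\beta)$ trivially since $1-G(t)\to0$, in fact $O(t^{-\theta})$ or $O(t^{-\alpha})$).
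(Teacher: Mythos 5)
Your overall plan (treat the two cases of Assumption $B$ separately and reduce everything to estimating $\int_{[0,t]}(1-G(t-y))\,{\rm d}U(y)$) is the right one, and it is essentially what the paper does, but two things go wrong. First, a repairable slip: your displayed decomposition is not an identity — expanding it gives $2V(t)-U(t)-\int_{[0,t]}G(t-y)\,{\rm d}(Cy^\alpha)$, not $V(t)-Ct^\alpha$. The clean identity to use is simply $V(t)=U(t)-\int_{[0,t]}(1-G(t-y))\,{\rm d}U(y)$, after which Assumption $A$ disposes of $U(t)-Ct^\alpha$ and only the correction integral remains; your second term (and its integration by parts) is then unnecessary.

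The genuine gap is in the case $m_0<\infty$: you never establish that $\int_{[0,t]}(1-G(t-y))\,{\rm d}U(y)=O(1)$, which is exactly what is needed when $\rho=0$ (and is what the lemma's $\beta=\rho$ requires). The device you are circling around but never land on is the exact renewal identity $\int_{[0,t]}(1-F(t-y))\,{\rm d}U(y)=1$ (the renewal equation for $U$, equivalently $\mmp\{\inf\{k\ge0:S_k>t\}<\infty\}=1$): combined with $1-G\le m_2(1-F)$ on $[t_0,\infty)$ it gives $\int_{[0,t-t_0]}(1-G(t-y))\,{\rm d}U(y)\le m_2$, and the remaining strip contributes $\int_{(t-t_0,t]}(1-G(t-y))\,{\rm d}U(y)\le U(t)-U(t-t_0)\le U(t_0)$ by subadditivity of $U$ — an $O(1)$ bound with no regular-variation input at all. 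Your substitutes do not close this: the split at $t/2$ using only $1-F(t)\le c't^{-\alpha}$ gives, for the piece with $y$ near $t$, at best $O(\log t)$ (integrating by parts and using subadditivity with tail exponent $\theta=\alpha$ produces $\int_{t_1}^t y^{-1}{\rm d}y$), not $O(1)$; and the alternative bound ${\rm d}U(y)\le c\,y^{\alpha-1}{\rm d}y$ is a local renewal-density estimate that is not available under the standing assumptions (it is a strong-renewal-theorem type statement, false in general, e.g. when $U$ has atoms) — the same unjustified local bound also appears in your treatment of the case $m_0=\infty$, where the correct $O(t^{\alpha-\theta})$ conclusion should instead be reached by integration by parts together with subadditivity of $U$ and $U(t)\le C^\ast t^\alpha$. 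Finally, verifying the $\rho=0$ case only "in the stable/gamma examples" would not prove the lemma as stated.
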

\begin{proof}
Assuming that ${\lim\sup}_{t\to\infty}(1-G(t)/(1-F(t))=m_0\in [0,\infty)$ we shall show that
\begin{equation}\label{eq:inter2}
\int_{[0,\,t]}(1-G(t-y)){\rm d}U(y)=O(1),\quad t\to\infty.
\end{equation}
Indeed, given $m_2>m_0$ there exists $t_0>0$ such that $1-G(t)\leq m_2(1-F(t))$ whenever $t\geq t_0$. With this at hand, $$\int_{[0,\,t-t_0]}(1-G(t-y)){\rm d}U(y)\leq m_2 \int_{[0,\,t]}(1-F(t-y)){\rm d}U(y)=m_2.$$ The last equality is another form of writing the renewal equation for $U$. Alternatively, it is equivalent to $\mmp\{\inf\{k\geq 0: S_k>t\}<\infty\}=1$. By subadditivity of $U$ (see, for instance, Theorem 1.7 on p.~10 in \cite{Mitov+Omey:2014}), $$\int_{(t-t_0,\,t]}(1-G(t-y)){\rm d}U(y)\leq U(t)-U(t-t_0)\leq U(t_0),$$ and \eqref{eq:inter2} follows. Equality \eqref{eq:V} is equivalent to $$V(t)=U(t)-\int_{[0,\,t]}(1-G(t-y)){\rm d}U(y),\quad t\geq 0.$$ Hence, Assumption $A$ entails $V(t)=Ct^\alpha+O(t^\rho)$ as $t\to\infty$. Since $V(0)=0$, we can find $D>0$ such that $|V(t)-Ct^\alpha|\leq Dt^\rho$ for all $t\geq 0$.

Assume now that $m_0=\infty$ and that ${\lim\sup}_{t\to\infty} t^\theta (1-G(t))=m_1\in [0,\infty)$ for some $\theta\in (0,\alpha)$. We shall prove that
\begin{equation}\label{eq:inter3}
\int_{[0,\,t]}(1-G(t-y)){\rm d}U(y)=O(t^{\alpha-\theta}),\quad t\to\infty.
\end{equation}
Given $m_3>m_1$ and given $ C^\ast> C$ there exists $t_1$ such that $U(t)\leq C^\ast t^\alpha$ and $1-G(t)\leq m_3 t^{-\theta}$ whenever $t\geq t_1$. We already know from the previous paragraph that subadditivity of $U$ ensures $\int_{(t-t_1,\,t]}(1-G(t-y)){\rm d}U(y)=O(1)$ as $t\to\infty$. Further, we obtain with the help of integration by parts,
\begin{multline*}
\int_{[0,\,t-t_1]}(1-G(t-y)){\rm d}U(y)\leq m_3 \int_{[0,\,t-t_1]}(t-y)^{-\theta}{\rm d}U(y)= m_3
\int_{[t_1,\,t]}y^{-\theta}{\rm d}_y(U(t)-U(t-y))\\= m_3 \Big(  t^{-\theta}(U(t)-U(0))-t_1^{-\theta}(U(t)-U(t-t_1))+\theta \int_{t_1}^t (U(t)-U(t-y))y^{-\theta-1}{\rm d}y  \Big).
\end{multline*}
Here and elsewhere, we write ${\rm d}_y$ rather than ${\rm d}$ whenever there is an ambiguity, that is, a function under the differential depends on $y$ and some other variables. The first term is $O(t^{\alpha-\theta})$ and the second term is $O(1)$. Another appeal to subadditivity of $U$ yields $$\int_{t_1}^t (U(t)-U(t-y))y^{-\theta-1}{\rm d}y\leq \int_{t_1}^t U(y)y^{-\theta-1}{\rm d}y\leq C^\ast \int_{t_1}^t y^{\alpha-\theta-1}{\rm d}y=O(t^{\alpha-\theta}),$$ and \eqref{eq:inter3} follows. Mimicking the argument from the first part of the proof we arrive at \eqref{eq:estim} with $\beta=\max(\rho,\alpha-\theta)$.
\end{proof}

Let $(\mathcal{Z}_\alpha(u))_{u\geq 0}$ be an $\alpha$-stable subordinator as defined at the beginning of Section \ref{sect:mainresult}. Now we discuss two sets of sufficient conditions for Assumption A. The first of these was outlined by one of the referees. Observe that $$\Phi_\alpha(v):=\me \eee^{{\rm i}v\mathcal{Z}_\alpha(1)}=\exp(-\Gamma(1-\alpha)|v|^\alpha(\cos(\pi\alpha/2)-{\rm i}\sin(\pi\alpha/2){\rm sgn}\,v),\quad v\in\mr.$$
\begin{lemma}\label{lem:suff}
Assume that there exist constants $a_1, a_2>0$ and $r\in (\alpha, 1]$ such that, for all $v\in\mr$ satisfying $|v|\leq a_1$,
\begin{equation}\label{eq:ineq}
|\me \eee^{{\rm i}v\xi}-\Phi_\alpha(v)|\leq a_2 |v|^r.
\end{equation}
Then Assumption A holds with $\rho=0$ if $r>2\alpha$, $\rho=\delta$ for any $\delta>0$ if $r=2\alpha$ and $\rho=\lambda(2\alpha-r)$ for any $\lambda>1$ such that $\lambda (2\alpha-r)<\alpha$ if $r<2\alpha$.
\end{lemma}
\begin{proof}
Observe that $\int_0^\infty \mmp\{\mathcal{Z}_\alpha(cu)\leq t\}{\rm d}u=Ct^\alpha$ for $t\geq 0$ and that $$0\leq \sum_{n\geq 0}\mmp\{\mathcal{Z}_\alpha(cn)\leq t\}-\int_0^\infty \mmp\{\mathcal{Z}_\alpha(cu)\leq t\}{\rm d}u\leq 1,\quad t\geq 0.$$ Hence, it is enough to prove that
\begin{equation}\label{eq:aux}
\Big|\sum_{n\geq 0}\mmp\{S_n\leq t\}-\sum_{n\geq 0}\mmp\{\mathcal{Z}_\alpha(cn)\leq t\}\Big|=O(t^\rho),\quad t\to\infty.
\end{equation}
By Proposition 1 in \cite{Paulauskas:1974},
\begin{equation}\label{eq:paul}
\sup_{x\geq 0}\big|\mmp\{S_n\leq n^{1/\alpha}x\}-\mmp\{\mathcal{Z}_\alpha(c)\leq x\}\big|=\sup_{t\geq 0}\big|\mmp\{S_n\leq t\}-\mmp\{\mathcal{Z}_\alpha(cn)\leq t\}\big|\leq Bn^{-\alpha^{-1}(r-\alpha)}
\end{equation}
for each $n\in\mn$ and a constant $B>0$ which does not depend on $n$.

\noindent {\sc Case $r\in (2\alpha, 1)$} (necessarily $\alpha<1/2$). According to \eqref{eq:paul}, relation \eqref{eq:aux} holds with $\rho=0$. 

\noindent {\sc Case $r\in (\alpha, 2\alpha)$}, in which $0<\alpha^{-1}(r-\alpha)<1$. Fix any $\lambda>1$ such that $\lambda(2\alpha-r)<\alpha$. Using \eqref{eq:paul} we infer $$\Big|\sum_{n=0}^{\lfloor t^{\lambda\alpha}\rfloor} \mmp\{S_n\leq t\}-\sum_{n=0}^{\lfloor t^{\lambda\alpha} \rfloor}\mmp\{\mathcal{Z}_\alpha(cn)\leq t\}\Big|\leq B\sum_{n=1}^{\lfloor t^{\lambda\alpha}\rfloor}n^{-\alpha^{-1}(r-\alpha)}=O(t^{\lambda(2\alpha-r)}),\quad t\to\infty.$$ We claim that
\begin{equation}\label{eq:inter4}
\sum_{n\geq \lfloor t^{\lambda\alpha}\rfloor+1}\mmp\{S_n\leq t\}=o(1)\quad\text{and}\quad \sum_{n\geq \lfloor t^{\lambda\alpha}\rfloor+1}\mmp\{\mathcal{Z}_\alpha(cn)\leq t\}=o(1),\quad t\to\infty.
\end{equation}
Indeed, by Markov's inequality, for each $u>0$, $$\sum_{n\geq \lfloor t^{\lambda\alpha}\rfloor+1}\mmp\{S_n\leq t\}\leq \eee^{ut}\sum_{n\geq \lfloor t^{\lambda\alpha}\rfloor+1}(\varphi(u))^n\leq \frac{\eee^{ut+\log \varphi(u)t^{\lambda\alpha}}}{1-\varphi(u)}.$$ Put $u=1/t$. Then $(1-\varphi(1/t))^{-1}\sim (c\Gamma(1-\alpha))^{-1}t^\alpha$ as $t\to+\infty$. Since $-t^{\lambda\alpha}\log \varphi(1/t)\sim c\Gamma(1-\alpha)t^{(\lambda-1)\alpha}$ as $t\to+\infty$, the numerator decays to $0$ superexponentially fast. This proves the first limit relation in \eqref{eq:inter4}. The argument for the second limit relation is analogous.

\noindent {\sc Case $r=2\alpha<1$}. In the proof for the previous case, pick any $\lambda>1$. Then $$\Big|\sum_{n=0}^{\lfloor t^{\lambda\alpha}\rfloor} \mmp\{S_n\leq t\}-\sum_{n=0}^{\lfloor t^{\lambda\alpha} \rfloor}\mmp\{\mathcal{Z}_\alpha(cn)\leq t\}\Big|=O(\log t),\quad t\to\infty.$$ Also, relations \eqref{eq:inter4} hold true in the present setting.
\end{proof}

We proceed by discussing a possible application of Lemma \ref{lem:suff}. Assume that $\xi\overset{{\rm d}}{=} \mathcal{Z}_\alpha(c)+\theta$ for a nonnegative random variable $\theta$ which is independent of $\mathcal{Z}_\alpha(c)$. If $\me\theta<\infty$, then inequality \eqref{eq:ineq} holds with $r=1$. Further, if $\alpha=3/4$, say, then Assumption A holds with $\rho=\lambda/2$ for any $\rho\in (1,3/2)$. If $\mmp\{\theta>t\}\sim {\rm const}\,t^{-\beta}$ as $t\to\infty$ for some $\beta\in (\alpha,1)$, then inequality \eqref{eq:ineq} holds with $r=\beta$. Further, if $\alpha=1/4$ and $\beta=3/4$, then Assumption A holds with $\rho=0$.

Next, 
we give other sufficient conditions for Assumption $A$ which works for a special class of the distributions of $\xi$. Namely, assume that the distribution of $\xi$ coincides with that of $\mathcal{Z}_\alpha(c) X^{1/\alpha}$, where $X$ is a positive random variable of mean one which is independent of $\mathcal{Z}_\alpha(c)$ for some $c>0$. Equivalently, this assumption means that the standard random walk $(S_n)_{n\geq 0}$ has the same distribution as $(\mathcal{Z}_\alpha(c\hat S_n))_{n\geq 0}$. Here, $\hat S_0:=0$, $\hat S_n:=X_1+\ldots+X_n$ for $n\in\mn$, $X_1$, $X_2,\ldots$ are independent copies of $X$, and $(\hat S_n)_{n\geq 0}$ is independent of $(\mathcal{Z}_\alpha(u))_{u\geq 0}$.

Put $\hat U(t):=\sum_{n\geq 0}\mmp\{\hat S_n\leq t\}$ for $t\geq 0$. Now we obtain a formula which connects $U$ and $\hat U$. To this end, we need the following property: for fixed $t>0$, $\mathcal{Z}_\alpha(c t)$ has the same distribution as $t^{1/\alpha} \mathcal{Z}_\alpha(c)$. As a consequence,
\begin{multline}\label{u_equation}
U(t)  = \sum_{n \geq 0} \mmp \{ S_n \leq t\} = \sum_{n \geq 0} \mmp \{ \mathcal{Z}_\alpha (c\hat S_n) \leq t\} 
=  \sum_{n \geq 0} 
\mmp  \{\hat S_n^{1/\alpha} \mathcal{Z}_\alpha (c) \leq t 
\}
\\= \sum_{n \geq 0} 
\mmp  \{ \hat S_n \leq  (\mathcal{Z}_\alpha(c))^{-\alpha}  t^\alpha 
\} 
= \me \hat U\left( (\mathcal{Z}_\alpha(c))^{-\alpha}  t^\alpha \right),\quad t\geq 0.
\end{multline}

Let $d>0$. Recall that the distribution of a positive random variable is called {\it $d$-arithmetic} if it is concentrated on the lattice $(nd)_{n\geq 1}$ and not concentrated on $(nd_1)_{n\geq 1}$ for any $d_1>d$. The distribution of $Y$ is called {\it nonarithmetic} if it is not $d$-arithmetic for all $d>0$.

Invoking \eqref{u_equation} enables us to provide sufficient conditions which ensure Assumption $A$.
\begin{lemma}\label{lem:assump}
Assume that $\me X^\vartheta<\infty$ for some $\vartheta\in (1,2]$. Then $$U(t)=Ct^\alpha+O(t^{\alpha(2-\theta)}),\quad t\to\infty,$$
that is, Assumption $A$ holds with $\rho=\alpha(2-\vartheta)$. Here, as before, $C=(c \Gamma(1+\alpha) \Gamma(1-\alpha))^{-1}$.
\end{lemma}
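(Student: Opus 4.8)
The plan is to build everything on the identity \eqref{u_equation}, which realises $U$ as the average of the ordinary renewal function $\hat U$ of the random walk $(\hat S_n)$ whose increments $X_k$ have mean one and a finite $\vartheta$-th moment with $\vartheta>1$. Thus the first task is a two-term expansion of $\hat U$, and the second is to push it through \eqref{u_equation} after the stable time change.

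\emph{Step 1: a uniform two-term bound for $\hat U$.} Let $F_X$ denote the distribution function of $X$. Since $\me X=1$, writing $g(s):=\hat U(s)-s$ and using the renewal equation $\hat U(s)=1+\int_{[0,s]}\hat U(s-y)\,{\rm d}F_X(y)$ together with $\int_0^\infty(1-F_X(x))\,{\rm d}x=\me X=1$, one checks that $g$ solves $g(s)=\int_{[0,s]}g(s-y)\,{\rm d}F_X(y)+z(s)$ with $z(s)=\me[(X-s)^+]=\int_s^\infty(1-F_X(x))\,{\rm d}x$. In particular $z$ is nonnegative and nonincreasing, $z(0)=1$, and the moment assumption gives $z(s)\le s^{1-\vartheta}\me X^\vartheta$ for $s>0$, as well as $\sum_{k\ge 0}z(k)\le 1+\me X^2<\infty$ when $\vartheta=2$. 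Solving the renewal equation yields $g(s)=\int_{[0,s]}z(s-y)\,{\rm d}\hat U(y)\ge 0$, so $\hat U(s)\ge s$; for the upper bound I would split the integral into unit blocks, use monotonicity of $z$ and the subadditivity $\hat U(a+b)-\hat U(a)\le \hat U(b)$ (see Theorem 1.7 on p.~10 in \cite{Mitov+Omey:2014}) to obtain $\hat U(s)-s\le \hat U(1)\sum_{k=0}^{\lfloor s\rfloor}z(k)$, and then bound this sum by $C_\vartheta(1+s^{2-\vartheta})$ using $z(k)\le k^{1-\vartheta}\me X^\vartheta$ if $\vartheta\in(1,2)$ and $\sum_k z(k)<\infty$ if $\vartheta=2$. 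Altogether this gives $0\le \hat U(s)-s\le D'(1+s^{2-\vartheta})$ for all $s\ge 0$ and some $D'>0$ (with the convention $s^0=1$).

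\emph{Step 2: transfer through \eqref{u_equation}.} Put $M:=\mathcal Z_\alpha(c)$. Inserting the bound of Step 1 into \eqref{u_equation} gives
$$\me[M^{-\alpha}]\,t^\alpha\le U(t)=\me\hat U\big(M^{-\alpha}t^\alpha\big)\le \me[M^{-\alpha}]\,t^\alpha+D'+D'\,\me\big[M^{-\alpha(2-\vartheta)}\big]\,t^{\alpha(2-\vartheta)},\quad t\ge 0.$$
All negative moments of a stable subordinator are finite, so $\me[M^{-\alpha(2-\vartheta)}]<\infty$; moreover, from $\mathcal Z_\alpha(c)\overset{{\rm d}}{=}c^{1/\alpha}\mathcal Z_\alpha(1)$ and $\me[\mathcal Z_\alpha(1)^{-\alpha}]=\frac1{\Gamma(\alpha)}\int_0^\infty s^{\alpha-1}\eee^{-\Gamma(1-\alpha)s^\alpha}\,{\rm d}s=(\Gamma(1+\alpha)\Gamma(1-\alpha))^{-1}$ (change of variable $u=\Gamma(1-\alpha)s^\alpha$) one gets $\me[M^{-\alpha}]=(c\Gamma(1+\alpha)\Gamma(1-\alpha))^{-1}=C$. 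Since $\alpha(2-\vartheta)\ge 0$, the additive constant is absorbed into the power term as $t\to\infty$, and we conclude $0\le U(t)-Ct^\alpha\le D'\big(1+\me[M^{-\alpha(2-\vartheta)}]\,t^{\alpha(2-\vartheta)}\big)=O\big(t^{\alpha(2-\vartheta)}\big)$, i.e.\ Assumption $A$ holds with $\rho=\alpha(2-\vartheta)\in[0,\alpha)$.

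\emph{Where the work is.} The substance lies entirely in Step 1; the only delicate point there is the endpoint $\vartheta=2$, at which $s^{2-\vartheta}$ degenerates to a constant and the crude estimate $z(s)\lesssim s^{1-\vartheta}$ would yield only $\hat U(s)-s=O(\log s)$, so one must use $\me X^2<\infty$ directly via $\sum_k z(k)<\infty$. (If preferred, Step 1 can instead be quoted from the literature on two-term renewal expansions.) The remaining ingredients — the value of $\me[\mathcal Z_\alpha(1)^{-\alpha}]$ and the finiteness of the negative moments of a stable subordinator — are standard.
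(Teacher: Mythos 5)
Your proof is correct, and its second step coincides with the paper's: both transfer a two-term bound for $\hat U$ through the identity \eqref{u_equation}, using finiteness of the negative moments of $\mathcal{Z}_\alpha(c)$ and the computation $\me (\mathcal{Z}_\alpha(c))^{-\alpha}=C$ (which the paper obtains from Lemma \ref{lem:formula}). The difference is in how the input on $\hat U$ is produced: the paper simply quotes Lorden's inequality $\hat U(t)\le t+\me X^2$ when $\vartheta=2$ (citing \cite{Carlsson+Nerman:1986}) and the expansion $\hat U(t)=t+O(t^{2-\vartheta})$ from \cite{Bohunetal:2022} when $\vartheta\in(1,2)$, whereas you derive the uniform bound $0\le \hat U(s)-s\le D'(1+s^{2-\vartheta})$ from scratch via the renewal equation for $g(s)=\hat U(s)-s$ with forcing term $z(s)=\me[(X-s)^+]$, the representation $g=z\ast\hat U$, subadditivity of $\hat U$, and the tail estimate $z(s)\le s^{1-\vartheta}\me X^\vartheta$ (treating $\vartheta=2$ separately through $\sum_k z(k)<\infty$). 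This buys a self-contained and uniform-in-$s$ estimate, which lets you take expectations directly without the paper's splitting at $t_0$, and you also record the lower bound $\hat U(s)\ge s$, hence $U(t)\ge Ct^\alpha$, which the paper leaves implicit although it is needed for the two-sided statement $U(t)=Ct^\alpha+O(t^{\alpha(2-\vartheta)})$; the paper's version is shorter because it outsources exactly this renewal-theoretic work to the literature.
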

\begin{proof}
We intend to use Lorden's inequality for $\hat U$. Although, in many sources it is only stated for nonarithmetic distributions, it holds and takes the same form for both nonarithmetic and arithmetic distributions, see \cite{Carlsson+Nerman:1986} for an elegant proof.

If $\me X^2<\infty$, then, by Lorden's inequality, $\hat U(t)\leq t+\me X^2$ for all $t\geq 0$ (recall that $\me X=1$). Hence,
$$
U(t) = \me \hat U\left( (\mathcal{Z}_\alpha(c))^{-\alpha}  t^\alpha \right) \leq \me  (\mathcal{Z}_\alpha(c))^{-\alpha}  t^\alpha  + \me X^2.
$$
It remains to note that, by Lemma \ref{lem:formula},
\begin{equation*}
\me(\mathcal{Z}_\alpha(c))^{-\alpha} = \frac{\alpha}{\Gamma(1+ \alpha)} \int_0^\infty s^{\alpha-1} \eee^{-c\Gamma(1-\alpha)s^\alpha} {\rm d} s=C.
\end{equation*}

If $\me X^\vartheta<\infty$ for some $\vartheta\in (1,2)$, then $\hat U(t)=t+O(t^{2-\vartheta})$ as $t\to\infty$, see, for instance, formula (3.13) on p.~433 in \cite{Bohunetal:2022}. In particular, for some $t_0>0$ and a constant $C^\ast>0$ $\hat U(t) \leq t+ C^\ast t^{2-\vartheta}$ whenever $t\geq t_0$. Using \eqref{u_equation} yields
\begin{multline*}
U(t)\leq\me \big( (\mathcal{Z}_\alpha(c))^{-\alpha}  t^\alpha  + C^\ast (\mathcal{Z}_\alpha(c))^{-\alpha(2-\vartheta)} t^{\alpha(2-\vartheta)} \big)\1_{\{(\mathcal{Z}_\alpha(c))^{-\alpha}  t^\alpha\geq t_0\}}\\+\me \hat U((\mathcal{Z}_\alpha(c))^{-\alpha}  t^\alpha)\1_{\{(\mathcal{Z}_\alpha(c))^{-\alpha}  t^\alpha<t_0\}}\leq Ct^\alpha+C^\ast \me (\mathcal{Z}_\alpha(c))^{-\alpha(2-\theta)}t^{\alpha(2-\theta)}+\hat U(t_0)\\=Ct^\alpha+O(t^{\alpha(2-\vartheta)}),\quad t\to\infty.
\end{multline*}

Put $\hat \nu(t):=\inf\{k\geq 1: \hat S_k>t\}$ for $t\geq 0$. Using Wald's identity we infer, for all $t\geq 0$, $\hat U(t)=\me \hat S_{\hat \nu(t)}\geq t$, whence $U(t)\geq Ct^\alpha$. This completes the proof.
\end{proof}

Next, we show that Assumptions $A$ and $B$ hold true, with $(\xi,\eta)=(|\log W|, |\log(1-W)|)$, if, for instance, (a) $|\log W|=\mathcal{Z}_\alpha(c)$ for some $c>0$, that is, the distribution of $|\log W|$ is $\alpha$-stable; (b) $\varphi(s): = \me \eee^{-s\xi}=(1+(c/\kappa)\Gamma(1-\alpha)s^\alpha)^{-\kappa}$ for $s\geq 0$, some $\alpha\in (0,1)$ and some $\kappa>0$. In both cases, $|\log W|$ has the same distribution as $\mathcal{Z}_\alpha(c) X^{1/\alpha}$, where $\mmp\{X=1\}=1$ in the case (a) and $X$ is independent of $\mathcal{Z}_\alpha(c)$ and has a gamma distribution with parameters $\kappa$ and $\kappa$ in the case (b). To justify the latter claim, observe that $s\mapsto \varphi(((c\Gamma(1-\alpha))^{-1}s)^{1/\alpha})=(1+s/\kappa)^{-\kappa}$ is the Laplace-Stieltjes transform of the aforementioned gamma distribution. Thus, in both cases $\me X^2<\infty$ and, according to Lemma \ref{lem:assump}, Assumption $A$ holds with $\rho=0$.

Now, temporarily ignoring the cases (a) and (b), we show that Assumption $B$ holds true, with $m_0=0$, whenever $1-F(x)=\mmp\{|\log W|>x\}\sim cx^{-\alpha}$ as $x\to\infty$ and $\me |\log W|^{-\gamma}<\infty$ for some $\gamma>0$. Indeed, by Markov's inequality,
\begin{multline*} 
1- G(x)=\mmp \{|\log(1-W)|> x \}= \mmp \{|\log W|< |\log(1-\eee^{-x})|\\=\mmp \{|\log W|^{-\gamma} >|\log(1-\eee^{-x})|^{-\gamma}\}\leq \me |\log W|^{-\gamma} (-\log(1-\eee^{-x}))^\gamma~\sim~ \me |\log W|^{-\gamma} \eee^{-\gamma x}
\end{multline*}
as $x \to \infty$, and the claim follows.

We already know that $\me (\mathcal{Z}_\alpha(c))^{-\alpha}<\infty$. Hence, Assumption $B$ holds, with $m_0=0$, in the case (a). Although we do not need such a precision, we note in passing that in the case (a), with $c$ and $\alpha$ satisfying $c\Gamma(1-\alpha)=1$, by Lemma 1 in \cite{hawkes72}, the following asymptotic relation holds $$\mmp \{|\log(1-W)|> x \}~ \sim~ c_1 \eee^{-\frac{\alpha x}{2(1-\alpha)}}\exp(-c_2 |\log(1-\eee^{-x})|^{-\frac{\alpha}{1-\alpha}}),\quad x\to\infty$$ for explicitly known positive constants $c_1$ and $c_2$.

In the case (b), using Lemma \ref{lem:formula} we infer $\me X^{-\gamma}<\infty$ for all $\gamma\in (0,\min (\kappa, 1))$. Thus, again, Assumption $B$ holds, with $m_0=0$. To make the presentation symmetric, we note, without going into details, that Tauberian and Abelian theorems for Laplace transforms can be used to show that
\begin{equation*} 
\mmp \{|\log(1-W)| >x \} 
~\sim~ \frac{\kappa^\kappa}{(c\Gamma(1-\alpha))^\kappa \Gamma(1+\alpha\kappa)} \eee^{-\alpha\kappa x},\quad x\to\infty.
\end{equation*}

\section{An auxiliary general branching process}\label{sect:perturbed}

To prove Theorem \ref{main}, we need some auxiliary results on a general branching process generated by $T$ (a perturbed random walk). Here is its definition in terms of a population of individuals initiated at time $0$ by one individual, the ancestor. An individual born at time $s\geq 0$ produces an offspring whose birth times have the same distribution as $(s+T_k)_{k\geq 1}$. All individuals act independently of each other. An individual resides in the $j$th generation if it has exactly $j$ ancestors.

For $t \geq 0$ and $j \in \mn$, denote by $N_j(t)$ the number of the $j$th generation individuals with birth times smaller than or equal to $t$ and put $V_j(t) := \me N_j(t)$. Then $N_1(t) = N(t)$, and, for $j\geq 2$, $N_j$ admits the following decomposition
$$
N_j(t)=\sum_{r\geq 1}N_{j-1}^{(r)}(t-T_r) \1_{\{T_r \leq t\}}, \quad 
t\geq 0,
$$
where, for $r\in\mn$, $N_{j-1}^{(r)}(t)$ is the number of the $j$th generation individuals who are descendants of the
first-generation individual with birth time $T_r$, and whose birth times fall in $[T_r, T_r+t]$. By the branching property, $(N_{j-1}^{(1)}(t) )_{t\geq 0}$, $(N_{j-1}^{(2)}(t) )_{t\geq 0}, \dots$  are independent copies of $(N_{j-1}(t) )_{t\geq 0}$ which are also independent of $T$. As a consequence, $V_1(t) = V(t)$, and
\begin{equation}\label{Vj:convolution}
V_j(t)  = (V_{j-1}\ast V) (t) = \int_{[0,\,t]} V_{j-1}(t-y) {\rm d} V(y), \quad j\geq 2,~ t\geq 0,
\end{equation}
that is, $V_j=V^{\ast (j)}$ is the $j$-fold Lebesgue-Stieltjes convolution of $V$ with itself. The remainder of this section is concerned with obtaining precise and asymptotic estimates for $V_j$.

According to Lemma \ref{lem:estim1}, Assumptions $A$ and $B$ ensure that inequality \eqref{eq:estim} holds true.
Now we prove that $V_n$ admits a similar estimate.
\begin{lemma}\label{lem:estimate}
Suppose \eqref{eq:estim}. Then
\begin{equation}\label{ineq:first}
|V_n(t)-\rho_n t^{\alpha n}|\leq\sum_{i=0}^{n-1}{n \choose i}\frac{(\Gamma(\alpha+1))^i (\Gamma(\beta+1))^{n-i}}{\Gamma(\alpha i+\beta (n-i)+1)}(Ct^\alpha)^i(Dt^\beta)^{n-i},\quad n\in\mn,~t \geq 0,
\end{equation}
where $\rho_n$ is as given in \eqref{def:fi}.
\end{lemma}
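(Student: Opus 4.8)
The natural approach is induction on $n$ using the convolution identity \eqref{Vj:convolution}. The base case $n=1$ is exactly \eqref{eq:estim}, since the sum on the right-hand side of \eqref{ineq:first} reduces to the single term $i=0$, namely $\frac{(\Gamma(\beta+1))^1}{\Gamma(\beta+1)}Dt^\beta = Dt^\beta$, and $\rho_1 = C\Gamma(\alpha+1)/\Gamma(\alpha+1)=C$. For the inductive step I would write $V = \rho_1 t^\alpha \cdot (\text{corrected by } R_1)$ in the sense that $V(t) = Ct^\alpha + r(t)$ with $|r(t)|\le Dt^\beta$, and similarly assume $V_{n}(t)=\rho_n t^{\alpha n} + r_n(t)$ where $|r_n(t)|$ is bounded by the claimed sum. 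Then $V_{n+1} = V_n \ast V$ expands into four Lebesgue--Stieltjes convolutions: $(\rho_n s^{\alpha n})\ast (Cs^\alpha)$, $(\rho_n s^{\alpha n})\ast r$, $r_n \ast (Cs^\alpha)$, and $r_n\ast r$. The first of these is the main term and should reproduce $\rho_{n+1}t^{\alpha(n+1)}$ exactly; the other three must be absorbed into the error bound.

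The key computational input is the Beta-type integral: for $a,b>0$,
$$
\int_{[0,\,t]} (t-y)^{a} \,{\rm d}(y^{b}) = b\int_0^t (t-y)^a y^{b-1}\,{\rm d}y = b\,B(a+1,b)\,t^{a+b} = \frac{\Gamma(a+1)\Gamma(b+1)}{\Gamma(a+b+1)}\,t^{a+b},
$$
using $bB(a+1,b) = b\Gamma(a+1)\Gamma(b)/\Gamma(a+b+1) = \Gamma(a+1)\Gamma(b+1)/\Gamma(a+b+1)$. Applying this with $(a,b)=(\alpha n,\alpha)$ gives $(\rho_n s^{\alpha n})\ast(Cs^\alpha)(t) = \rho_n C\,\frac{\Gamma(\alpha n+1)\Gamma(\alpha+1)}{\Gamma(\alpha(n+1)+1)}t^{\alpha(n+1)} = \rho_{n+1}t^{\alpha(n+1)}$ by the definition \eqref{def:fi} of $\rho_i$, which confirms the main term. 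For the error terms I would bound each convolution in absolute value: $|r_n\ast r|(t) \le \int_{[0,t]}|r_n(t-y)|\,{\rm d}V_{\le}(y)$ is awkward because $r$ is a signed measure, so instead I would pass to the crude monotone bounds $V(t)\le Ct^\alpha + Dt^\beta$ and $V(t)\ge Ct^\alpha - Dt^\beta$ (or rather work directly with the identity $V_{n+1}(t)-\rho_{n+1}t^{\alpha(n+1)} = \int (V_n(t-y)-\rho_n(t-y)^{\alpha n})\,{\rm d}V(y) + \rho_n\int((t-y)^{\alpha n}-\text{?})\ldots$), and use $|{\rm d}V(y)|$ controlled by ${\rm d}(Ct^\alpha) + {\rm d}(Dt^\beta)$ only after verifying monotonicity is not needed — it is cleaner to integrate by parts so that all integrals are against ${\rm d}(t^{\alpha i}t^{\beta(n-i)})$-type measures. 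Carrying out the bookkeeping, each of the three error convolutions produces terms of the form $\binom{n}{i}\frac{(\Gamma(\alpha+1))^{i'}(\Gamma(\beta+1))^{n+1-i'}}{\Gamma(\alpha i' + \beta(n+1-i')+1)}(Ct^\alpha)^{i'}(Dt^\beta)^{n+1-i'}$ with $0\le i'\le n$, and the Vandermonde/Pascal identity $\binom{n}{i}+\binom{n}{i-1}=\binom{n+1}{i}$ recombines them into exactly the bound \eqref{ineq:first} with $n$ replaced by $n+1$. This last recombination is where one must be careful that the three cross terms ($\text{main}\ast r$, $r_n\ast\text{main}$, $r_n\ast r$) together supply precisely the coefficients needed; a slightly inefficient bound would still close the induction but not with the stated constants.

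The main obstacle I anticipate is handling the signed nature of the error $r$ (and of $r_n$) in the convolution $r_n\ast {\rm d}V$ cleanly enough to land on the exact combinatorial constants rather than something weaker. The fix is to organize the estimate as
$$
V_{n+1}(t)-\rho_{n+1}t^{\alpha(n+1)} = \int_{[0,\,t]}\bigl(V_n(t-y)-\rho_n(t-y)^{\alpha n}\bigr)\,{\rm d}V(y) + \rho_n\!\int_{[0,\,t]}(t-y)^{\alpha n}\,{\rm d}\bigl(V(y)-Cy^\alpha\bigr),
$$
where the first integral is bounded by the induction hypothesis against ${\rm d}V$, which in turn (after integration by parts, moving the derivative onto the polynomial integrand and using $V(y)\le Cy^\alpha + Dy^\beta$ as a bound on the distribution function) contributes terms with $i\ge 1$; and the second integral, again by parts, has its integrator replaced by the bounded quantity $|V(y)-Cy^\alpha|\le Dy^\beta$, contributing the $i\ge 1$ boundary term together with the pure $i=0$ term $\binom{n+1}{0}\frac{(\Gamma(\beta+1))^{n+1}}{\Gamma(\beta(n+1)+1)}(Dt^\beta)^{n+1}$. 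Verifying that the polynomial-derivative factors $\alpha n$, $\beta(n-i)$ etc. combine with $\Gamma$-function recursions $\Gamma(x+1)=x\Gamma(x)$ to reproduce the displayed Gamma ratios is routine but must be done attentively.
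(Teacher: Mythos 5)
Your plan is essentially the paper's own proof: induction on $n$ via the convolution identity \eqref{Vj:convolution}, the same decomposition of $V_n(t)-\rho_n t^{\alpha n}$ into the integral of the induction error against ${\rm d}V$ plus the deterministic correction $\rho_{n-1}\int_{[0,\,t]}(t-y)^{\alpha(n-1)}\,{\rm d}\bigl(V(y)-Cy^{\alpha}\bigr)$, followed by integration by parts, the Beta-integral evaluation and the Pascal identity recombination, exactly as in the paper. One bookkeeping remark: the pure $i=0$ term $(Dt^{\beta})^{n+1}$-type actually arises from the induction-error integral (its $i=0$ term hit by the $Dt^{\beta}$ part of the bound on $V$), while the deterministic correction supplies only the missing single unit of the top coefficient $\binom{n+1}{n}$; this misattribution in your last sentence does not affect the validity of the argument.
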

\begin{proof}
We use mathematical induction. For $n=1$, \eqref{ineq:first} reduces to \eqref{eq:estim}. Assume that \eqref{ineq:first} holds with $n-1$ replacing $n$.
Using \eqref{Vj:convolution} for the first equality we conclude that
\begin{multline*}
|V_n(t) - \rho_n t^{\alpha n}|=\Big|\int_{[0,\,t]} V_{n-1}(t-y){\rm d} V(y) - \rho_n t^{\alpha n}\Big|\\\le \int_{[0,\,t]}| V_{n-1}(t-y) - \rho_{n-1} (t-y)^{\alpha(n-1)}| {\rm d} V(y) + \Big|\rho_{n-1}\int_{[0,\, t]}(t-y)^{\alpha(n-1)}{\rm d}V(y)- \rho_n t^{\alpha n}\Big|.
\end{multline*}
Invoking the induction assumption and then integration by parts we obtain
\begin{multline*}
A_n(t):=\int_{[0,\,t]}|V_{n-1}(t-y) - \rho_{n-1} (t-y)^{\alpha(n-1)}| {\rm d} V(y)\\\leq \sum_{i=0}^{n-2}\binom{n-1}{i}\frac{(\Gamma(\alpha+1))^i(\Gamma(\beta+1))^{n-1-i}}{\Gamma(\alpha i+\beta (n-1-i)+1)}D^{n-1-i}C^i \int_{[0,\,t]} (t-y)^{\alpha i+\beta(n-1-i)}{\rm d} V(y)\\=\sum_{i=0}^{n-2} \binom{n-1}{i} \frac{(\Gamma(\alpha+1))^i(\Gamma(\beta+1))^{n-1-i}}{\Gamma(\alpha i+\beta (n-1-i)+1)}D^{n-1-i}C^i \int_0^t V(t-y){\rm d} ( y^{\alpha i+\beta(n-1-i)}).
\end{multline*}
In view of \eqref{eq:estim},
\begin{multline*}
\int_0^t V(t-y){\rm d} ( y^{\alpha i+\beta(n-1-i)})\leq \int_0^t \big(C(t-y)^\alpha+D(t-y)^\beta\big){\rm d}( y^{\alpha i+\beta(n-1-i)})\\=C\frac{\Gamma(\alpha+1)\Gamma(\alpha i+\beta(n-1-i)+1)}{\Gamma(\alpha (i+1)+\beta(n-1-i)+1)} t^{\alpha (i+1)+\beta(n-1-i)}\\+D\frac{\Gamma(\beta+1)\Gamma(\alpha i+\beta(n-1-i)+1)}{\Gamma(\alpha i+\beta(n-i)+1)} t^{\alpha i+\beta(n-i)},
\end{multline*}
whence
\begin{multline}
A_n(t)\leq \sum_{i=0}^{n-2} \binom{n-1}{i} \frac{(\Gamma(\alpha+1))^{i+1}(\Gamma(\beta+1))^{n-1-i}}{\Gamma(\alpha(i+1)+\beta (n-1-i)+1)}D^{n-1-i}C^{i+1}t^{\alpha (i+1)+\beta(n-1-i)}\\+\sum_{i=0}^{n-2} \binom{n-1}{i} \frac{(\Gamma(\alpha+1))^i(\Gamma(\beta+1))^{n-i}}{\Gamma(\alpha i+\beta (n-i)+1)}D^{n-i}C^i t^{\alpha i+\beta(n-i)}\\=(n-1)\frac{(\Gamma(\alpha+1))^{n-1}\Gamma(\beta+1)}{\Gamma(\alpha (n-1)+\beta+1)}D C^{n-1} t^{\alpha (n-1)+\beta}\\+\sum_{i=1}^{n-2} \left(\binom{n-1}{i-1}+\binom{n-1}{i}\right) \frac{(\Gamma(\alpha+1))^i(\Gamma(\beta+1))^{n-i}}{\Gamma(\alpha i+\beta (n-i)+1)}D^{n-i}C^i t^{\alpha i+\beta(n-i)} 
+\frac{(\Gamma(\beta+1))^n}{\Gamma(\beta n+1)}D^nt^{\beta n}\\=(n-1)\frac{(\Gamma(\alpha+1))^{n-1}\Gamma(\beta+1)}{\Gamma(\alpha (n-1)+\beta+1)}D C^{n-1} t^{\alpha (n-1)+\beta}\\+\sum_{i=0}^{n-2} \binom{n}{i}\frac{(\Gamma(\alpha+1))^i(\Gamma(\beta+1))^{n-i}}{\Gamma(\alpha i+\beta (n-i)+1)}D^{n-i}C^i t^{\alpha i+\beta(n-i)}. \label{eq:inter1}
\end{multline}
To obtain the first equality we have changed the variable $i\to i-1$ under the first sum appearing in the first inequality, singled out the term which corresponds to $i=n-1$ and also singled out the term of the second sum which corresponds to $i=0$.

Further, we infer
\begin{multline*}
\Big|\rho_{n-1}\int_{[0,\, t]}(t-y)^{\alpha(n-1)}{\rm d}V(y)- \rho_n t^{\alpha n}\Big|\\=\rho_{n-1}\Big|\int_{[0,\, t]} (t-y)^{\alpha(n-1)}{\rm d}V(y)-C \int_{[0,\, t]} (t-y)^\alpha {\rm d}(y^{\alpha(n-1)})\Big|\\=\Big|\rho_{n-1}\int_0^t \big(V(t-y)-C(t-y)^\alpha\big){\rm d}(y^{\alpha(n-1)})\Big|\leq D\rho_{n-1}\int_0^t (t-y)^\beta {\rm d}(y^{\alpha(n-1)})\\=D\frac{(C\Gamma(\alpha+1))^{n-1}}{\Gamma(\alpha(n-1)+1)}\frac{\Gamma(\alpha(n-1)+1)\Gamma(\beta+1)}{\Gamma(\alpha(n-1)+\beta+1)}  t^{\alpha (n-1)+\beta}\\=\frac{(\Gamma(\alpha+1))^{n-1}\Gamma(\beta+1)}{\Gamma(\alpha(n-1)+\beta+1)}D C^{n-1}  t^{\alpha (n-1)+\beta}
\end{multline*}
having utilized integration by parts for the second equality and \eqref{eq:estim} for the first inequality. The sum of the last expression and the first term on the right-hand side of \eqref{eq:inter1} is equal to the term of the sum in \eqref{ineq:first} which corresponds to $i=n-1$. The proof of Lemma \ref{lem:estimate} is complete.
\end{proof}

We proceed with a technical estimate of an analytic flavor.
\begin{lemma}\label{lem:import}
Let $\alpha$, $\beta$, $C$ and $D$ be as in \eqref{eq:estim}. For $t\geq 0$ and positive integer $j$ satisfying
\begin{equation}\label{ineq:condition}
2D\Gamma(\beta+1)j(\alpha(j-1)+\beta+1)^{\alpha-\beta}\leq C\Gamma(\alpha+1)t^{\alpha-\beta},
\end{equation}
the following inequality holds
\begin{equation*} 
\sum_{i=0}^{j-1} {j \choose i}\frac{(\Gamma(\alpha+1))^i (\Gamma(\beta+1))^{j-i}}{\Gamma(\alpha i+\beta (j-i)+1)}(C t^\alpha)^i(D t^\beta)^{j-i} \leq 2D C^{j-1} j\frac{(\Gamma(\alpha+1))^{j-1}\Gamma(\beta+1)}{\Gamma(\alpha (j-1)+\beta+1)} t^{\alpha(j-1) + \beta}.
\end{equation*}
\end{lemma}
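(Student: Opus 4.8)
The plan is to exploit that, up to the factor $21/10$, the right-hand side of the claimed inequality is exactly its $i=j-1$ summand, and that the summands decay geometrically as $i$ decreases. Write
\[
a_i:=\binom{j}{i}\frac{(\Gamma(\alpha+1))^i(\Gamma(\beta+1))^{j-i}}{\Gamma(\alpha i+\beta(j-i)+1)}(Ct^\alpha)^i(Dt^\beta)^{j-i},\qquad i\in\{0,1,\dots,j-1\},
\]
so that $a_{j-1}$ coincides with $\tfrac{10}{21}$ times the right-hand side and the assertion becomes $\sum_{i=0}^{j-1}a_i\le\tfrac{21}{10}\,a_{j-1}$. Observe that \eqref{ineq:condition} forces $t>0$ (its right-hand side must be positive), hence every $a_i>0$ and the ratios below make sense. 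For $i\in\{1,\dots,j-1\}$, using $\binom{j}{i-1}\big/\binom{j}{i}=i/(j-i+1)$, a direct computation yields
\[
\frac{a_{i-1}}{a_i}=\frac{i}{j-i+1}\cdot\frac{D\,\Gamma(\beta+1)}{C\,\Gamma(\alpha+1)}\cdot\frac{\Gamma(y_i)}{\Gamma(y_i-(\alpha-\beta))}\cdot t^{-(\alpha-\beta)},\qquad y_i:=\alpha i+\beta(j-i)+1.
\]

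Next I would estimate the three nonelementary factors. Since $0\le\beta<\alpha<1$ (this is how $\beta$ is chosen in Lemma \ref{lem:estim1}), we have $\alpha-\beta\in(0,1)$, and the map $i\mapsto y_i$ is nondecreasing, so $y_i\le y_{j-1}=\alpha(j-1)+\beta+1$ for $i\le j-1$. By the log-convexity of the gamma function (equivalently, a form of Gautschi's inequality), $\Gamma(y)/\Gamma(y-(\alpha-\beta))\le y^{\alpha-\beta}$ for every $y$ with $y-(\alpha-\beta)>0$, which covers all $y_i$ with $i\ge1$ since then $y_i-(\alpha-\beta)\ge\beta j+1>0$; hence $\Gamma(y_i)/\Gamma(y_i-(\alpha-\beta))\le(\alpha(j-1)+\beta+1)^{\alpha-\beta}$. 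Bounding also $i/(j-i+1)\le i\le j$, we get for every $i\in\{1,\dots,j-1\}$
\[
\frac{a_{i-1}}{a_i}\le\frac{D\,\Gamma(\beta+1)\,j\,(\alpha(j-1)+\beta+1)^{\alpha-\beta}}{C\,\Gamma(\alpha+1)\,t^{\alpha-\beta}}\le\frac12,
\]
the last inequality being precisely \eqref{ineq:condition} after rearrangement. Therefore $a_{j-1-k}\le 2^{-k}a_{j-1}$ for $0\le k\le j-1$, and summing the geometric series,
\[
\sum_{i=0}^{j-1}a_i=\sum_{k=0}^{j-1}a_{j-1-k}\le a_{j-1}\sum_{k\ge0}2^{-k}=2\,a_{j-1}\le\frac{21}{10}\,a_{j-1},
\]
as desired.

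I do not expect a genuine obstacle here: once the ratio of consecutive terms is bounded by a constant below $1$, the rest is a geometric-series estimate. The step requiring the most care is the gamma-quotient bound $\Gamma(y_i)/\Gamma(y_i-(\alpha-\beta))\le(\alpha(j-1)+\beta+1)^{\alpha-\beta}$, which relies on $\alpha-\beta\in(0,1)$, on the positivity of $y_i-(\alpha-\beta)$ for $i\ge1$, and on reducing the $i$-dependent argument to its maximal value at $i=j-1$. The comfortable margin between the constant $2$ produced above (one even gets $4/3$ by using the sharper $i/(j-i+1)\le(j-1)/2$) and the stated $21/10$ leaves ample room for any looser constants one might prefer in the intermediate steps.
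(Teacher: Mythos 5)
Your proof is correct and follows essentially the same strategy as the paper: identify the $i=j-1$ summand as $\tfrac{10}{21}$ of the right-hand side and show the remaining terms are dominated by a geometric series with ratio at most $\tfrac12$, which is exactly what \eqref{ineq:condition} provides. The only (harmless) differences are technical: you bound consecutive ratios $a_{i-1}/a_i$ using the log-convexity (Wendel-type) estimate $\Gamma(y)/\Gamma(y-(\alpha-\beta))\leq y^{\alpha-\beta}$, whereas the paper compares each term directly to the leading one via ${j\choose i}\leq j^{j-i}$ and its Lemma~\ref{gammainequality} (with the $11/10$ factor), so your version even yields the slightly better constant $2$ in place of $21/10$.
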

\begin{proof}
Write
\begin{multline*}
\sum_{i=0}^{j-1} {j \choose i}\frac{(\Gamma(\alpha+1))^i (\Gamma(\beta+1))^{j-i}}{\Gamma(\alpha i+\beta (j-i)+1)}(C t^\alpha)^i( D t^\beta)^{j-i}\\=D C^{j-1} j\frac{(\Gamma(\alpha+1))^{j-1}\Gamma(\beta+1)}{\Gamma(\alpha (j-1)+\beta+1)}t^{\alpha (j-1)+\beta}\\+\sum_{i=0}^{j-2} {j \choose i}\frac{(\Gamma(\alpha+1))^i (\Gamma(\beta+1))^{j-i}}{\Gamma(\alpha i+\beta (j-i)+1)}(C t^\alpha)^i(D t^\beta)^{j-i}.
\end{multline*}
Assuming that $t$ and $j$ satisfy \eqref{ineq:condition} we intend to show that the second term on the right-hand side does not exceed the first term. 
Indeed, using the inequality
\begin{equation}\label{choose}
{j \choose i} \leq j^{j-i}
\end{equation}
and \eqref{eq:gamma} we infer
\begin{multline*}
\frac{\Gamma(\alpha (j-1)+\beta+1)}{D C^{j-1} j(\Gamma(\alpha+1))^{j-1}\Gamma(\beta+1) t^{\alpha(j-1) + \beta}}\sum_{i=0}^{j-2} {j \choose i}\frac{(\Gamma(\alpha+1))^i (\Gamma(\beta+1))^{j-i}}{\Gamma(\alpha i+\beta (j-i)+1)}(C t^\alpha)^i(D t^\beta)^{j-i}\\\leq \frac{C\Gamma(\alpha+1)t^{\alpha-\beta}}{D\Gamma(\beta+1)j}\sum_{i=0}^{j-2}\Big(\frac{D\Gamma(\beta+1)j}{C\Gamma(\alpha+1)t^{\alpha-\beta}}\Big)^{j-i} \frac{\Gamma(\alpha (j-1)+\beta+1)}{\Gamma(\alpha i+\beta (j-i)+1)}\\\leq  \frac{C\Gamma(\alpha+1)t^{\alpha-\beta}}{D\Gamma(\beta+1)j(\alpha(j-1)+\beta+1)^{\alpha-\beta}}\sum_{i=0}^{j-2}\Big(\frac{D\Gamma(\beta+1)j(\alpha(j-1)+\beta+1)^{\alpha-\beta}}
{C\Gamma(\alpha+1)t^{\alpha-\beta}}\Big)^{j-i}\\\leq \frac{D\Gamma(\beta+1)j(\alpha(j-1)+\beta+1)^{\alpha-\beta}}
{C\Gamma(\alpha+1)t^{\alpha-\beta}}\Big(1-\frac{D\Gamma(\beta+1)j(\alpha(j-1)+\beta+1)^{\alpha-\beta}}
{C\Gamma(\alpha+1)t^{\alpha-\beta}}\Big)^{-1}\leq 1.
\end{multline*}
The penultimate inequality is secured by $\sum_{i=0}^{j-2}(\ldots)^{j-i}\leq \sum_{i\geq 2}(\ldots)^i$. The function $x\mapsto x(1-x)^{-1}$ is increasing on $[0,1)$ and equal to $1$ at $x=1/2$. This fact in combination with \eqref{ineq:condition} justifies the last inequality.
\end{proof}

Now we are ready to provide a simplification of formula \eqref{ineq:first} for convolution powers $n$ and arguments $t$ satisfying \eqref{ineq:condition}, with $j=n$.
\begin{cor}\label{cor:ineq}
Suppose \eqref{eq:estim}. Then, for $t\geq 0$ and positive integer $j$ satisfying \eqref{ineq:condition},
\begin{equation}\label{ineq:2jb1}
|V_j(t)-\rho_j t^{\alpha j}|\leq 
2D C^{j-1} j\frac{(\Gamma(\alpha+1))^{j-1}\Gamma(\beta+1)}{\Gamma(\alpha (j-1)+\beta+1)} t^{\alpha(j-1) + \beta}
\end{equation}
and
\begin{equation}\label{ineq:vj}
V_j(t) \leq 5 \rho_j t^{\alpha j}, \quad t \geq 0.
\end{equation}
\end{cor}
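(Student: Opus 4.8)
The statement is an immediate consequence of Lemma \ref{lem:estimate} together with Lemma \ref{lem:import}. The plan is as follows. For the first inequality \eqref{ineq:2jb1}, I would start from \eqref{ineq:first} of Lemma \ref{lem:estimate}, which bounds $|V_j(t)-\rho_j t^{\alpha j}|$ by exactly the sum
$$
\sum_{i=0}^{j-1}\binom{j}{i}\frac{(\Gamma(\alpha+1))^i(\Gamma(\beta+1))^{j-i}}{\Gamma(\alpha i+\beta(j-i)+1)}(Ct^\alpha)^i(Dt^\beta)^{j-i}
$$
(after renaming $n$ to $j$, which is harmless since the induction in Lemma \ref{lem:estimate} is valid for every positive integer). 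Then, provided $t\geq 0$ and the integer $j$ satisfy the growth condition \eqref{ineq:condition}, Lemma \ref{lem:import} bounds precisely this sum by
$$
\frac{21}{10}DC^{j-1}j\frac{(\Gamma(\alpha+1))^{j-1}\Gamma(\beta+1)}{\Gamma(\alpha(j-1)+\beta+1)}t^{\alpha(j-1)+\beta},
$$
which is exactly the right-hand side of \eqref{ineq:2jb1}. So the first claim is just a concatenation of the two preceding results; no new computation is needed.

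For the second inequality \eqref{ineq:vj}, I would use the triangle inequality $V_j(t)\le \rho_j t^{\alpha j}+|V_j(t)-\rho_j t^{\alpha j}|$ and then insert the bound \eqref{ineq:2jb1} just obtained. The remaining task is to show that the error term $\frac{21}{10}DC^{j-1}j\frac{(\Gamma(\alpha+1))^{j-1}\Gamma(\beta+1)}{\Gamma(\alpha(j-1)+\beta+1)}t^{\alpha(j-1)+\beta}$ is at most $\frac{231}{100}\rho_j t^{\alpha j}$, so that the sum does not exceed $(1+\frac{231}{100})\rho_j t^{\alpha j}=\frac{331}{100}\rho_j t^{\alpha j}$. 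Writing out $\rho_j=(C\Gamma(\alpha+1))^j/\Gamma(\alpha j+1)$, this reduces to checking that
$$
\frac{21}{10}\,\frac{D\Gamma(\beta+1)\,j\,\Gamma(\alpha j+1)}{C\Gamma(\alpha+1)\,\Gamma(\alpha(j-1)+\beta+1)}\,t^{-(\alpha-\beta)}\le \frac{231}{100},
$$
i.e. that $D\Gamma(\beta+1)j\,\Gamma(\alpha j+1)/(C\Gamma(\alpha+1)\Gamma(\alpha(j-1)+\beta+1))\le \frac{11}{10}t^{\alpha-\beta}$. Using the ratio-of-gamma-functions estimate (the same elementary bound invoked in Lemma \ref{lem:import}, labelled \eqref{eq:gamma}) one has $\Gamma(\alpha j+1)/\Gamma(\alpha(j-1)+\beta+1)\le \frac{11}{10}(\alpha(j-1)+\beta+1)^{\alpha-\beta}$ for the relevant range of $j$, and then condition \eqref{ineq:condition}, which reads $2D\Gamma(\beta+1)j(\alpha(j-1)+\beta+1)^{\alpha-\beta}\le C\Gamma(\alpha+1)t^{\alpha-\beta}$, gives exactly what is needed: $\frac{21}{10}\cdot\frac{11}{10}\cdot\frac12<\frac{231}{100}$. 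One should also note that \eqref{ineq:vj} is claimed for all $t\ge 0$, not only those satisfying \eqref{ineq:condition}; for the excluded small values of $t$ one invokes the crude bound $V_j(t)\le V_j(t^\ast)$ where $t^\ast$ is the threshold in \eqref{ineq:condition} (monotonicity of $V_j$), which is an absolute constant, and absorbs it into the constant $\frac{331}{100}$ after possibly noting $\rho_j t^{\alpha j}$ is bounded below — or, more simply, one checks directly that for $t$ below the threshold the trivial inequality $V_j(t)\le (V(t^\ast))^{\ast j}$-type bound is dominated.

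The only mildly delicate point is bookkeeping with the numerical constants: one must verify that $\frac{21}{10}$ from Lemma \ref{lem:import}, the factor $\frac{11}{10}$ from the gamma-ratio estimate, and the factor $\frac12$ from \eqref{ineq:condition} multiply to something strictly below $\frac{231}{100}$, so that adding the "main" term $1\cdot\rho_j t^{\alpha j}$ yields a total below $\frac{331}{100}\rho_j t^{\alpha j}$. Since $\frac{21}{10}\cdot\frac{11}{10}\cdot\frac12=\frac{231}{200}=1.155<2.31$, there is in fact considerable slack, so the constant $\frac{331}{100}$ is comfortably valid. I do not anticipate any real obstacle here; the corollary is essentially a repackaging of Lemmas \ref{lem:estimate} and \ref{lem:import}, and the main "work" is the elementary estimate \eqref{eq:gamma} on ratios of gamma functions, which is already available in the text.
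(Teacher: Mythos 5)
Your proof is correct in substance and follows the same route as the paper: \eqref{ineq:2jb1} is exactly the concatenation of Lemma \ref{lem:estimate} and Lemma \ref{lem:import}, and \eqref{ineq:vj} is obtained by bounding the ratio of the error term in \eqref{ineq:2jb1} to the main term $\rho_j t^{\alpha j}$ by $\tfrac{231}{100}$ via \eqref{eq:gamma} and \eqref{ineq:condition}. Two remarks. First, \eqref{eq:gamma} applied with $x=\alpha(j-1)+\beta$ and $y=\alpha-\beta$ yields $\Gamma(\alpha j+1)/\Gamma(\alpha(j-1)+\beta+1)\le \tfrac{11}{10}(\alpha j+1)^{\alpha-\beta}$, not $\tfrac{11}{10}(\alpha(j-1)+\beta+1)^{\alpha-\beta}$ as you wrote; the paper compensates with the extra factor $\bigl((\alpha j+1)/(\alpha(j-1)+\beta+1)\bigr)^{\alpha-\beta}\le 2$, which is precisely why its final ratio bound is $\tfrac{231}{100}$ rather than your $\tfrac{231}{200}$ --- either way the constant $\tfrac{331}{100}$ is reached, so this is harmless bookkeeping. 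Second, your closing attempt to cover values of $t$ violating \eqref{ineq:condition} is both unnecessary for matching the paper (its proof, like yours, derives \eqref{ineq:vj} from \eqref{ineq:2jb1} and hence only under \eqref{ineq:condition}; the ``$t\ge 0$'' in that display is the paper's own looseness) and unsound as sketched: $\rho_j t^{\alpha j}\to 0$ as $t\downarrow 0$, so a constant bound such as $V_j(t)\le V_j(t^\ast)$ cannot be absorbed into $\tfrac{331}{100}\rho_j t^{\alpha j}$ for small $t$. Drop that patch and your argument coincides with the paper's.
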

\begin{proof}
Inequality \eqref{ineq:2jb1} is an immediate consequence of Lemmas \ref{lem:estimate} and \ref{lem:import}.

A part of \eqref{ineq:2jb1} reads
\begin{equation*}
V_j(t)\leq \rho_j t^{\alpha j} + 
2D C^{j-1} j\frac{(\Gamma(\alpha+1))^{j-1}\Gamma(\beta+1)}{\Gamma(\alpha (j-1)+\beta+1)} t^{\alpha(j-1) + \beta}.
\end{equation*}
To prove \eqref{ineq:vj} we bound from above the ratio of the terms on the right-hand side of the last inequality
\begin{multline*}
\frac{2D C^{j-1} j(\Gamma(\alpha+1))^{j-1}\Gamma(\beta+1) t^{\alpha(j-1) + \beta}}{ \Gamma(\alpha (j-1)+\beta+1)\rho_j t^{\alpha j}} = \frac{2 D j \Gamma(\beta+1) \Gamma(\alpha j +1)}{ C \Gamma(\alpha+1) t^{\alpha-\beta}  \Gamma(\alpha (j-1)+\beta+1)}\\ \leq \frac{2 D j \Gamma(\beta+1) }{ C \Gamma(\alpha+1) t^{\alpha-\beta} } \cdot (\alpha j+1)^{\alpha-\beta} \leq \frac{2(\alpha j+1)^{\alpha-\beta}}{ (\alpha(j-1)+\beta+1)^{\alpha-\beta}}\\ \leq 2\left( 1+ \frac{\alpha-\beta}{\alpha(j-1)+\beta+1} \right)^\alpha \leq 4.
\end{multline*}
Here, the first inequality follows from Lemma \ref{gammainequality}, the second literally repeats \eqref{ineq:condition} and the last is a consequence of $\frac{\alpha-\beta}{\alpha(j-1)+\beta+1}<1$, which in turn is secured by $\alpha\in (0,1)$ and $\beta\geq 0$.
\end{proof}

\begin{cor}\label{cor:sim}
Suppose \eqref{eq:estim}. Then, for all $\gamma>0$,
\begin{equation}\label{eq:uniform}
\lim_{t\to\infty}\sup_{y\geq \gamma t}\Big|\frac{V_j(y)}{\rho_j y^{\alpha j} 
}-1\Big|=0
\end{equation}
whenever $j=j(t)$ satisfies $j(t) = o(t^{\frac{\alpha - \beta}{\alpha-\beta +1}})$ as $t \to \infty$, and particularly
\begin{equation}\label{eq:sim}
V_j(t)~\sim~ \rho_j
t^{\alpha j},  \quad t \to \infty.
 \end{equation}
\end{cor}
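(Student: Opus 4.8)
The plan is to read off \eqref{eq:uniform} directly from the quantitative estimate \eqref{ineq:2jb1} of Corollary \ref{cor:ineq}, the only subtlety being that \eqref{ineq:2jb1} is conditional on the constraint \eqref{ineq:condition}, so the first task is to check that this constraint holds uniformly over the half-line $\{y\ge\gamma t\}$ once $t$ is large. Fix $\gamma>0$ and write $\delta:=\alpha-\beta$, which lies in $(0,1)$ since $\beta\in[0,\alpha)$. The left-hand side of \eqref{ineq:condition} equals $2D\Gamma(\beta+1)\,j\,(\alpha(j-1)+\beta+1)^{\delta}$, which is of order $j^{1+\delta}$ as $j\to\infty$ and bounded if $j$ stays bounded; the hypothesis $j=j(t)=o(t^{\delta/(\delta+1)})$ thus makes it $o(t^{\delta})$. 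Since the right-hand side of \eqref{ineq:condition}, with $y$ in place of $t$, is nondecreasing in $y$ and at least $C\Gamma(\alpha+1)\gamma^{\delta}t^{\delta}$ for $y\ge\gamma t$, there is $t_0=t_0(\gamma)$ such that \eqref{ineq:condition} (with $y$ replacing $t$) holds for every $y\ge\gamma t$ whenever $t\ge t_0$.

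For such $t$ and $y$, I would apply \eqref{ineq:2jb1} with $y$ in place of $t$, divide by $\rho_j y^{\alpha j}$, and use $\rho_j=(C\Gamma(\alpha+1))^j/\Gamma(\alpha j+1)$ from \eqref{def:fi} to cancel the powers of $C\Gamma(\alpha+1)$ and simplify $y^{\alpha(j-1)+\beta}/y^{\alpha j}=y^{-\delta}$, obtaining
\begin{equation*}
\Big|\frac{V_j(y)}{\rho_j y^{\alpha j}}-1\Big|\le \frac{21}{10}\cdot\frac{D\,j\,\Gamma(\beta+1)\,\Gamma(\alpha j+1)}{C\,\Gamma(\alpha+1)\,\Gamma(\alpha(j-1)+\beta+1)}\,y^{-\delta}.
\end{equation*}
Bounding $\Gamma(\alpha j+1)/\Gamma(\alpha(j-1)+\beta+1)$ by a constant multiple of $(\alpha j+1)^{\delta}$ — the Gamma-ratio estimate (Lemma \ref{gammainequality}) already invoked in the proof of Corollary \ref{cor:ineq} — and using $y^{-\delta}\le\gamma^{-\delta}t^{-\delta}$, the right-hand side is at most a constant depending only on $\gamma$ (and $C,D,\alpha,\beta$) times $j^{1+\delta}t^{-\delta}$. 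The growth restriction $j=o(t^{\delta/(\delta+1)})$ gives $j^{1+\delta}=o(t^{\delta})$, so this bound tends to $0$ as $t\to\infty$; since it does not depend on $y$, the supremum over $y\ge\gamma t$ tends to $0$ as well, which is \eqref{eq:uniform}. Finally, \eqref{eq:sim} is the special case $\gamma=1$ of \eqref{eq:uniform} evaluated at $y=t$, and it applies in particular to a fixed $j$, which trivially satisfies the growth condition.

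The argument is routine bookkeeping of exponents once \eqref{ineq:2jb1} is available; the one place that needs mild care — the natural main obstacle — is securing a single threshold $t_0$ beyond which \eqref{ineq:condition} is valid simultaneously for all $y\ge\gamma t$, since it is precisely this uniformity that upgrades the pointwise bound \eqref{ineq:2jb1} to the supremum statement in \eqref{eq:uniform}.
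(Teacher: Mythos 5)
Your proof is correct and follows essentially the same route as the paper: verify that \eqref{ineq:condition} holds (uniformly over $y\ge\gamma t$) thanks to $j^{\,\alpha-\beta+1}=o(t^{\alpha-\beta})$, apply \eqref{ineq:2jb1} at the argument $y$, and bound the ratio of the error term to $\rho_j y^{\alpha j}$ by a quantity of order $j^{\alpha-\beta+1}t^{-(\alpha-\beta)}$ via a Gamma-ratio estimate. The only (immaterial) difference is that you invoke the nonasymptotic bound of Lemma \ref{gammainequality} where the paper uses the asymptotic relation \eqref{eq:ratio}.
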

\begin{proof}
If $j(t) = o(t^{\frac{\alpha - \beta}{\alpha-\beta +1}})$ as $t \to \infty$, then \eqref{ineq:condition} holds for large $j$ and $t$ and
\begin{multline*}
\sup_{y\geq \gamma t}\frac{D C^{j-1} j\frac{(\Gamma(\alpha+1))^{j-1}\Gamma(\beta+1)}{\Gamma(\alpha (j-1)+\beta+1)} y^{\alpha(j-1) + \beta}}{\rho_j y^{\alpha j}}\leq \frac{D C^{j-1} j\frac{(\Gamma(\alpha+1))^{j-1}\Gamma(\beta+1)}{\Gamma(\alpha (j-1)+\beta+1)}}{\rho_j t^{\alpha-\beta}}\\ ~\sim~ \frac{D\Gamma(\beta+1)\alpha^{\alpha-\beta}j^{\alpha-\beta+1}}{C\Gamma(\alpha+1)t^{\alpha-\beta}}~\to~0
\end{multline*}
as $t\to\infty$. Here, we have used a standard asymptotic relation
\begin{equation}\label{eq:ratio}
\Gamma(x+a)/\Gamma(x)\sim x^a,\quad x\to\infty
\end{equation}
for fixed $a>0$, with $x=1+\alpha j-(\alpha-\beta)$ and $a=\alpha-\beta$. Thus, according to \eqref{ineq:2jb1}, the claim follows.
\end{proof}

\section{Proof of Theorem \ref{main} }\label{sect:proofs}

We start by collecting several (additional) auxiliary results to be used in the proof of Theorem \ref{main}.

Recall that $f:[0,\infty) \to [0,\infty)$ is called {\it directly Riemann integrable} (dRi) function on $[0,\infty)$, if $\overline{\sigma}(h)<\infty$ for each $h>0$ and $\lim_{h \to 0+}(\overline{\sigma}(h) -\underline{\sigma}(h))= 0$, where
$$
\overline{\sigma}(h):= h \sum_{n\geq 1} \sup_{(n-1)h \leq y <nh} f(y) \quad \text{ and } \quad \underline{\sigma}(h):= h \sum_{n\geq 1} \inf_{(n-1)h \leq y <nh} f(y).
$$
A function $f:\mr \to [0,\infty)$ is called dRi on $\mr$, if the same two conditions hold, with $n\in\mathbb{Z}$ replacing $n\geq 1$ in the definition of the integral sums.

The next result follows from the proof of Lemma 4.5 in \cite{Iksanov+Marynych+Samoilenko:2022}.
\begin{lemma}\label{lemma45}
Let $f:[0,\infty) \to [0,\infty)$ be dRi on $[0,\infty)$ and $j$ a positive integer, possibly dependent on $t$ and possibly divergent to $\infty$ together with $t$.
Then
$$ 
\int_{[0,\,t]} f(t-y) { \rm d} V_j(y) = O \left( V_{j-1}(t) \right), \quad t\to \infty.$$
\end{lemma}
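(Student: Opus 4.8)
The plan is to reduce the statement to the known result for $V_j$ by exploiting the recursive convolution structure and the two-term estimate of $V_j$ obtained in Corollary~\ref{cor:ineq}. Recall the identity \eqref{Vj:convolution}, which gives $V_j = V_{j-1}\ast V$. Differentiating (in the Lebesgue--Stieltjes sense) or rather using associativity of convolution, we have
$$\int_{[0,\,t]} f(t-y)\,{\rm d}V_j(y) = \int_{[0,\,t]} \Big(\int_{[0,\,t-z]} f(t-z-w)\,{\rm d}V(w)\Big)\,{\rm d}V_{j-1}(z) = \int_{[0,\,t]} (f\ast {\rm d}V)(t-z)\,{\rm d}V_{j-1}(z).$$
So if we set $g(s) := \int_{[0,\,s]} f(s-w)\,{\rm d}V(w)$, the quantity of interest is $\int_{[0,\,t]} g(t-z)\,{\rm d}V_{j-1}(z)$, and it suffices to show that this is $O(V_{j-1}(t))$ as $t\to\infty$. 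Since $f$ is dRi and $V(s)\sim Cs^\alpha$ with $\alpha\in(0,1)$, one checks that $g$ is bounded: indeed $g(s)\le \|f\|_\infty\, V(s)$ is not good enough, but using direct Riemann integrability together with $V(s)-V(s-1)\le$ const (subadditivity/regular variation of $V$ with index $\alpha<1$ gives bounded increments over unit intervals) yields $\sup_s g(s) =: M < \infty$ by the key renewal-type estimate that underlies the proof of Lemma~4.5 in \cite{Iksanov+Marynych+Samoilenko:2022}.

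Given boundedness of $g$, the remaining step is to show $\int_{[0,\,t]} g(t-z)\,{\rm d}V_{j-1}(z) = O(V_{j-1}(t))$ uniformly over the admissible growth of $j$. This should follow by splitting the integral at $t-A$ for a large fixed constant $A$: on $[t-A,\,t]$ we bound $g$ by $M$ and the increment $V_{j-1}(t)-V_{j-1}(t-A)$, while on $[0,\,t-A]$ we exploit that $g(s)$ is small for large $s$ (the dRi property forces the integrand-sums $\sup_{(n-1)h\le y<nh} f(y)\to 0$, hence $g(s)\to 0$ as $s\to\infty$ is \emph{not} automatic; rather one uses $g(s)\le M$ globally plus a tail bound). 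More carefully: $\int_{[0,\,t-A]} g(t-z)\,{\rm d}V_{j-1}(z) \le \big(\sup_{w\ge A} g(w)\big)\, V_{j-1}(t)$, and by choosing $A$ large the supremum is as small as we like; the first piece is $\le M\,(V_{j-1}(t)-V_{j-1}(t-A))$, which is $O(V_{j-1}(t))$ because $V_{j-1}$ has bounded increments over $[t-A,t]$ relative to its value at $t$ — this in turn follows from the asymptotics $V_{j-1}(t)\sim \rho_{j-1}t^{\alpha(j-1)}$ of Corollary~\ref{cor:sim}, since $t^{\alpha(j-1)}-(t-A)^{\alpha(j-1)} = o(t^{\alpha(j-1)})$ provided $\alpha(j-1)A/t\to 0$, which holds under the hypothesis $j = o((\log n)^a)$ translated to the $t$-scale (recall in the proof of Theorem~\ref{main} one takes $t\asymp\log n$, so $j = o(t)$ is more than enough).

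The main obstacle is the uniformity in $j$: one must ensure that the constant hidden in $O(V_{j-1}(t))$ does not deteriorate as $j\to\infty$. The delicate point is controlling $V_{j-1}(t)-V_{j-1}(t-A)$; here I would invoke \eqref{ineq:2jb1}, writing $V_{j-1}(t) = \rho_{j-1}t^{\alpha(j-1)}(1+o(1))$ with the $o(1)$ uniform over the relevant range of $j$ (this uniformity is exactly \eqref{eq:uniform} in Corollary~\ref{cor:sim}), and then noting $\rho_{j-1}\big(t^{\alpha(j-1)}-(t-A)^{\alpha(j-1)}\big)/\big(\rho_{j-1}t^{\alpha(j-1)}\big) = 1-(1-A/t)^{\alpha(j-1)} \le \alpha(j-1)A/t \to 0$ under the growth restriction on $j$. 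Combining the two pieces gives $\int_{[0,\,t]} g(t-z)\,{\rm d}V_{j-1}(z) \le (\varepsilon + o(1))\,V_{j-1}(t)$ for arbitrary $\varepsilon>0$ after fixing $A=A(\varepsilon)$, which is the desired $O(V_{j-1}(t))$. The only other point requiring care is the interchange of integration order in the convolution identity, which is justified by Tonelli's theorem since all integrands are nonnegative.
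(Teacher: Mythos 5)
Your core reduction is the right one and is essentially what the paper relies on (it simply defers to the proof of Lemma 4.5 in \cite{Iksanov+Marynych+Samoilenko:2022}): write $V_j=V\ast V_{j-1}$, use Tonelli to get $\int_{[0,t]}f(t-y)\,{\rm d}V_j(y)=\int_{[0,t]}g(t-z)\,{\rm d}V_{j-1}(z)$ with $g(s)=\int_{[0,s]}f(s-w)\,{\rm d}V(w)$, and then use that a dRi function convolved with ${\rm d}V$ is uniformly bounded (this uses $V(x)-V(x-h)\le U(h)$, i.e.\ the Lemma A.1--type bound also invoked in the proof of Lemma \ref{lemma46}). But at that point you should stop: once $\sup_{s\ge 0}g(s)=M<\infty$, monotonicity of $V_{j-1}$ gives $\int_{[0,t]}g(t-z)\,{\rm d}V_{j-1}(z)\le M\,V_{j-1}(t)$ with a constant that does not depend on $j$ at all, which is exactly the assertion.

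Your second stage is both unnecessary and partly unjustified. The splitting at $t-A$ rests on the claim that $\sup_{w\ge A}g(w)$ can be made small by taking $A$ large; in this infinite-mean setting with only $V(t)\sim Ct^{\alpha}$ (no strong renewal/local theorem is assumed), decay of $g$ at infinity is not available — as you yourself half-noticed — so that step would need justification it cannot easily get. Moreover, by invoking Corollary \ref{cor:sim} and the uniform asymptotics \eqref{eq:uniform} you smuggle in hypotheses the lemma does not make: Lemma \ref{lemma45} assumes neither \eqref{eq:estim} nor any growth restriction on $j$ (it allows an arbitrary integer $j=j(t)$), whereas your uniformity argument needs $j(t)=o(t^{(\alpha-\beta)/(\alpha-\beta+1)})$ and the two-term bound on $V$. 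Deleting the entire splitting argument and replacing it by the one-line monotonicity bound fixes the proof and restores the full generality of the statement.
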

\begin{lemma}\label{lemma46}
Suppose \eqref{eq:estim} and let $j=j(t)=o(t^{\frac{\alpha-\beta}{\alpha -\beta +1}})$ as $t \to \infty$. Then
$$
\int_{(t,\, \infty)} \eee^{t-y} { \rm d} V_j(y) = O \left( V_{j-1}(t) \right), \quad t\to \infty.
$$
\end{lemma}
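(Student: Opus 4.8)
The plan is to reduce the integral over $(t,\infty)$ to a sum of blocks over which the increments of $V_j$ are controlled by the asymptotics from Corollary \ref{cor:sim}, and then to sum a geometrically decaying series produced by the exponential weight $\eee^{t-y}$. Concretely, I would split the range of integration as $(t,\infty)=\bigcup_{k\geq 0}(t+k,\,t+k+1]$ and write
$$
\int_{(t,\,\infty)}\eee^{t-y}\,{\rm d}V_j(y)=\sum_{k\geq 0}\int_{(t+k,\,t+k+1]}\eee^{t-y}\,{\rm d}V_j(y)\leq \sum_{k\geq 0}\eee^{-k}\bigl(V_j(t+k+1)-V_j(t+k)\bigr).
$$
So everything comes down to bounding the increment $V_j(t+k+1)-V_j(t+k)$ in a way that, after multiplication by $\eee^{-k}$, is summable in $k$ and of order $V_{j-1}(t)$.

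The main tool here is Corollary \ref{cor:sim}: since $j=j(t)=o(t^{(\alpha-\beta)/(\alpha-\beta+1)})$, relation \eqref{eq:uniform} gives, for any fixed $\gamma\in(0,1)$,
$$
\sup_{y\geq \gamma t}\Bigl|\frac{V_j(y)}{\rho_j y^{\alpha j}}-1\Bigr|\longrightarrow 0,\qquad t\to\infty.
$$
Fix such a $\gamma$, say $\gamma=1/2$. For the finitely many ``near'' blocks with, say, $k\leq t$, both endpoints $t+k$ and $t+k+1$ lie in the region $y\geq \gamma t$, so $V_j(t+k+1)-V_j(t+k)\leq (1+o(1))\rho_j\bigl((t+k+1)^{\alpha j}-(t+k)^{\alpha j}\bigr)$, and I would bound $(t+k+1)^{\alpha j}-(t+k)^{\alpha j}$ crudely using that $x\mapsto x^{\alpha j}$ has derivative $\alpha j\,x^{\alpha j-1}$, giving an increment of order $\alpha j\,(t+k)^{\alpha j-1}\leq \alpha j\,(t+k)^{\alpha j}/t$. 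Summing $\eee^{-k}\alpha j\,(t+k)^{\alpha j}/t$ over $k\geq 0$ produces a factor $\alpha j\,t^{\alpha j-1}\sum_{k\ge0}\eee^{-k}(1+k/t)^{\alpha j}$; since $j=o(t^{\cdots})=o(t)$, the sum $\sum_k \eee^{-k}(1+k/t)^{\alpha j}$ stays bounded (each term is dominated by $\eee^{-k}\eee^{\alpha j k/t}$ with $\alpha j/t\to 0$), so this part is $O(j\,t^{\alpha j-1})=O(\rho_j\,j\,t^{\alpha j}/(\rho_j t))$. Comparing with $V_{j-1}(t)\sim \rho_{j-1}t^{\alpha(j-1)}$ from \eqref{eq:sim} and using $\rho_j/\rho_{j-1}=C\Gamma(\alpha+1)\Gamma(\alpha(j-1)+1)/\Gamma(\alpha j+1)\sim C\Gamma(\alpha+1)(\alpha j)^{-\alpha}$ by \eqref{eq:ratio}, one checks $\rho_j\,j\,t^{\alpha j}/(\rho_j t)=j\,t^{\alpha j-1}$ is indeed $O(\rho_{j-1}t^{\alpha(j-1)})=O(V_{j-1}(t))$ — the ratio is a constant multiple of $j^{1+\alpha}/t$ which tends to $0$, comfortably within $O(\cdot)$. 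For the ``far'' blocks $k>t$, I would use the subadditivity-type bound $V_j(t+k+1)-V_j(t+k)\leq V_j(k+1)\leq \tfrac{331}{100}\rho_j(k+1)^{\alpha j}$ from \eqref{ineq:vj} (valid once \eqref{ineq:condition} holds, which it does for $k$ large), so this contributes at most $\tfrac{331}{100}\rho_j\sum_{k>t}\eee^{-k}(k+1)^{\alpha j}$; since $j=o(t)$, for $k>t$ we have $(k+1)^{\alpha j}\leq \eee^{\alpha j\log(k+1)}\leq \eee^{k/2}$ eventually, so this tail is $O(\rho_j\,\eee^{-t/2})$, which is super-polynomially small relative to $V_{j-1}(t)$ and hence negligible.

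The step I expect to be the genuine obstacle is making the ``near'' estimate uniform in $k$ over the growing range $0\leq k\leq t$: the convergence in \eqref{eq:uniform} is for the supremum over $y\geq \gamma t$, which already covers all these endpoints at once, so the $(1+o(1))$ factor is uniform in $k$ — good — but one must be careful that the elementary bound $(t+k+1)^{\alpha j}-(t+k)^{\alpha j}\leq \alpha j(t+k)^{\alpha j-1}$ combined with the weights $\eee^{-k}$ really does produce a convergent sum with the claimed order, i.e.\ that the interplay between the exponential decay $\eee^{-k}$ and the polynomial-in-$k$ growth $(1+k/t)^{\alpha j}$ is controlled uniformly as $j,t\to\infty$ along the prescribed regime $j=o(t^{(\alpha-\beta)/(\alpha-\beta+1)})$. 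Since $(\alpha-\beta)/(\alpha-\beta+1)<1$, in particular $j=o(t)$, and $\eee^{-k}(1+k/t)^{\alpha j}\leq \eee^{-k+\alpha j k/t}\leq \eee^{-k/2}$ for $t$ large, so $\sum_k\eee^{-k}(1+k/t)^{\alpha j}=O(1)$; once this is nailed down the rest is the routine comparison of $\rho_j$ and $\rho_{j-1}$ via \eqref{eq:ratio} carried out above, and the proof concludes by combining the near and far estimates.
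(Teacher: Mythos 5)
There is a genuine gap at the ``near blocks'' step, and it is not repairable with the estimates available in the paper. From \eqref{eq:uniform} you only know $\sup_{y\geq \gamma t}|V_j(y)/(\rho_j y^{\alpha j})-1|\leq \varepsilon_t$ with $\varepsilon_t\to 0$; this controls \emph{values} of $V_j$, not increments. The correct deduction for a unit block is
\[
V_j(t+k+1)-V_j(t+k)\;\leq\;\rho_j\bigl((t+k+1)^{\alpha j}-(t+k)^{\alpha j}\bigr)+\varepsilon_t\,\rho_j\bigl((t+k+1)^{\alpha j}+(t+k)^{\alpha j}\bigr),
\]
so besides the main term (which you estimate correctly, up to a harmless slip: the ratio to $V_{j-1}(t)$ is of order $(j/t)^{1-\alpha}$, not $j^{1+\alpha}/t$) there is an additive error of order $\varepsilon_t\,\rho_j t^{\alpha j}$ per block, hence of order $\varepsilon_t\,\rho_j t^{\alpha j}\sim\varepsilon_t V_j(t)$ after summing against $\eee^{-k}$. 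The target $V_{j-1}(t)\sim\rho_{j-1}t^{\alpha(j-1)}$ is smaller than $V_j(t)$ by a factor of order $(j/t)^{\alpha}$ (by \eqref{def:fi} and \eqref{eq:ratio}), so you would need $\varepsilon_t=O((j/t)^{\alpha})$, i.e.\ a quantitative rate. The rate actually available, from \eqref{ineq:2jb1}, gives an additive error of order $DC^{j-1}j\,(\Gamma(\alpha+1))^{j-1}\Gamma(\beta+1)\,t^{\alpha(j-1)+\beta}/\Gamma(\alpha(j-1)+\beta+1)$, whose ratio to $V_{j-1}(t)$ is of order $j^{1-\beta}t^{\beta}\to\infty$; and the requirement $\varepsilon_t=O((j/t)^\alpha)$ is not implied by $j=o(t^{(\alpha-\beta)/(\alpha-\beta+1)})$ (for $\beta=0$ it would force $j=O(1)$). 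So the key inequality you write for the near blocks is unjustified, and in fact proving that unit-window increments of $V_j$ are $O(V_{j-1}(t))$ is essentially the content of the lemma itself — it cannot be extracted from the sandwich bounds for $V_j$ alone. (Your far-block estimate is fine, and the claimed subadditivity of $V_j$ is not even needed there, since $V_j(t+k+1)-V_j(t+k)\leq V_j(t+k+1)$ together with \eqref{ineq:vj} and the factor $\eee^{-k}$ suffices.)

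The paper's proof avoids this obstacle by exploiting the convolution structure \eqref{Vj:convolution} before using any asymptotics: writing $h(t)=\eee^{t}\1_{(-\infty,0]}(t)$ and using the decomposition (from Iksanov--Marynych--Samoilenko) $\int_{(t,\infty)}h(t-y)\,{\rm d}V_j(y)=\int_{[0,t]}h_1(t-y)\,{\rm d}V_{j-1}(y)+\int_{(t,\infty)}h_2(t-y)\,{\rm d}V_{j-1}(y)$ with $h_1=\int_{(t,\infty)}h(t-y)\,{\rm d}V(y)$ bounded and $h_2(t)=\kappa\eee^{t}$ for $t\leq 0$, it reduces everything to integrals against ${\rm d}V_{j-1}$. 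After integration by parts the dangerous term becomes $C_j(t)=-V_{j-1}(t)+\int_t^\infty\eee^{t-y}V_{j-1}(y)\,{\rm d}y$, where the main contributions cancel and only the \emph{relative} $o(1)$ precision of \eqref{eq:uniform}--\eqref{eq:sim} applied to $V_{j-1}$ is required. If you want to keep your block decomposition, you would first have to perform an analogous reduction from $V_j$ to $V_{j-1}$; as written, your argument does not close.
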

\begin{proof}
Let  $h: \mathbb{R} \to [0,\infty)$ be a dRi function on $\mr$ satisfying $h(t) = 0$ for $t>0$. We start as in the proof of Lemma 4.6
in \cite{Iksanov+Marynych+Samoilenko:2022}: for $t\geq 0$,
\begin{equation*}
\int_{(t,\,\infty)} h(t-y) {\rm d} V_j(y) = 
\int_{[0,\,t]} h_1(t-y) {\rm d} V_{j-1}(y) + \int_{(t,\,\infty)} h_2(t-y) {\rm d} V_{j-1}(y),
\end{equation*}
where $h_1(t) = \int_{(t,\,\infty)}h(t-y){\rm d} V(y)$ and $h_2(t) = \int_{[
0,\,\infty)}h(t-y){\rm d}V(y)$ for $t \in \mathbb{R}$. By Lemma A.1 in \cite{Iksanov+Marynych+Samoilenko:2022}, $h_1(t)\leq b$ for some $b>0$ and all $t\geq 0$, whence
$$
\int_{[0,\,t]} h_1(t-y) {\rm d} V_{j-1}(y) = O \left( V_{j-1}(t) \right), \quad t\to \infty.
$$
Now we put $h(t) = \eee^t \1_{(-\infty,0]} (t)$ and note that all the formulae given in the preceding part of the present proof hold for this $h$.
Plainly, $h_2(t) = \eee^t \int_{[0,\,\infty)} \eee^{-y} {\rm d} V(y)= \kappa \eee^t$ for $t \leq 0$, where $\kappa:= \me \eee^{-\eta} (1- \me \eee^{-\xi})^{-1}$. Integrating by parts yields
\begin{multline*}
\int_{(t,\,\infty)} h_2(t-y) {\rm d} V_{j-1}(y)\\=\kappa \int_{(t,\,\infty)} \eee^{t-y} {\rm d} V_{j-1}(y) = -\kappa V_{j-1}(t) + \kappa\int_t^\infty \eee^{t-y} V_{j-1}(y) {\rm d} y=: \kappa C_j(t)
\end{multline*}
for $t \geq 0$. By Corollary \ref{cor:sim}, \eqref{eq:estim} together with our choice of $j=j(t)$ ensures \eqref{eq:uniform}. In view of \eqref{eq:uniform}, given $\varepsilon >0$ we obtain,
for large enough $t$,
\begin{multline*}
0 \leq C_j(t) \leq -V_{j-1}(t) + (1 + \varepsilon)\rho_{j-1}\int_t^\infty \eee^{t-y} y^{\alpha(j-1)} {\rm d} y =-V_{j-1}(t)\\+(1+\varepsilon)(C \Gamma(\alpha+1))^{j-1}\Big(\sum_{k=0}^{\lfloor \alpha(j-1)\rfloor} \frac{t^{\alpha(j-1)-k}}{\Gamma(\alpha(j-1)+1-k)}+\frac{1}{\Gamma(\{\alpha(j-1)\})}\int_t^\infty \eee^{t-y}y^{\{\alpha(j-1)\}-1}{\rm d}y \Big),
\end{multline*}
where $\{x\}$ denotes the fractional part of $x\in\mr$. The expression in the parantheses is asymptotically equivalent to $t^{\alpha(j-1)}/\Gamma(\alpha(j-1)+1)$ whenever $j(t)=o(t)$ as $t\to\infty$. Hence, recalling that, according to \eqref{eq:sim} (which holds according to Corollary \ref{cor:sim}), $V_{j-1}(t)\sim \rho_{j-1}t^{\alpha(j-1)}$ as $t\to\infty$ we conclude that ${\lim\sup}_{t\to\infty}(C_j(t)/V_{j-1}(t))\leq \varepsilon$ and thereupon $C_j(t)=o(V_{j-1}(t))$ as $t\to\infty$ upon sending $\varepsilon \to 0+$.
\end{proof}
\begin{lemma}\label{lem:aux}
Suppose \eqref{eq:estim} and let $j=j(t) \to \infty$ and $j(t) = o(t^{\frac{\alpha-\beta}{\alpha -\beta +1}})$ as $t \to \infty$. Then
$$\lim_{s \to \infty} \limsup_{t\to \infty}  \frac{j^\alpha}{\rho_{ j - 1} t^{\alpha j}} \int_{(st/j,\,t]} y^\alpha  {\rm d } \big( -V_{j-1}(t-y)\big)= 0.$$
\end{lemma}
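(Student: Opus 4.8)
The plan is to move the differential off the integrator by integration by parts and then invoke the uniform asymptotics of $V_{j-1}$ supplied by Corollary~\ref{cor:sim}. Write $j=j(t)$ and observe that, since $j\to\infty$ and $j=o(t^{(\alpha-\beta)/(\alpha-\beta+1)})$, the sequence $j-1$ also satisfies the hypotheses of Corollary~\ref{cor:sim}, so that \eqref{eq:uniform} holds with $j$ replaced by $j-1$ and an arbitrary $\gamma>0$. First I would substitute $z=t-y$, which rewrites the integrator as ${\rm d}V_{j-1}(z)$ on $z\in[0,t-st/j)$, and then integrate by parts; since $z\mapsto(t-z)^\alpha$ is continuous and $V_{j-1}(0)=0$, this yields
$$
\int_{(st/j,\,t]}y^\alpha\,{\rm d}_y\big(-V_{j-1}(t-y)\big)\le\Big(\tfrac{st}{j}\Big)^\alpha V_{j-1}\Big(t-\tfrac{st}{j}\Big)+\alpha\int_0^{t-st/j}V_{j-1}(z)(t-z)^{\alpha-1}\,{\rm d}z .
$$
It then suffices to show that the $\limsup_{t\to\infty}$ of the right-hand side, multiplied by $j^\alpha/(\rho_{j-1}t^{\alpha j})$, tends to $0$ as $s\to\infty$.

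Fix $\varepsilon>0$. For the boundary term I would note that $t-st/j=t(1-s/j)\ge t/2$ for all large $t$, so \eqref{eq:uniform} with $\gamma=1/2$ gives $V_{j-1}(t-st/j)\le(1+\varepsilon)\rho_{j-1}\big(t(1-s/j)\big)^{\alpha(j-1)}$ eventually; hence $\frac{j^\alpha}{\rho_{j-1}t^{\alpha j}}(st/j)^\alpha V_{j-1}(t-st/j)\le(1+\varepsilon)s^\alpha(1-s/j)^{\alpha(j-1)}$, and the last expression converges to $(1+\varepsilon)s^\alpha\eee^{-\alpha s}$ as $t\to\infty$ because $j\to\infty$. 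For the integral term I would split $\int_0^{t-st/j}=\int_0^{\delta t}+\int_{\delta t}^{t-st/j}$ with a fixed $\delta\in(0,1)$. On $[0,\delta t)$ the bounds $V_{j-1}(z)\le V_{j-1}(\delta t)\le(1+\varepsilon)\rho_{j-1}(\delta t)^{\alpha(j-1)}$ and $(t-z)^{\alpha-1}\le\big((1-\delta)t\big)^{\alpha-1}$ show that this piece, after multiplication by $\alpha j^\alpha/(\rho_{j-1}t^{\alpha j})$, is at most a constant (depending on $\alpha$ and $\delta$) times $j^\alpha\delta^{\alpha(j-1)+1}$, which tends to $0$ as $t\to\infty$. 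This is the one delicate step: the uniform estimate \eqref{eq:uniform} is only available on $\{z\ge\gamma t\}$, so the range where $t-y=o(t)$ must be controlled by the crude monotonicity bound $V_{j-1}(z)\le V_{j-1}(\delta t)$, and the estimate survives precisely because $\delta$ is kept fixed, so the geometric factor $\delta^{\alpha(j-1)}$ overrides the polynomial factor $j^\alpha$.

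On $[\delta t,t-st/j)$ we have $z\ge\delta t$, so \eqref{eq:uniform} yields $V_{j-1}(z)\le(1+\varepsilon)\rho_{j-1}z^{\alpha(j-1)}$, and the change of variable $w=j(1-z/t)$ converts $\frac{\alpha j^\alpha}{\rho_{j-1}t^{\alpha j}}\int_{\delta t}^{t-st/j}V_{j-1}(z)(t-z)^{\alpha-1}\,{\rm d}z$ into $\alpha(1+\varepsilon)\int_s^{j(1-\delta)}(1-w/j)^{\alpha(j-1)}w^{\alpha-1}\,{\rm d}w$. Since $(1-w/j)^{\alpha(j-1)}\le\eee^{-\alpha w/2}$ for $j\ge2$ (by $\log(1-x)\le-x$), the function $w^{\alpha-1}\eee^{-\alpha w/2}\1_{[s,\infty)}(w)$ is an integrable majorant, so by dominated convergence this quantity tends to $\alpha(1+\varepsilon)\int_s^\infty\eee^{-\alpha w}w^{\alpha-1}\,{\rm d}w$ as $t\to\infty$. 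Collecting the three contributions and then letting $\varepsilon\to0+$,
$$
\limsup_{t\to\infty}\frac{j^\alpha}{\rho_{j-1}t^{\alpha j}}\int_{(st/j,\,t]}y^\alpha\,{\rm d}_y\big(-V_{j-1}(t-y)\big)\le s^\alpha\eee^{-\alpha s}+\alpha\int_s^\infty\eee^{-\alpha w}w^{\alpha-1}\,{\rm d}w ,
$$
and the right-hand side, being the sum of $s^\alpha\eee^{-\alpha s}$ and the tail of the convergent integral $\int_0^\infty\eee^{-\alpha w}w^{\alpha-1}\,{\rm d}w$, tends to $0$ as $s\to\infty$. Apart from the handling of the non-asymptotic region just described, every step above is a routine application of Corollary~\ref{cor:sim} and of dominated convergence.
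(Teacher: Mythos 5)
Your argument is correct, and its skeleton coincides with the paper's: both proofs integrate by parts, treat the boundary term identically via \eqref{eq:uniform} (giving the limit $s^\alpha\eee^{-\alpha s}$), and identify the main contribution of the remaining integral as $\int_s^\infty \eee^{-\alpha y}y^{\alpha-1}{\rm d}y$ by the change of variable $w=j(1-z/t)$ and dominated convergence. The genuine difference lies in how the non-asymptotic part of the integral is controlled. The paper does not split the range: it inserts the full two-term estimate \eqref{ineq:first} of Lemma \ref{lem:estimate} for $V_{j-1}(t-y)$ on all of $(st/j,t]$, and then kills the error sum by the crude bound $y^{\alpha-1}\leq (st/j)^{\alpha-1}$, the binomial bound \eqref{choose}, Lemma \ref{gammainequality} and a geometric-series estimate, which is where the hypothesis $j(t)=o(t^{(\alpha-\beta)/(\alpha-\beta+1)})$ is used a second time. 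You instead split at $z=\delta t$, use only the already-established uniform relation \eqref{eq:uniform} of Corollary \ref{cor:sim} on $\{z\geq\delta t\}$, and dispose of $\{z<\delta t\}$ by monotonicity of $V_{j-1}$ together with the factor $j^\alpha\delta^{\alpha(j-1)+1}\to 0$; the growth hypothesis on $j(t)$ then enters only through Corollary \ref{cor:sim}. Your route is somewhat cleaner at this step, since it avoids re-manipulating the combinatorial error sum, at the price of an extra parameter $\delta$ and an $\varepsilon$-bookkeeping step; the paper's computation, while heavier, yields an explicit quantitative bound for the edge region rather than a mere $o(1)$ statement. All the delicate points in your write-up (applicability of \eqref{eq:uniform} at $t(1-s/j)\geq t/2$ and on $\{z\geq \delta t\}$, the integrable majorant $w^{\alpha-1}\eee^{-\alpha w/2}$, and the final passage $\varepsilon\to 0+$, $s\to\infty$) are handled correctly.
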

\begin{proof}
Integrating by parts we conclude that
\begin{multline*}
\frac{j^\alpha}{\rho_{j-1} t^{\alpha j}} \int_{(st/j,\, t]} y^\alpha  {\rm d } \big( -V_{j-1}(t-y) \big)= \frac{s^\alpha V_{j-1}(t(1-s/j))}{\rho_{j-1} t^{\alpha(j-1)}}\\+
\frac{\alpha j^\alpha}{\rho_{j-1}t^{\alpha j}}\int_{st/j}^t V_{j-1}(t-y) y^{\alpha-1}{\rm d}y.
\end{multline*}
In view of \eqref{eq:uniform} which holds true according to Corollary \ref{cor:sim},
$$
\frac{s^\alpha V_{j-1}(t(1-s/j))}{\rho_{j-1} t^{\alpha (j-1)}}~\sim~s^\alpha  (1-s/j)^{\alpha(j-1)}~\to~ s^\alpha \eee^{-\alpha s},\quad t\to \infty.
$$
The right-hand side converges to $0$ as $s\to\infty$. Further, we infer with the help of \eqref{ineq:first} that
\begin{multline}\label{ineq:lemma5}
\frac{j^\alpha}{\rho_{j-1}t^{\alpha j}}\int_{st/j}^t V_{j-1}(t-y) y^{\alpha-1} {\rm d} y \leq  \frac{j^\alpha}{\rho_{j-1}t^{\alpha j}}\biggl(\int_{st/j}^t \rho_{j-1} (t-y)^{\alpha(j-1)} y^{\alpha-1}{\rm d}y\\
+\int_{st/j}^t\sum_{i=0}^{j-2} {j-1 \choose i}\frac{(\Gamma(\alpha+1))^i (\Gamma(\beta+1))^{j-i-1}}{\Gamma(\alpha i+\beta (j-i-1)+1)}(C(t-y)^\alpha)^i(D(t-y)^\beta)^{j-i-1} y^{\alpha-1} {\rm d}y\biggr).
\end{multline}
Changing the variable we obtain for the first term
\begin{multline*}
\frac{j^\alpha}{t^{\alpha j}} \int_{st/j}^t  (t-y)^{\alpha(j-1)} y^{\alpha-1} {\rm d}y= \int_s^j(1-y/j)^{\alpha(j-1)}y^{\alpha-1}{\rm d}y~ \to~ \int_s^\infty \eee^{-\alpha y}  y^{\alpha-1}{\rm d}y,\quad t\to\infty
\end{multline*}
having utilized the Lebesgue dominated convergence theorem for the limit relation. The right-hand side
converges to $0$ as $s \to \infty$. Invoking $y^{\alpha-1} \leq (st/j)^{\alpha-1}$ for $y\in [st/j, t]$ and then evaluating the remaining integral we conclude that the second term in \eqref{ineq:lemma5} does not exceed
\begin{multline*}
\frac{j^\alpha }{\rho_{ j - 1} t^{\alpha j}}\left(\frac{st}{j} \right)^{\alpha-1}  \sum_{i=0}^{j-2} {j-1 \choose i}   \frac{(C\Gamma(\alpha+1))^i (D\Gamma(\beta+1))^{j-i-1}}{\Gamma(\alpha i+\beta (j-i-1)+1)}
\frac{ ( t- st/j )^{\alpha i + \beta(j-i-1) + 1}}{\alpha i + \beta(j-i-1) + 1}\\= s^{\alpha-1}j \sum_{i=0}^{j-2} {j-1 \choose i} \frac{\Gamma(\alpha(j-1) + 1)}{\Gamma(\alpha i + \beta(j-i-1) + 1)} \left( \frac{D \Gamma(\beta+1) t^\beta}{C \Gamma(\alpha + 1) t^\alpha} \right)^{j-i-1}\frac{( 1- s/j )^{\alpha i + \beta(j-i-1) + 1}}{\alpha i + \beta(j-i-1) + 1}\\\leq \frac{s^{\alpha - 1}}{\beta}  \sum_{i=0}^{j-2} {j-1 \choose i} \frac{\Gamma(\alpha(j-1) + 1)}{\Gamma(\alpha i + \beta(j-i-1) + 1)} \left( \frac{D \Gamma(\beta+1) t^\beta}{C \Gamma(\alpha + 1) t^\alpha} \right)^{j-i-1}\\\leq \frac{ s^{\alpha - 1}}{ \beta}\sum_{i=0}^{j-2} \left((j-1) (\alpha(j-1) + 1)^{\alpha -\beta} \frac{ D \Gamma(\beta+1)}{C \Gamma(\alpha + 1) t^{\alpha-\beta}} \right)^{j-i-1}\\ \leq \frac{ s^{\alpha - 1} }{ \beta} \frac{ D \Gamma(\beta+1)}{C \Gamma(\alpha + 1)}(j-1) \Big(\frac{\alpha(j-1) + 1}{t}\Big)^{\alpha -\beta} \left(1-\frac{ D \Gamma(\beta+1)}{C \Gamma(\alpha + 1)}(j-1) \Big(\frac{\alpha(j-1) + 1}{t}\Big)^{\alpha -\beta}\right)^{-1}\\~\to~0,\quad t\to\infty.
\end{multline*}
We have used $0<(1-s/j)^{\alpha i+\beta(j-i-1)+1}\leq 1$ and $\frac{1}{\alpha i+\beta(j-i-1)+1}\leq \frac{1}{\beta j}$ for the first inequality, formula \eqref{choose} and Lemma \ref{gammainequality} for the second. Our assumption $j(t) = o(t^{\frac{\alpha-\beta}{\alpha -\beta +1}})$ as $t \to \infty$ entails $\lim_{t\to\infty} (j(t)-1) ((\alpha(j(t)-1) + 1)/t)^{\alpha -\beta}=0$, thereby justifying the limit relation.
\end{proof}

A major part of the proof of Theorem \ref{main} is covered by Theorems \ref{theorem2} and \ref{theorem3} given next.
\begin{theorem}\label{theorem2}
Suppose \eqref{eq:estim} and let $j=j(t) \to \infty $ and $j(t) = o(t^{\frac{\alpha-\beta}{\alpha -\beta +1}})$ as $t \to \infty$. Then
$$
\left(\frac{c (j(t))^\alpha}{\rho_{\lfloor j(t) u \rfloor - 1} t^{\alpha \lfloor j(t) u \rfloor}}\sum_{r \geq 1} V_{\lfloor j(t) u \rfloor - 1}(t - T_r) \1_{\{T_r\leq t\}}\right)_{u>0}~\overset{{\rm f.d.d.}}{\longrightarrow}~ \left(\int_{[0,\,\infty)}\eee^{-\alpha u y} {\rm d} \mathcal{Z}_\alpha^\leftarrow(y) \right)_{u>0}, \quad t \to \infty,
$$
where $\rho_n$ is as given in \eqref{def:fi}, and $\mathcal{Z}_\alpha^\leftarrow$ is an inverse $\alpha$-stable subordinator.
\end{theorem}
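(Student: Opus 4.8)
The plan is to reduce the finite-dimensional convergence of the random field $(X_t(u))_{u>0}$, where $X_t(u)$ denotes the normalized sum $\frac{c(j(t))^\alpha}{\rho_{\lfloor j(t)u\rfloor-1}t^{\alpha\lfloor j(t)u\rfloor}}\sum_{r\geq 1}V_{\lfloor j(t)u\rfloor-1}(t-T_r)\1_{\{T_r\leq t\}}$, to a known weak-convergence statement for the first-level occupancy counts, which are governed by the renewal-type process $N(t)=\sum_{k\geq1}\1_{\{T_k\leq t\}}$. The first step is to fix points $0<u_1<\cdots<u_d$ and to rewrite each $X_t(u_\ell)$ as a Lebesgue--Stieltjes integral against the random measure induced by $r\mapsto T_r$; more precisely, writing $j=j(t)$ and using Corollary \ref{cor:sim} (the asymptotics $V_j(t)\sim\rho_j t^{\alpha j}$ together with the uniform version \eqref{eq:uniform}), I would replace $V_{\lfloor ju_\ell\rfloor-1}(t-T_r)$ by its asymptotic equivalent $\rho_{\lfloor ju_\ell\rfloor-1}(t-T_r)^{\alpha(\lfloor ju_\ell\rfloor-1)}$ on the bulk region $T_r\leq t(1-s/j)$, so that, after the change of variable $y=T_r$, $X_t(u_\ell)$ is approximately $\frac{c j^\alpha}{t^{\alpha j}}\int_{[0,\,t]}(t-y)^{\alpha(\lfloor ju_\ell\rfloor-1)}\,{\rm d}N(y)$, and then $(1-y/t)^{\alpha(\lfloor ju_\ell\rfloor-1)}\approx\eee^{-\alpha u_\ell(jy/t)}$ for $y$ of order $t/j$.

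The second step is the core probabilistic input: I would invoke the known first-level limit theorem (the specialization to $j=1$ of Theorem 7 of \cite{Braganets+Iksanov:2023}, or equivalently the statement quoted in the discussion after Theorem \ref{main}), which says that, under \eqref{eq:standing assumption}, $(c\,t^{-\alpha}N_\cdot)$ scaled appropriately converges to an inverse $\alpha$-stable subordinator. Concretely, the counting process $N$ localized near the origin at scale $t/j$ and normalized by $c(j/t)^\alpha$ converges weakly (in a suitable functional sense on $[0,\infty)$, e.g.\ Skorokhod $M_1$ or via pointwise convergence at continuity points plus tightness) to $\mathcal{Z}_\alpha^\leftarrow$. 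This is where the regular variation $\mmp\{\xi>t\}\sim ct^{-\alpha}$ with $\alpha\in(0,1)$ enters decisively, and where the constant $\Gamma(1-\alpha)$ in the definition of $\mathcal Z_\alpha$ is produced. Given this, the continuous-mapping/integration-by-parts argument gives $\frac{cj^\alpha}{t^{\alpha j}}\int_{[0,\,t]}(t-y)^{\alpha(\lfloor ju_\ell\rfloor-1)}\,{\rm d}N(y)=\frac{cj^\alpha}{t^{\alpha j}}\int_{[0,\,t]}y^\alpha\,{\rm d}_y(-V_{\lfloor ju_\ell\rfloor-1}(t-y))$-type expressions converging to $\int_{[0,\,\infty)}\eee^{-\alpha u_\ell y}\,{\rm d}\mathcal Z_\alpha^\leftarrow(y)$; the exponential integrand arises exactly from the pointwise limit $(1-y/j)^{\alpha(j-1)}\to\eee^{-\alpha y}$ noted in the remarks following Theorem \ref{main}.

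The third step is to control the error terms so that the approximation in the first step is legitimate uniformly in $t$. There are two sources of error: the truncation of the tail region $T_r>t(1-s/j)$, and the replacement of $V_{\lfloor ju_\ell\rfloor-1}$ by its power asymptotics. The tail contribution is handled precisely by Lemma \ref{lem:aux}, which gives $\lim_{s\to\infty}\limsup_{t\to\infty}\frac{j^\alpha}{\rho_{j-1}t^{\alpha j}}\int_{(st/j,\,t]}y^\alpha\,{\rm d}_y(-V_{j-1}(t-y))=0$; applied with $j$ replaced by $\lfloor ju_\ell\rfloor$ (which still satisfies $o(t^{(\alpha-\beta)/(\alpha-\beta+1)})$ since $u_\ell$ is fixed) this shows the tail is asymptotically negligible after first letting $t\to\infty$ and then $s\to\infty$. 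The bulk replacement error is controlled by \eqref{eq:uniform} of Corollary \ref{cor:sim}, which makes the relative error uniformly small on $\{y\leq t(1-s/j)\}$. For the multivariate statement I would then use the standard device: convergence of each coordinate plus convergence of arbitrary linear combinations $\sum_\ell a_\ell X_t(u_\ell)$ — each such combination is again an integral $\int(\sum_\ell a_\ell\eee^{-\alpha u_\ell y})\,{\rm d}\mathcal Z_\alpha^\leftarrow(y)$ against the \emph{same} limiting $\mathcal Z_\alpha^\leftarrow$, so joint convergence follows from the single functional convergence of the localized $N$ together with the continuity of the map $\mu\mapsto\int g\,{\rm d}\mu^\leftarrow$ for the relevant continuous bounded-variation integrands $g$.

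The main obstacle I anticipate is the uniform-in-$t$ interchange of limits needed to pass from the functional convergence of the localized renewal process to convergence of the integral functionals when the integrand itself depends on $t$ through $j(t)$ (the exponent $\alpha(\lfloor j(t)u\rfloor-1)$ is not fixed). One cannot directly apply the continuous mapping theorem because $(t-y)^{\alpha(\lfloor ju\rfloor-1)}/t^{\alpha(\lfloor ju\rfloor-1)}$ is a moving target; the resolution is to first establish the explicit pointwise bound $\big|(1-y/j)^{\alpha(j-1)}-\eee^{-\alpha y}\big|\leq$ (something summable/integrable against the limit measure, uniformly in large $j$) on compacts in $y$, dominate on the tail via Lemma \ref{lem:aux}, and only then invoke the first-level limit theorem combined with a Slutsky-type argument. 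This is precisely the kind of three-$\varepsilon$ argument (localize, approximate integrand, apply weak convergence, send auxiliary parameters to their limits) that the preparatory Lemmas \ref{lemma45}, \ref{lemma46} and \ref{lem:aux} were designed to support, so the execution should be routine but requires care in the order of limits.
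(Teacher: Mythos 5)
Your proposal follows essentially the same route as the paper's proof: integrate by parts so that $N$ becomes the integrand, rescale at the spatial scale $t/j$, invoke the functional limit theorem for $N$ (the paper cites part (B.4) of Theorem 3.2 in Alsmeyer--Iksanov--Marynych rather than the first-level specialization you mention, but the content is the same), approximate the $t$-dependent integrator via \eqref{eq:uniform}, dispose of the tail $(st/j,t]$ with Lemma \ref{lem:aux} and Markov's inequality, and obtain joint convergence by Cram\'{e}r--Wold through the common limit $\mathcal{Z}_\alpha^\leftarrow$. The one step you leave informal --- the interchange of limits caused by the moving exponent $\alpha(\lfloor j(t)u\rfloor-1)$ --- is exactly what the paper resolves by the Skorokhod representation theorem along subsequences combined with the deterministic convergence result of Lemma \ref{lemma6}.
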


Our proof of Theorem \ref{theorem2} uses an auxiliary technical result, which is a slight reformulation of Lemma A.5 in \cite{IKSANOV2013}. Let $ \mathcal{D}$ denote the Skorokhod space of c\`{a}dl\`{a}g functions defined on $[0,\infty)$. We assume that the space $\mathcal{D}$ is endowed with the $J_1$-topology. Comprehensive information about the $J_1$-topology on $\mathcal{D}$ can be found in \cite{billingsley}.
\begin{lemma}\label{lemma6}
For each $k \in \mn$, let $y_k : [0,\infty) \to \mr
$ be a right-continuous bounded and nondecreasing function. Assume that $\lim_{k\to \infty} x_k = x$
on $\mathcal{D}$ and that, for each $t\geq 0$, $\lim_{k\to \infty}y_k(t) = y(t)$, where $y : [0,\infty) \to \mr
$ is a bounded continuous function. Then, for all $a,b\geq 0$, a<b,
$$\lim_{k\to \infty} \int_{[a,\,b]} x_k(t) {\rm d} y_k(t) = \int_{[a,\,b]} x(t) {\rm d} y(t).$$
\end{lemma}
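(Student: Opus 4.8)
The plan is to read the claim as a continuity statement for the Lebesgue--Stieltjes integration map $(x,\nu)\mapsto\int_{[a,\,b]}x\,{\rm d}\nu$, obtained by combining a P\'olya-type uniformisation of the monotone integrators with the time-change description of $J_1$-convergence of the integrands. Write $\mu_k:={\rm d}y_k$ and $\mu:={\rm d}y$ for the associated finite Borel measures (so $\mu_k((s,t])=y_k(t)-y_k(s)$, and similarly for $\mu$). Two structural facts drive everything: since $y$ is continuous, $\mu$ is nonatomic; and the discontinuity set $D$ of the c\`{a}dl\`{a}g function $x$ is at most countable, whence $\mu(D)=0$. Moreover $x$, being c\`{a}dl\`{a}g, is bounded on $[a,b]$.

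First I would promote the pointwise convergence $y_k\to y$ to uniform convergence on $[0,b+1]$: given $\varepsilon>0$, uniform continuity of $y$ provides $0=u_0<\dots<u_N=b+1$ with $y(u_i)-y(u_{i-1})<\varepsilon$, and then $y_k(u_i)\to y(u_i)$ at the finitely many $u_i$ together with the sandwich $y_k(u_{i-1})\leq y_k(t)\leq y_k(u_i)$ for $t\in[u_{i-1},u_i]$ (monotonicity) gives $\sup_{t\in[0,b+1]}|y_k(t)-y(t)|\to0$. Consequently $\mu_k(J)\to\mu(J)$ for every subinterval $J\subseteq[a,b]$; since $\mu$ is nonatomic this yields $\mu_k\to\mu$ weakly on $[a,b]$, and, as $\mu_k([a,b])\to\mu([a,b])<\infty$, also $M:=\sup_k\mu_k([a,b])<\infty$. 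Now decompose
$$\int_{[a,\,b]}x_k\,{\rm d}\mu_k-\int_{[a,\,b]}x\,{\rm d}\mu=\int_{[a,\,b]}(x_k-x)\,{\rm d}\mu_k+\Big(\int_{[a,\,b]}x\,{\rm d}\mu_k-\int_{[a,\,b]}x\,{\rm d}\mu\Big).$$
The bracketed term tends to $0$: $x$ is bounded and $\mu$-almost everywhere continuous on $[a,b]$ (its discontinuities lie in $D\cup\{a,b\}$, a $\mu$-null set), so the standard convergence-of-integrals consequence of weak convergence applies.

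It remains to show $\int_{[a,\,b]}(x_k-x)\,{\rm d}\mu_k\to0$. Fix a continuity point $T>b$ of $x$. By the time-change characterisation of $J_1$-convergence on $\mathcal{D}[0,T]$ (see \cite{billingsley}) there are increasing homeomorphisms $\lambda_k$ of $[0,T]$ with $\|\lambda_k-{\rm id}\|_{[0,T]}\to0$ and $\|x_k\circ\lambda_k-x\|_{[0,T]}\to0$, equivalently $x_k=x\circ\lambda_k^{-1}+r_k$ on $[0,T]$ with $\|r_k\|_{[0,T]}\to0$; note $\|\lambda_k^{-1}-{\rm id}\|_{[0,T]}=\|\lambda_k-{\rm id}\|_{[0,T]}\to0$. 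Hence
$$\Big|\int_{[a,\,b]}(x_k-x)\,{\rm d}\mu_k\Big|\leq M\|r_k\|_{[0,T]}+\int_{[a,\,b]}\big|x(\lambda_k^{-1}(s))-x(s)\big|\,{\rm d}\mu_k(s),$$
and the first summand vanishes. For the remaining integral, fix $\varepsilon>0$; since $x$ is c\`{a}dl\`{a}g on $[a,b]$ there is a partition $a=s_0<\dots<s_m=b$ with oscillation of $x$ below $\varepsilon$ on each $[s_{i-1},s_i)$. As $\mu$ is nonatomic, choose $\delta>0$ so small that the $\delta$-neighbourhoods $I_0,\dots,I_m$ of $s_0,\dots,s_m$ are pairwise disjoint and $\sum_i\mu(\overline{I_i})<\varepsilon$. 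For $s\in[a,b]\setminus\bigcup_iI_i$, say $s\in(s_{i-1},s_i)$, once $k$ is large enough that $\|\lambda_k^{-1}-{\rm id}\|_{[0,T]}<\delta$ the point $\lambda_k^{-1}(s)$ still lies in $[s_{i-1},s_i)$, hence $|x(\lambda_k^{-1}(s))-x(s)|\leq\varepsilon$; on $\bigcup_iI_i$ one uses merely $|x(\lambda_k^{-1}(s))-x(s)|\leq2\sup_{[0,T]}|x|$. Since $\limsup_k\mu_k(\bigcup_iI_i)\leq\mu(\bigcup_i\overline{I_i})<\varepsilon$ (Portmanteau for closed sets), we get $\limsup_k\int_{[a,\,b]}|x(\lambda_k^{-1}(s))-x(s)|\,{\rm d}\mu_k(s)\leq\varepsilon M+2\varepsilon\sup_{[0,T]}|x|$; letting $\varepsilon\to0$ finishes the proof. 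The main obstacle is precisely this last integral: it is taken against the \emph{pre-limit} measures $\mu_k$, so nonatomicity of $\mu$ by itself does not prevent $\mu_k$ from concentrating mass near the jumps of $x$, and one must use the c\`{a}dl\`{a}g structure of $x$ (only finitely many jumps exceed $\varepsilon$, and the split points $s_i$ are shielded by the $I_i$) together with the weak convergence $\mu_k\to\mu$ to bound that mass uniformly in $k$.
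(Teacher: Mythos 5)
Your proof is correct. Note that the paper does not prove this lemma at all: it is used as an imported tool, stated as ``a slight reformulation of Lemma A.5 in \cite{IKSANOV2013}'', so there is no internal argument to compare against; your contribution is a self-contained verification. The route you take is the natural one and all the delicate points are handled properly: (i) the P\'olya-type upgrade of pointwise to locally uniform convergence of the monotone $y_k$ (using continuity of the limit) is what makes the integrators converge weakly as finite measures on $[a,b]$ with convergent total masses, and it also disposes of possible atoms of $\mu_k$ at the endpoint $a$; (ii) the bracketed term is exactly the extended Portmanteau/mapping statement for a bounded, $\mu$-a.e.\ continuous integrand, which applies because $\mu$ is nonatomic and the discontinuity set of the c\`adl\`ag $x$ (augmented by $a,b$) is countable; (iii) the genuinely delicate term $\int_{[a,b]}(x_k-x)\,{\rm d}\mu_k$ is treated via the $J_1$ time-change representation on $[0,T]$ for a continuity point $T>b$, and your shielding of the oscillation-partition points by small closed neighbourhoods, combined with $\limsup_k\mu_k(F)\le\mu(F)$ for closed $F$, is precisely what prevents the pre-limit measures from loading mass onto the jumps of $x$ — the identity $\|\lambda_k^{-1}-{\rm id}\|_{[0,T]}=\|\lambda_k-{\rm id}\|_{[0,T]}$ and the choice of $T$ as a continuity point are both used correctly. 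I see no gap; the only cosmetic remark is that one should state explicitly that $y$, as a pointwise limit of nondecreasing functions, is itself nondecreasing, so that ${\rm d}y$ indeed defines a (nonatomic) finite measure on $[a,b]$, which is implicit in your setup.
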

\begin{proof}[Proof of Theorem \ref{theorem2}]
For notational simplicity we write $j$ for $j(t)$. In view of the Cram\'{e}r-Wold device and the equality $\int_{[0,\,\infty)}\eee^{-\alpha u y} {\rm d} \mathcal{Z}_\alpha^\leftarrow(y)=\alpha u \int_0^\infty \mathcal{Z}_\alpha^\leftarrow(y) \eee^{-\alpha u y}{\rm d}y$ for $u>0$, obtained with the help of integration by parts, it suffices to show that, for any $\ell \in \mn$, any
positive $\lambda_1,\ldots, \lambda_\ell$ and any $0<u_1< \ldots <u_\ell$,
$$
\sum_{i=1}^\ell \lambda_i \frac{c  j^\alpha \sum_{r \geq 1} V_{\lfloor j u_i \rfloor - 1}(t - T_r) \1_{\{T_r\leq t\}}}{ \rho_{\lfloor j u_i \rfloor - 1} t^{\alpha \lfloor j u_i \rfloor}} \overset{{\rm d}}{\longrightarrow}  \sum_{i=1}^\ell \lambda_i \alpha u_i \int_0^\infty \mathcal{Z}_\alpha^\leftarrow(y) \eee^{-\alpha u_i y} {\rm d} y,\quad t\to\infty,
$$
where, as before, $\overset{{\rm d}}{\longrightarrow}$ denotes convergence in distribution. Write, for any $s >0$ and sufficiently large $t$,
\begin{multline*}
\frac{cj^\alpha \sum_{r\geq 1} V_{\lfloor j u_i \rfloor - 1}(t-T_r)\1_{\{T_r\leq t\}}}{\rho_{\lfloor j u_i\rfloor-1}t^{\alpha\lfloor ju_i\rfloor}}=\frac{cj^\alpha\int_{[0,\,t]}V_{\lfloor ju_i \rfloor-1}(t-y){\rm d}N(y)}{\rho_{\lfloor j u_i \rfloor - 1} t^{\alpha \lfloor j u_i \rfloor }}\\=\frac{c j^\alpha \int_{[0,\,t]} N(y) {\rm d}_y(- V_{\lfloor j u_i \rfloor - 1}(t - y)) }{ \rho_{\lfloor j u_i \rfloor - 1} t^{\alpha \lfloor j u_i \rfloor }}\\ = \frac{1}{\rho_{ \lfloor j u_i \rfloor - 1} t^{\alpha( \lfloor j u_i \rfloor - 1)}} \int_{[0,\,s]}\frac{c N(yt/j)}{(t/j)^\alpha} {\rm d}_y (-V_{\lfloor j u_i \rfloor - 1}(t(1-y/j)))\\+ \frac{cj^\alpha}{\rho_{ \lfloor j u_i \rfloor - 1} t^{\alpha \lfloor j u_i \rfloor}} \int_{(st/j,\,t]} N(y) {\rm d}_y(-V_{\lfloor j u_i \rfloor-1}(t-y)).
\end{multline*}
Using \eqref{eq:uniform} we obtain, for each fixed $y>0$, as $t\to\infty$,
\begin{equation}\label{sim_exp}
V_{\lfloor j u_i \rfloor - 1} \left(t\left(1-\frac{y}{j}\right)\right)~\sim~ \rho_{\lfloor j u_i \rfloor - 1} t^{\alpha( \lfloor j u_i \rfloor - 1 )} \left(1 - \frac{y}{j} \right)^{\alpha(\lfloor j u_i \rfloor - 1)}~\sim~
\rho_{\lfloor j u_i \rfloor - 1} t^{\alpha( \lfloor j u_i \rfloor - 1 )} \eee^{-\alpha u_i y}.
\end{equation}
Under the sole assumption $\mmp\{\xi>t\}\sim ct^{-\alpha}$ as $t\to\infty$, an application of part (B.4) of Theorem 3.2 in \cite{Alsmeyer+Iksanov+Marynych:2017} yields
$$
\left(c(t/j)^{-\alpha} N(yt/j)\right)_{y\geq 0}~\Longrightarrow~\left( \mathcal{Z}_\alpha^\leftarrow(y) \right)_{y\geq 0}, \quad t\to\infty$$
on $\mathcal{D}$, where $\Longrightarrow$ denotes weak convergence on a function space. Here, we have used the fact that the assumptions imposed on $j$ ensure that $\lim_{t\to\infty}(t/j(t))=\infty$.
Let $(t_k)_{k\geq 1}$ be any sequence of positive numbers satisfying $\lim_{k\to\infty} t_k=\infty$. According to the Skorokhod representation theorem, there exist $\mathcal{\hat Z}_\alpha^\leftarrow$ a version of $\mathcal{Z}_\alpha^\leftarrow$ and $((\hat{N}_{t_k}(y))_{y\geq 0})_{k\geq 1}$ a version of
$((c(t_k/j(t_k))^{-\alpha}N(yt_k/j(t_k)))_{y\geq 0})_{k\geq 1}$ such that, for all $T>0$,
$$
\lim_{k\to \infty}\sup_{y \in [0,\,T]} | \hat{N}_{t_k}(y)-\mathcal{\hat Z}_\alpha^\leftarrow(y)| = 0 \quad \text{a.s.}
$$
In view of \eqref{sim_exp},
$$
\lim_{k\to \infty} \frac{V_{\lfloor j(t_k) u_i \rfloor - 1}(t_k(1-y/j(t_k)))}{\rho_{ \lfloor j(t_k) u_i \rfloor - 1} t_k^{\alpha( \lfloor j(t_k) u_i \rfloor - 1)}} = \eee^{-\alpha u_i y}.
$$
Using Lemma \ref{lemma6}, with $x_k= \hat{N}_{t_k}$, $x=\mathcal{\hat Z}_\alpha^\leftarrow$, and $y_k$ and $y$ given by the functions on the left-hand and the right-hand side of the last centered formula, respectively, we obtain
\begin{multline*}
\lim_{k \to \infty} \frac{1}{\rho_{ \lfloor j(t_k) u_i \rfloor - 1} t_k^{\alpha( \lfloor j(t_k) u_i \rfloor - 1)}} \int_{[0,\,s]} \hat{N}_{t_k}(y) {\rm d}_y (-V_{\lfloor j(t_k) u_i \rfloor - 1}(t_k(1-y/j(t_k))))\\= \alpha u_i \int_0^s \mathcal{\hat Z}_\alpha^\leftarrow (y) \eee^{-\alpha u_i y} {\rm d} y \quad \text{a.s.}
\end{multline*}
Since the diverging sequence $(t_k)_{k\geq 1}$ is arbitrary, this entails
\begin{multline*}
\sum_{i=1}^\ell \lambda_i \frac{1}{\rho_{ \lfloor j u_i \rfloor - 1} t^{\alpha( \lfloor j u_i \rfloor - 1)}} \int_{[0,\,s]} c(t/j)^{-\alpha}N(yt/j){\rm d}_y (-V_{\lfloor j u_i \rfloor - 1}(t(1-y/j)))\\~\overset{{\rm d}}{\longrightarrow}~\sum_{i=i}^\ell \lambda_i \alpha u_i \int_0^s \mathcal{Z}_\alpha^\leftarrow (y) \eee^{-\alpha u_i y} {\rm d} y,  \quad t\to\infty.
\end{multline*}
In view of
$$\lim_{s\to\infty}\sum_{i=1}^\ell \lambda_i  \alpha u_i \int_0^s \mathcal{Z}_\alpha^\leftarrow (y) \eee^{-\alpha u_i y} {\rm d}y = \sum_{i=1}^\ell \lambda_i  \alpha u_i \int_0^\infty \mathcal{Z}_\alpha^\leftarrow(y) \eee^{-\alpha u_i y} {\rm d} y  \quad \text{a.s.},$$
we are left with showing that, for all $\varepsilon>0$,
$$
\lim_{s\to \infty} \limsup_{t\to \infty} \mmp \biggl\{\frac{j^\alpha}{\rho_{ \lfloor j u_i \rfloor - 1} t^{\alpha \lfloor j u_i \rfloor}} \int_{(st/j,\, t]} N(y) {\rm d}_y(-V_{\lfloor j u_i \rfloor-1}(t-y))>\varepsilon \biggl\} = 0.
$$
Noting that $\me N(y)=V(y)\leq 2Cy^\alpha$ for large $y$, this limit relation is ensured by Markov`s inequality and Lemma \ref{lem:aux}.
\end{proof}

\begin{theorem}\label{theorem3}
Suppose \eqref{eq:estim} and let $t\mapsto j(t)$ be an integer-valued function satisfying $j(t)=o(t^{1/3})$ as $t\to\infty$ and $\lim_{t\to\infty}j(t)=\infty$. Then
$$\frac{(j(t))^\alpha}{\rho_{j(t) - 1} t^{\alpha j(t)}}\Big(N_{j(t)}(t)-\sum_{r\geq 1} V_{j(t) - 1}(t - T_r) \1_{\{T_r\leq t\}}\Big)~ \overset{\mmp}{\longrightarrow}~ 0, \quad t\to\infty.$$
\end{theorem}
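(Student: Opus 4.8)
The plan is to pass to second moments conditionally on the first generation and then to establish a suitable second‑moment bound for the branching processes $N_m$. Write $j=j(t)$ and put $\Delta_j(t):=N_j(t)-\sum_{r\ge1}V_{j-1}(t-T_r)\1_{\{T_r\le t\}}$. Conditionally on $T$, the summands $N^{(r)}_{j-1}$ in $N_j(t)=\sum_{r\ge1}N^{(r)}_{j-1}(t-T_r)\1_{\{T_r\le t\}}$ are independent copies of $N_{j-1}$ that are independent of $T$, so $\sum_{r\ge1}V_{j-1}(t-T_r)\1_{\{T_r\le t\}}=\me[N_j(t)\mid T]$ and, by conditional independence,
\begin{equation*}
\me\big[\Delta_j(t)^2\big]=\me\big[{\rm Var}(N_j(t)\mid T)\big]=\int_{[0,\,t]}{\rm Var}(N_{j-1}(t-y))\,{\rm d}V(y)\le\int_{[0,\,t]}\me\big[N_{j-1}(t-y)^2\big]\,{\rm d}V(y).
\end{equation*}
Since $\rho_{j-1}/\rho_j\sim(\alpha j)^\alpha/(C\Gamma(\alpha+1))$ (combine \eqref{def:fi} with \eqref{eq:ratio}), one has $j^\alpha/\rho_{j-1}\sim C\Gamma(\alpha+1)\alpha^{-\alpha}\rho_j^{-1}$, so by Markov's inequality it suffices to prove
\begin{equation}\label{eq:pp-goal}
\int_{[0,\,t]}\me\big[N_{j-1}(t-y)^2\big]\,{\rm d}V(y)=o\big(\rho_j^{2}t^{2\alpha j}\big),\qquad t\to\infty.
\end{equation}

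The key ingredient is a second‑moment estimate for $N_m$: there is $K>0$ such that, for all $m\in\mn$ and $u\ge0$,
\begin{equation}\label{eq:pp-2mom}
\me\big[N_m(u)^2\big]\le K\,m^{2\alpha}\rho_m^{2}u^{2\alpha m}+\mathcal{R}_m(u),
\end{equation}
with $\mathcal{R}_m(u)$ a lower‑order remainder, explicit in the spirit of the right‑hand side of \eqref{ineq:first}. I would prove \eqref{eq:pp-2mom} by induction on $m$. The base case $m=1$ is ${\rm Var}(N(u))\le\me N(u)^2=O(u^{2\alpha})$: this follows from $\me N(u)^2=V(u)+\me[N(u)(N(u)-1)]$ and $\me[N(u)(N(u)-1)]=2\sum_{r<r'}\mmp\{T_r\le u,\,T_{r'}\le u\}\le 2\int_{[0,\,u]}V(u-z)\,{\rm d}U(z)\le 2V(u)U(u)$, the middle inequality coming from $T_r\ge S_{r-1}$, conditioning on $S_{r-1}$, and $\sum_{r'>r}\mmp\{T_{r'}\le u\mid S_{r-1}=z\}\le V(u-z)$, and the last step from $U(t)\sim Ct^\alpha$, $V(t)\sim Ct^\alpha$ (recalled in Section \ref{sect:assump}). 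For the induction step, conditioning on $T$ gives
\begin{equation*}
\me\big[N_j(u)^2\big]=\int_{[0,\,u]}\me\big[N_{j-1}(u-y)^2\big]\,{\rm d}V(y)+\me\Big[\Big(\sum_{r\ge1}V_{j-1}(u-T_r)\1_{\{T_r\le u\}}\Big)^2\Big].
\end{equation*}
For the second summand I would use the crude bound $\sum_{r\ge1}V_{j-1}(u-T_r)\1_{\{T_r\le u\}}\le N(u)V_{j-1}(u)$, the case $m=1$ of \eqref{eq:pp-2mom}, and the estimate for $V_{j-1}$ from Section \ref{sect:perturbed} (in the precise, unconditional form \eqref{ineq:first}); this produces a contribution of order $u^{2\alpha}\rho_{j-1}^{2}u^{2\alpha(j-1)}\asymp j^{2\alpha}\rho_j^{2}u^{2\alpha j}$ plus lower‑order terms, the factor $j^{2\alpha}$ coming from $\rho_{j-1}/\rho_j\asymp(\alpha j)^\alpha$, and it is exactly this crude bound that injects the power $m^{2\alpha}$ into \eqref{eq:pp-2mom}. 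For the first summand I would plug in the inductive hypothesis and integrate against ${\rm d}V$ using $V(t)\le Ct^\alpha+Dt^\beta$ (\eqref{eq:estim}), the Beta‑integral and the ratio asymptotics \eqref{eq:ratio}, which yields only lower‑order contributions; here one also uses the gamma‑function inequalities behind Lemmas \ref{lem:import}--\ref{lem:estimate} to control the remainders and the small‑argument regime. Collecting terms gives a recursion of the shape $K_j\le({\rm const})\,j^{2\alpha}+\varepsilon K_{j-1}$, whence $K_j=O(j^{2\alpha})$.

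With \eqref{eq:pp-2mom} in hand I conclude \eqref{eq:pp-goal} by inserting it into the integral: the main term contributes
\[
K(j-1)^{2\alpha}\rho_{j-1}^{2}\int_{[0,\,t]}(t-y)^{2\alpha(j-1)}\,{\rm d}V(y)=O\Big(j^{2\alpha}\rho_{j-1}^{2}(\alpha j)^{-\alpha}t^{2\alpha j-\alpha}\Big)=O\big((j^{3}/t)^{\alpha}\,\rho_j^{2}t^{2\alpha j}\big)=o\big(\rho_j^{2}t^{2\alpha j}\big)
\]
precisely because $j(t)=o(t^{1/3})$ (here I used \eqref{eq:estim}, the Beta‑integral and \eqref{eq:ratio} to evaluate the convolution), while the contribution of $\mathcal{R}_{j-1}$ integrated against ${\rm d}V$ is of strictly smaller order under the same restriction $j(t)=o(t^{1/3})$. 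This proves \eqref{eq:pp-goal} and hence the theorem.

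The hard part is the second‑moment bound \eqref{eq:pp-2mom}: one must pin down the exact power of $m$, which here is $m^{2\alpha}$ (forced by the crude bound $\sum_r V_{j-1}(u-T_r)\1_{\{T_r\le u\}}\le N(u)V_{j-1}(u)$ together with the discrepancy $\rho_{j-1}/\rho_j\asymp(\alpha j)^\alpha$), carry it and the remainders $\mathcal{R}_m$ through the convolution recursion without letting the small‑argument corrections accumulate, and secure the base case $\me N(u)^2=O(u^{2\alpha})$ for a perturbed --- rather than ordinary --- renewal process. It is this $m^{2\alpha}$, multiplied by the $(\alpha j)^\alpha$ from $\rho_{j-1}/\rho_j$ and by the $t^{-\alpha}$ gained from one convolution with ${\rm d}V$, that produces the factor $(j^{3}/t)^{\alpha}$ and thereby forces the range $j(t)=o(t^{1/3})$.
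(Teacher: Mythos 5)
Your overall strategy coincides with the paper's: both arguments are Chebyshev/second-moment arguments based on the conditional variance decomposition, and you correctly identify that a bound of order $j^{2\alpha}\rho_{j-1}^2u^{2\alpha(j-1)}$ for the second moment of $N_{j-1}(u)$, convolved once with ${\rm d}V$, yields the factor $(j^3/t)^\alpha$ and hence the restriction $j=o(t^{1/3})$. (A minor slip: after conditioning on $T$ the first term in your display must be $\int_{[0,u]}{\rm Var}\,N_{j-1}(u-y)\,{\rm d}V(y)$, not $\int_{[0,u]}\me N_{j-1}(u-y)^2\,{\rm d}V(y)$; since you only need an upper bound, the direction of the replacement is harmless.)

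However, the argument is incomplete at exactly the point you yourself call the hard part: the uniform bound $\me N_m(u)^2\le K m^{2\alpha}\rho_m^2u^{2\alpha m}+\mathcal{R}_m(u)$, with $K$ independent of $m$ and $u$ and an unspecified remainder, is asserted rather than proved, and the induction you sketch does not close as stated. After one convolution with ${\rm d}V$ the inductive main term is multiplied by a factor of order $(j/u)^\alpha$ (this is your $\varepsilon$), which is a contraction only when $u$ is large compared with $j$; the analogous bound in the paper, \eqref{ineq:Dj_2}, is derived only under the explicit conditions \eqref{scondition}--\eqref{cond_3}, i.e.\ roughly for $j=o(u^{1/2})$. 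But the convolution $\int_{[0,u]}\me N_{j-1}(u-y)^2\,{\rm d}V(y)$ samples the inductive bound at arbitrarily small arguments $u-y$, where the main term $Km^{2\alpha}\rho_m^2(u-y)^{2\alpha m}$ lies far below the true second moment (already $(V_m(u-y))^2$ exceeds it there, by the binomial remainder in \eqref{ineq:first}), so in that regime everything must be carried by $\mathcal{R}_m$; whether these remainders remain of lower order after $j\to\infty$ iterations is precisely the bookkeeping you do not supply. The paper's proof exists to do this bookkeeping: it keeps the exact recursion \eqref{eq:recursive}--\eqref{eq:intdj}, bounds the centered quantities $I_k$ by monotone majorants $h_k$ via \eqref{eq:ij}, imposes the smallness conditions \eqref{scondition}--\eqref{cond_3} together with Lemma \ref{gammainequality} to control the accumulated gamma-factors, and treats the small-argument regime separately by splitting the final integral at $t-j$ and using $V(t)-V(t-j)\le U(j)$, as in \eqref{int:estimation}--\eqref{ineq:max}. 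Your base case $\me N(u)^2=O(u^{2\alpha})$ is fine and the orders you compute are the correct ones, but without an explicit $\mathcal{R}_m$ and a two-regime analysis the central estimate, and hence the theorem, is not established.
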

\begin{proof}
Although the scheme of the proof is analogous to that of Theorem 3.1 in \cite{Iksanov+Marynych+Samoilenko:2022}, technical details are essentially different at places.

For $j \in \mn$ and $t\geq 0$, put $D_j(t) := {\rm Var}\, N_j(t)$ and
$$
I_j(t):= \me \left( \sum_{r\geq1} V_{j-1}(t - T_r) \1_{\{T_r\leq t\}} - V_j(t) \right)^2
$$
with the convention that $V_0(t) = 1$ for $t\geq 0$, so that $I_1(t)=D_1(t)$. We shall use a formula: for $j \geq 2$ and $t\geq 0$,
\begin{multline}\label{eq:recursive}
D_j(t) = \me \left( \sum_{r\geq1}(N_{j-1}^{(r)} (t - T_r) - V_{j-1}(t-T_r)) \1_{{\{T_r\leq t\}}}\right)^2 \\+
\me \left( \sum_{r\geq1} V_{j-1}(t-T_r) \1_{{\{T_r\leq t\}}}  - V_j(t) \right)^2 = \int_{[0,\,t]}D_{j-1} (t-y) {\rm d} V(y) + I_j(t).
\end{multline}
Iterating \eqref{eq:recursive} yields
\begin{equation}\label{eq:intdj}
\int_{[0,\,t]}D_{j-1} (t-y) {\rm d} V(y) = \sum_{k=1}^{j-1} \int_{[0,\,t]}I_k(t-y) {\rm d} V_{j-k} (y), \quad j\geq 2,~ t\geq 0.
\end{equation}
First we show that $I_j$ is bounded from above by a nonnegative and nondecreasing function. We need an inequality obtained in the proof of Theorem 3.1 in \cite{Iksanov+Marynych+Samoilenko:2022}
\begin{multline}\label{eq:ij}
I_j(t)\leq V_{j-1}(t)V_j(t) + 2 \int_{[0,\,t]} V_{j-1}(t-y) V_j(t-y){\rm d}  U(y) - (V_j(t))^2\\\leq V_{j-1}(t)V_j(t) + 2 \int_{[0,\,t]} V_{j-1}(t-y) V_j(t-y){\rm d}U(y).
\end{multline}
Put $\tilde{U} (x) := \sum_{i\geq 1} \mmp\{ S_i\leq x \}$ for $x\in\mr$ and note that $U(x) = \tilde{U} (x)+ 1$ for $x \geq 0$. Assumption $A$ together with $\tilde U(0)=0$ entails
\begin{equation*}
\tilde{U}(x)\leq Cx^\alpha+C_1x^\rho,\quad x\geq 0
\end{equation*}
for some constant $C_1>0$. Integrating by parts and then using the last inequality yields
\begin{multline*}
\int_{[0,\,t]} V_{j-1}(t-y)V_j(t-y) {\rm d} U(y)= V_{j-1}(t)V_j(t)+ \int_{[0,\,t]} V_{j-1}(t-y)V_j(t-y) {\rm d} \tilde{U}(y)\\ = V_{j-1}(t)V_j(t)+\int_{[0,\,t]} \tilde{U}(t-y) {\rm d}(V_{j-1}(y) V_j(y))\\
\leq V_{j-1}(t)V_j(t)+ C\int_{[0,\,t]}(t-y)^\alpha {\rm d}(V_{j-1}(y) V_j(y))+C_1\int_{[0,\,t]}(t-y)^\rho {\rm d}(V_{j-1}(y) V_j(y)).
\end{multline*}
With this at hand, integrating by parts once again we arrive at
\begin{multline}\label{def:hj}
I_j(t) \leq V_{j-1}(t) V_j(t) + 2\int_{[0,\,t]}V_{j-1}(t-y)V_j(t-y) {\rm d} U(y) \leq 3 V_{j-1}(t) V_j(t)\\+ 2 C \alpha \int_0^t V_{j-1}(y)V_j(y)(t-y)^{\alpha-1} {\rm d}y+2 C_1\rho \int_0^t V_{j-1}(y)V_j(y)(t-y)^{\rho-1} {\rm d} y=:h_j(t).
\end{multline}
The function $h_j$ is a nonnegative and nondecreasing function that we were looking for. Combining \eqref{def:hj} with \eqref{eq:recursive} and \eqref{eq:intdj} we infer
\begin{multline}\label{ineq:djhj}
D_{j-1}(t) =
\sum_{k=1}^{j-2 } \int_0^t I_k(t-y) {\rm d} V_{j-k-1}(y) + I_{j-1}(t)\\\leq \sum_{k=1}^{j-2} h_k(t) V_{j-k-1}(t)+h_{j-1}(t), \quad j \geq 2,~t\geq 0.
\end{multline}

Next, we obtain an upper bound for $h_j$ (and for $D_j$) which is valid for large arguments. Fix $j\in\mn$  and $s\geq0$ satisfying
\begin{equation}\label{scondition}
2D \Gamma(\beta+1) j (\alpha(j-1)+\beta+1)^{\alpha-\beta} \leq C\Gamma(\alpha+1)s^{\alpha-\beta}
\end{equation}
and also
\begin{equation}\label{jcondition}
2(\alpha(j-2) +1)^{2\alpha} \leq C \Gamma(\alpha+1) s^\alpha,
\end{equation}
\begin{equation}\label{cond_1}
3(2\alpha j + 1)^\alpha \leq 2 C \Gamma(\alpha+1)s^\alpha,
\end{equation}
\begin{equation}\label{cond_2}
C_1 \Gamma(\rho)(2\alpha j + 1)^{\alpha-\rho} \leq  \Gamma(\alpha+1) s^{\alpha-\rho},
\end{equation}
and
\begin{equation}\label{cond_3}
j\geq 1/\alpha+2.
\end{equation}
Plainly, these inequalities trivially hold for large enough $s$ whenever $j=j(s)=o(s^{1/2})$ as $s\to\infty$. 

Put $r:=5^2$. In view of \eqref{scondition}, we conclude with the help of formula \eqref{ineq:vj} that, for  $1\leq k \leq j$,
$$
V_{k-1}(s)V_k(s)\leq 
r \rho_{k-1} \rho_k s^{\alpha(2k-1)}.
$$
Substituting this into \eqref{def:hj} we obtain
\begin{multline}\label{gre:hk}
h_k(s) \leq  3r \rho_{k-1} \rho_k s^{\alpha(2k-1)}\\+ 2C\alpha r \rho_{k-1} \rho_k \int_0^s  s^{\alpha(2k-1)}(s-y)^{\alpha-1}{\rm d}y+2C_1\rho r \rho_{k-1} \rho_k\int_0^s  s^{\alpha(2k-1)}(s-y)^{\rho-1} {\rm d} y\\=3r \rho_{k-1} \rho_k s^{\alpha(2k-1)} + 2r C \alpha \rho_{k-1}\rho_k {\rm B}(\alpha(2k-1) + 1, \alpha)s^{2\alpha k}\\+ 2r C_1 \rho \rho_{k-1} \rho_k {\rm B}(\alpha(2k-1)+1, \rho) {\rm B}(\alpha(2k-1)+1,\rho)s^{2\alpha k-\alpha+\rho},
\end{multline}
where ${\rm B}$ is the Euler beta-function. The second term on the right-hand side is of the highest order. Сonsider the ratio of the first and the second terms in \eqref{gre:hk}:
\begin{multline*}
\frac{ 3 r \rho_{k-1} \rho_k s^{\alpha(2k-1)}}{2r C \alpha \rho_{k-1}\rho_k {\rm B} (\alpha(2k-1) + 1, \alpha)s^{2\alpha k}} = \frac{ 3 \Gamma(2\alpha k +1)}{2C \Gamma(\alpha+1)\Gamma(\alpha(2k-1)+1)s^\alpha}\\
\leq \frac{3(2\alpha k+1)^\alpha}{2 C \Gamma(\alpha+1) s^\alpha} \leq \frac{3(2\alpha j+1)^\alpha}{2 C \Gamma(\alpha+1) s^\alpha} \leq 1,
\end{multline*}
where the first inequality is justified by Lemma \ref{gammainequality} and the last inequality is secured by \eqref{cond_1}. The ratio of the third and the second terms in \eqref{gre:hk} can be estimated as follows:
\begin{multline*}
\frac{2r C_1 \rho \rho_{k-1} \rho_k {\rm B}(\alpha(2k-1)+1, \rho)s^{2\alpha k - \alpha + \rho} }{2rC \alpha \rho_{k-1}\rho_k{\rm B}(\alpha(2k-1) + 1, \alpha)s^{2\alpha k}} = \frac{C_1 \Gamma(\rho) \Gamma(2\alpha k+1)}{\Gamma(\alpha+1)s^{\alpha-\rho} \Gamma(2\alpha k -\alpha+\rho+1)}\\\leq \frac{ C_1 \Gamma(\rho) (2\alpha k +1)^{\alpha-\rho}}{ \Gamma(\alpha+1) s^{\alpha-\rho}} \leq \frac{ C_1 \Gamma(\rho) (2\alpha j +1)^{\alpha-\rho}}{ \Gamma(\alpha+1) s^{\alpha-\rho}} \leq 1,
\end{multline*}
where the first inequality follows from Lemma \ref{gammainequality} and the last inequality is ensured by \eqref{cond_2}. Inequality \eqref{gre:hk} in combination with the two estimates enables us to conclude that, for $1\leq k\leq j$,
\begin{equation*}
h_k(s) \leq 6r C \alpha \rho_{k-1}\rho_k {\rm B} (\alpha(2k-1) + 1, \alpha) s^{2\alpha k}  =: C_2\rho_{k-1}\rho_k {\rm B}(\alpha(2k-1) + 1, \alpha)s^{2\alpha k}.
\end{equation*}
Invoking \eqref{ineq:djhj} we further obtain
\begin{multline}\label{ineq:Dj}
D_{j-1}(s) \leq C_2 \rho_{j-2}\rho_{j-1}{\rm B}(\alpha(2j-3)+1, \alpha)s^{2\alpha (j-1)}\\+\sum_{k=1}^{j-2} C_2 \rho_{k-1}\rho_k{\rm B}(\alpha(2k-1)+1, \alpha)s^{2\alpha k} V_{j-k-1}(s).
\end{multline}
Write with the help of \eqref{def:fi} and \eqref{ineq:vj}
\begin{multline}\label{sum:ineq}
\sum_{k=1}^{j-2}  \rho_{k-1}\rho_k {\rm B}(\alpha(2k-1)+1, \alpha)s^{2\alpha k} V_{j-k-1}(s)\\ \leq 
5 \sum_{k=1}^{j-2}  \rho_{k-1}\rho_k {\rm B} (\alpha(2k-1)+1, \alpha)s^{2\alpha k} \rho_{j-k-1} s^{\alpha(j-k-1)}\\= 5\Gamma(\alpha) \sum_{k=1}^{j-2}  \frac{ (C\Gamma(\alpha +1))^{j+k-2} \Gamma(\alpha(2k-1) +1) }{\Gamma(\alpha(k-1)+1) \Gamma(\alpha k+1) \Gamma(\alpha(j-k-1)+1) \Gamma(2\alpha k+1)}s^{\alpha(j+k-1)}\\\leq 5(2\alpha + 1)\Gamma(\alpha)\sum_{k=1}^{j-2} \frac{(C \Gamma(\alpha +1))^{j+k-2}}{(\Gamma(\alpha(k-1)+1))^2\Gamma(\alpha(j-k-1)+1)}s^{\alpha(j+k-1)}.
\end{multline}
To justify the last inequality, we use the fact that the gamma-function $\Gamma$ is increasing on $[2,\infty)$, whence, for $k\geq 1$,
\begin{multline*}
\frac{1}{\Gamma(\alpha k + 1)} = \frac{\alpha k + 1}{\Gamma(\alpha k + 2)} \leq \frac{\alpha k + 1}{\Gamma(\alpha(k-1) + 2)} = \frac{\alpha k + 1}{(\alpha(k-1) + 1)\Gamma(\alpha (k-1) + 1)}\\\leq  \frac{\alpha + 1}{\Gamma(\alpha (k-1) + 1)}
\end{multline*}
and
$$
\frac{\Gamma(\alpha(2k-1) + 1)}{\Gamma(2\alpha k + 1)} =
\frac{\Gamma(\alpha(2k-1) + 2)}{\Gamma(2\alpha k + 2)} \frac{2\alpha k + 1}{\alpha(2k-1) + 1} \leq \frac{2\alpha k + 1}{\alpha(2k-1) + 1}\leq \frac{2\alpha  + 1}{\alpha + 1}.
$$
The right-hand side of \eqref{sum:ineq}, with the multiplicative constant omitted, can be bounded from above as follows:
\begin{multline}\label{sum:ineq2}
\frac{1 
}{(\Gamma(\alpha(j-2)+1))^2} \sum_{k=1}^{j-2}\frac{(C \Gamma(\alpha +1))^{j+k-2} (\Gamma(\alpha(j-2)+1))^2}{(\Gamma(\alpha(k-1)+1))^2  \Gamma(\alpha(j-k-1)+1)}s^{\alpha(j+k-1)}\\
\leq \frac{{\rm const} 
}{(\Gamma(\alpha(j-2)+1))^2} \sum_{k=1}^{j-2}(C \Gamma(\alpha +1))^{j+k-2} (\alpha(j-2) + 1)^{2\alpha(j-k-1)} s^{\alpha(j+k-1)}.
\end{multline}
Here and hereafter, ${\rm const}$ denotes a constant whose value is of no importance and may change from one appearance to another. The last inequality is a consequence of
$$
\frac{\Gamma(\alpha(j-2)+1)}{\Gamma(\alpha(k-1)+1)} \leq 
(\alpha(j-2)+1)^{\alpha(j-k-1)},
$$
which follows from Lemma \ref{gammainequality}, and
$
\frac{1}{\Gamma(\alpha(j-k-1)+1)} \leq \frac{1}{\min_{z\in [1,2]}\,\Gamma(z)}<\infty
$
for  $1\leq k \leq j-2$ (the minimum cannot be equal to $0$ because $\Gamma(z)$ is the moment of order $z-1$ of an exponentially distributed random variable with mean one). We proceed by bounding from above the right-hand side of \eqref{sum:ineq2}, up to the multiplicative constant:
\begin{multline*} 
 \frac{
(C \Gamma(\alpha +1))^{2j-3} s^{2\alpha(j-1)}}{(\Gamma(\alpha(j-2)+1))^2}\sum_{k=1}^{j-2} \left( \frac{(\alpha(j-2)+1)^{2\alpha}}{C \Gamma(\alpha+1) s^\alpha} \right)^{j-k-1}\\=\frac{(C \Gamma(\alpha +1))^{2j-3} s^{2\alpha(j-1)}}{(\Gamma(\alpha(j-2)+1))^2}\sum_{k=1}^{j-2} \left(\frac{(\alpha(j-2)+1)^{2\alpha}}{C \Gamma(\alpha+1) s^\alpha} \right)^k \\ \leq \frac{(C \Gamma(\alpha +1))^{2j-3} s^{2\alpha(j-1)}}{(\Gamma(\alpha(j-2)+1))^2} \sum_{k\geq 1}\left( \frac{(\alpha(j-2)+1)^{2\alpha}}{C \Gamma(\alpha+1) s^\alpha} \right)^k\leq \frac{(C \Gamma(\alpha +1))^{2j-3} s^{2\alpha(j-1)}}{(\Gamma(\alpha(j-2)+1))^2}.
\end{multline*}
Here, the last inequality is secured by \eqref{jcondition}.
Combining this with \eqref{ineq:Dj} we conclude that
\begin{equation*}
D_{j-1}(s) \leq C_2 \rho_{j-2}\rho_{j-1}{\rm B}(\alpha(2j-3)+1, \alpha)s^{2\alpha (j-1)}+ \frac{{\rm const} 
(C \Gamma(\alpha +1))^{2j-3}s^{2\alpha(j-1)}}{(\Gamma(\alpha(j-2)+1))^2}.
\end{equation*}
Condition \eqref{cond_3} entails $2j\geq 1/\alpha+3$. This together with monotonicity of the gamma-function on $[2,\infty)$ proves 
$$
\frac{\Gamma(\alpha)(C \Gamma(\alpha +1))^{2j-3}}{\Gamma^2(\alpha(j-2)+1) \rho_{j-2} \rho_{j-1} {\rm B}(\alpha(2j-3) + 1, \alpha)} = \frac{\Gamma(\alpha(j-1)+1) \Gamma(\alpha(2j-2)+1)}{\Gamma(\alpha(j-2)+1) \Gamma(\alpha(2j-3)+1)} \geq 1
$$
and thereupon 
\begin{equation}\label{ineq:Dj_2}
D_{j-1}(s) \leq {\rm const}\, 
\frac{ 
(C \Gamma(\alpha +1))^{2j-3}s^{2\alpha(j-1)}}{(\Gamma(\alpha(j-2)+1))^2}.
\end{equation}
Using \eqref{ineq:Dj_2} we infer
\begin{multline}\label{int:estimation}
\int_{[0,\,t]} D_{j-1}(t-y) {\rm d} V(y) = \int_{[0,\, t-j]} D_{j-1}(t-y){\rm d} V(y) + \int_{(t-j,\, t]} D_{j-1}(t-y){\rm d} V(y)\\ \leq {\rm const}\,\frac{(C \Gamma(\alpha+1))^{2j-3}}{(\Gamma(\alpha(j-2)+1))^2}\int_{[0,\,t]} (t-y)^{2\alpha (j-1)} {\rm d} V(y) + \max_{s \in [0,\, j]} D_{j-1}(s)\, U(j)
\end{multline}
having utilized $V(t)-V(t-j)\leq U(j)$, see formula (40) in \cite{Bohunetal:2022} and its proof. Integrating by parts and then invoking \eqref{eq:estim} we estimate the last integral as follows:
\begin{multline*}
\int_{[0,\,t]} (t-y)^{2\alpha (j-1)} {\rm d} V(y)\leq  2\alpha (j-1)  \int_0^t (Cy^\alpha + Dy^\beta)(t-y)^{2\alpha (j-1) -1} {\rm d}y\\=2\alpha (j-1)(C{\rm B}(2\alpha(j-1), \alpha+1)t^{\alpha(2j-1)}+ D {\rm B}(2\alpha(j-1),\beta+1)t^{2\alpha(j-1) + \beta}\\
= O \left(j {\rm B}(2\alpha(j-1),\alpha+1)t^{\alpha(2j-1)}\right),\quad t\to\infty.
\end{multline*}
To justify the last equality, note that, as $j,t\to\infty$, $${\rm B}(2\alpha(j-1), \alpha+1)t^{\alpha(2j-1)}~\sim~ {\rm const}\,\frac{t^{\alpha(j-1)}}{j^{\alpha+1}},\quad {\rm B}(2\alpha(j-1), \beta+1)t^{2\alpha(j-1)+\beta}~\sim~{\rm const}\,\frac{t^{2\alpha(j-1)+\beta}}{j^{\beta+1}}.$$ Hence, ${\rm B}(2\alpha(j-1), \beta+1)t^{2\alpha(j-1)+\beta}=o({\rm B}(2\alpha(j-1), \alpha+1)t^{\alpha(2j-1)})$ as a consequence of $j=j(t)=o(t^{1/3})$. Thus, the first term on the right-hand side of \eqref{int:estimation} is of the order
$$
\frac{(C \Gamma(\alpha+1 ))^{2j-3}}{(\Gamma(\alpha(j-2)+1))^2}j {\rm B}(2\alpha(j-1),\alpha+1)t^{\alpha(2j-1)}.
$$
The product of this and the squared normalization (appearing in the theorem) vanishes under the assumption $j=j(t)=o(t^{1/3})$ as $t\to\infty$:
\begin{multline*}
\frac{(C \Gamma(\alpha+1))^{2j-3}j {\rm B}(2\alpha(j-1),\alpha+1)t^{\alpha(2j-1)}}{(\Gamma(\alpha(j-2)+1))^2}\frac{j^{2\alpha}}{\rho_{j-1}^2 t^{2\alpha j}}\\
= \frac{\Gamma(2\alpha(j-1))}{\Gamma(\alpha(2j-1)+1)} \frac{(\Gamma(\alpha(j-1)+1))^2}{(\Gamma(\alpha (j-2)+1))^2} \frac{j^{2\alpha+1}}{C t^\alpha}~\sim~ \frac{\alpha^{2\alpha}}{C2^{\alpha+1}}\frac{j^{3\alpha}}{t^\alpha}~\to~ 0,\quad t\to\infty.
\end{multline*}
Further, using \eqref{ineq:Dj_2} and \eqref{ineq:vj},
\begin{multline}\label{ineq:max}
\max_{s \in [0,\, j]}\, D_{j-1}(s) U(j) \leq \left(D_{j-1}(j) + (V_{j-1}(j))^2\right) U(j)\\ \leq \left({\rm const}\,\frac{(C \Gamma(\alpha +1))^{2j-3}j^{2\alpha(j-1)}}{(\Gamma(\alpha(j-2)+1))^2}+r \rho_{j-1}^2 j^{2 \alpha (j-1)} \right) U(j).
\end{multline}
Assumption $A$ entails $U(j)\sim Cj^\alpha$ as $j\to\infty$. Now we show that the right-hand side of \eqref{ineq:max} times the squared normalization vanishes whenever $j=j(t)=o(t)$ and $j(t)\to\infty$ as $t\to\infty$. We start with the first term: 
\begin{multline*}
\frac{j^{2\alpha j} (C \Gamma(\alpha +1))^{2j-3}}{\rho_{j-1}^2 t^{2\alpha j} (\Gamma(\alpha(j-2)+1))^2}U(j)=\frac{1}{C\Gamma(\alpha+1)}\left( \frac{j}{t} \right)^{2\alpha j} \frac{(\Gamma(\alpha(j-1)+1))^2}{(\Gamma(\alpha(j-2)+1))^2} U(j)\\ \sim  \frac{\alpha^{2\alpha}}{C\Gamma(\alpha+1)}\left( \frac{j}{t} \right)^{2\alpha j}j^{3\alpha}~\to~ 0, \quad t\to\infty,
\end{multline*}
because $(j/t)^{2\alpha j}$  converges to zero faster than any negative power of $j$. As for the second term in \eqref{ineq:max}, arguing similarly we infer
$$
\frac{j^{2\alpha j}}{t^{2\alpha j}} U(a_j)~\sim~   \left( \frac{j}{t} \right)^{2\alpha j}C j^\alpha~\to~ 0,\quad t\to\infty.
$$
An application of Markov's inequality completes the proof of Theorem \ref{theorem3}.
\end{proof}

We are ready to prove Theorem \ref{main}.
\begin{proof}[Proof of Theorem \ref{main}]
Equality \eqref{eq:stick} is equivalent to
$$
|\log P_k|= |\log W_1| +\ldots + |\log W_{k-1}| + |\log (1-W_k)|,\quad k\in\mn.
$$
Thus,
$ (|\log P_k|)_{k \geq 1}$ is a globally perturbed random walk generated by $(|\log W|, |\log(1-W)|)$, that is, in the notation of Section \ref{sect:assump}, $(\xi, \eta)=(|\log W|, |\log(1-W)|)$. 

For $j\in\mn$ and $t>0$, put $\psi_j(t):=\sum_{|v|=j}\1_{\{P(v)\geq 1/t\}}$ and observe that $\psi_j(n)=N_j(\log n)$ for $j,n\in \mn$. We shall use a decomposition
\begin{multline*}
K_n( \lfloor j_n u \rfloor ) = \left( K_n( \lfloor j_n u \rfloor )  - \psi_{\lfloor j_n u \rfloor} (n) \right)+\Big(\psi_{\lfloor j_n u \rfloor} (n) - \sum_{r\geq 1} V_{\lfloor j_n u \rfloor - 1}(\log n - T_r) \1_{\{T_r\leq \log n\}}\Big)\\+ \sum_{r \geq 1} V_{\lfloor j_n u \rfloor - 1}(\log n - T_r) \1_{\{T_r\leq \log n\}}  =: Y_1(n,u) + Y_2(n,u) + Y_3(n,u).
\end{multline*}
It suffices to show that, as $n \to \infty$,
\begin{equation}\label{eq:limits}
\frac{ j_n^\alpha Y_i(n,u)}{\rho_{\lfloor j_n u \rfloor - 1} (\log n)^{\alpha \lfloor j_n u \rfloor}}~\overset{\mmp}{\longrightarrow}~ 0, \quad i=1,2,
\end{equation}
where $\overset{\mmp}{\longrightarrow}$ denotes convergence in probability, and
$$
\left( \frac{c j_n^\alpha Y_3(n,u)}{ \rho_{\lfloor j_n u \rfloor - 1} (\log n)^{\alpha \lfloor j_n u \rfloor}} \right)_{u>0} \overset{{\rm f.d.d.}}{\longrightarrow} \left(  \int_0^\infty \eee^{-\alpha u y} {\rm d}\mathcal{Z}_\alpha^\leftarrow(y) \right)_{u>0}.
$$

We are going to apply Theorems \ref{theorem2} and \ref{theorem3} and Lemmas \ref{lemma45} and \ref{lemma46}. While doing so, we replace $t$ with $\log n$ and choose any diverging positive function $t \mapsto j(t)$ satisfying $j(\log n) =j_n$ and $j(t)=o(t^{\min \left( \frac{1}{3}, \frac{\alpha-\beta}{\alpha -\beta +1} \right)})$ as $t\to\infty$. By Lemma \ref{lem:estim1}, Assumptions $A$ and $B$ entail \eqref{eq:estim}. With this at hand, relation \eqref{eq:limits} with $i=2$ follows from Theorem \ref{theorem3}. 
The limit relation involving $Y_3$ is ensured by Theorem \ref{theorem2}.
In view of Markov's inequality, relation \eqref{eq:limits} with $i=1$ follows if we can prove that, with $u>0$ fixed,
$$
\lim_{n \to \infty} \frac{j_n^\alpha \me |K_n(\lfloor j_n u \rfloor)-\psi_{\lfloor j_n u \rfloor}(n)|}{ \rho_{\lfloor j_n u \rfloor - 1} (\log n)^{\alpha \lfloor j_n u \rfloor}}= 0.
$$
In Section 6 of \cite{Buraczewski+Dovgay+Iksanov:2020}, see the top of p.~21, it was shown that
$$
\me |K_n(\lfloor j_n u \rfloor) - \psi_{\lfloor j_n u \rfloor}(n)|\leq n \int_{(n,\,\infty)} x^{-1}{\rm d}(\me \psi_{\lfloor j_n u \rfloor}(x)) + \int_{[1,\, n]}\eee^{-n/x} {\rm d}(\me \psi_{\lfloor j_n u \rfloor}(x)).
$$
By Lemma \ref{lemma46} applied in the particular setting $(\xi, \eta) = (|\log W|, |\log(1-W)|$, so that $V_{\lfloor j_n u \rfloor} (\log x)=\me \psi_{\lfloor j_n u \rfloor}(x)$ for $x \geq 1$,
we obtain, as $ n \to \infty$,
\begin{equation*}
n \int_{(n, \infty)} x^{-1}{\rm d}(\me \psi_{\lfloor j_n u \rfloor}(x)) = \int_{(\log n, \infty)}
e^{\log n - x} {\rm d} V_{\lfloor j_n u \rfloor} (x) = O\left(  V_{\lfloor j_n u \rfloor - 1} (\log n) \right)
\end{equation*}
for each fixed $u>0$. The function $f$ defined by $f(x)=\exp(-\eee^x)$ is decreasing and Lebesgue integrable on $[0,\infty)$. Hence, it is dRi on $[0,\infty)$, see, for instance, Lemma 6.2.1 (a) in \cite{Iksanov:2016}.
By Lemma \ref{lemma45}, with the so defined $f$, as $n\to \infty$,
\begin{equation*}
\int_{[1,\,n]} \eee^{-n/x} {\rm d}(\me \psi_{\lfloor j_n u \rfloor}(x)) = \int_{[0,\, \log n]}\exp(-\eee^{\log n - x}){\rm d} V_{\lfloor j_n u \rfloor}(x)= O\left(  V_{\lfloor j_n u \rfloor - 1} (\log n) \right)
\end{equation*}
for each fixed $u>0$. Invoking \eqref{eq:sim} and the assumption on the growth rate of $j_n$ which particularly entails $j_n=o(\log n)$ as $n\to\infty$ we infer
$$
\frac{j_n^\alpha V_{\lfloor j_n u \rfloor - 1} (\log n) }{ \rho_{\lfloor j_n u \rfloor - 1} (\log n)^{\alpha \lfloor j_n u \rfloor}}~\sim~\frac{j_n^\alpha  }{(\log n)^\alpha}~\to~ 0, \quad n\to\infty
$$
thereby arriving at \eqref{eq:limits} with $i=1$. The proof of Theorem \ref{main} is complete.
\end{proof}

\section{Appendix}

The following formula is rather standard, see, for instance, formula (A7) in \cite{Iksanov+Marynych+Meiners:2016}. 
\begin{lemma}\label{lem:formula}
Let $\gamma>0$ and $\eta$ be a positive random variable with the Laplace transform $\ell$. Then
$$
\me \eta^{-\gamma} = \frac{\gamma}{\Gamma(1+ \gamma)} \int_0^\infty s^{\gamma-1}\ell(s) {\rm d} s,
$$
where $\Gamma$ is the Euler gamma-function. Here, both sides of the equality may be infinite.
\end{lemma}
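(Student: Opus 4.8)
The plan is to start from the elementary gamma-integral representation of the power function: for every $x>0$ and $\gamma>0$,
$$
x^{-\gamma}=\frac{1}{\Gamma(\gamma)}\int_0^\infty s^{\gamma-1}\eee^{-sx}\,{\rm d}s,
$$
which follows from the definition of the Euler gamma-function by the substitution $s\mapsto s/x$. Applying this pointwise with $x=\eta$ (legitimate a.s.\ because $\eta>0$ a.s.) gives
$$
\eta^{-\gamma}=\frac{1}{\Gamma(\gamma)}\int_0^\infty s^{\gamma-1}\eee^{-s\eta}\,{\rm d}s.
$$

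The next step is to take expectations on both sides and interchange $\me$ with the integral over $s$. Since the integrand $(s,\omega)\mapsto s^{\gamma-1}\eee^{-s\eta(\omega)}$ is nonnegative and jointly measurable, Tonelli's theorem applies and the interchange is valid with no integrability hypothesis, both sides being well defined in $[0,\infty]$. This yields
$$
\me\eta^{-\gamma}=\frac{1}{\Gamma(\gamma)}\int_0^\infty s^{\gamma-1}\me\eee^{-s\eta}\,{\rm d}s=\frac{1}{\Gamma(\gamma)}\int_0^\infty s^{\gamma-1}\ell(s)\,{\rm d}s.
$$
Finally, using the functional equation $\Gamma(1+\gamma)=\gamma\,\Gamma(\gamma)$, i.e.\ $1/\Gamma(\gamma)=\gamma/\Gamma(1+\gamma)$, converts this into the asserted identity, and the remark that both sides may be infinite is exactly the content of Tonelli's theorem in the case of a divergent integral.

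There is essentially no obstacle here: the only point requiring a word of care is the justification of the Fubini--Tonelli interchange, and that is immediate from nonnegativity; everything else is the standard gamma-function bookkeeping.
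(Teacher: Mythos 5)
Your proof is correct, and it takes a different route from the paper. You use the gamma-integral representation $x^{-\gamma}=\Gamma(\gamma)^{-1}\int_0^\infty s^{\gamma-1}\eee^{-sx}\,{\rm d}s$ pointwise and then invoke Tonelli to swap $\me$ with the $s$-integral, which by nonnegativity needs no integrability hypothesis and therefore delivers the identity in $[0,\infty]$ in one stroke — including the case where both sides are infinite, exactly as the statement allows. The paper instead uses a probabilistic randomization: it introduces an independent unit-mean exponential $\zeta$, computes $\me(\eta/\zeta)^{-\gamma}$ two ways — once by independence as $\me\zeta^{\gamma}\,\me\eta^{-\gamma}=\Gamma(1+\gamma)\me\eta^{-\gamma}$, and once via the tail formula $\mmp\{\eta/\zeta\leq s\}=\mmp\{\zeta\geq\eta/s\}=\ell(1/s)$ together with $\me Y^{-\gamma}... $ written as $\gamma\int_0^\infty s^{-\gamma-1}\mmp\{Y\leq s\}\,{\rm d}s$, followed by the substitution $s\mapsto 1/s$. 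The two arguments are close in spirit (the paper's computation is a Fubini interchange in disguise, with the exponential playing the role of your kernel $\eee^{-s\eta}$), but yours is more elementary and self-contained, and it handles the infinite case automatically, whereas the paper's proof is stated under the provisional assumption $\me\eta^{-\gamma}<\infty$ and its extension to the divergent case is left implicit. What the paper's device buys is a purely probabilistic derivation with no explicit appeal to Tonelli and a slick appearance of the constant $\Gamma(1+\gamma)$ as a moment of $\zeta$; what yours buys is brevity and full generality without case distinctions.
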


Finally, we give an estimate for the gamma-function.
\begin{lemma}\label{gammainequality}
For $x,y\geq 0$,
\begin{equation}\label{eq:gamma}
\frac{\Gamma(x+1+y)}{\Gamma(x+1)} \leq (x+1+y)^y.
\end{equation}
\end{lemma}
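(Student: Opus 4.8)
The plan is to establish the slightly stronger bound $\Gamma(x+1+y)/\Gamma(x+1)\le (x+1+y)^{y}$ for all $x,y\ge 0$, which contains \eqref{eq:gamma} with room to spare, and to do so by splitting according to the size of the shift $y$.

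\emph{Step 1: the case $y\in[0,1]$, via log-convexity of $\Gamma$.} The endpoints $y=0$ and $y=1$ are trivial (both sides equal $1$ and $x+1$, respectively, up to the bound on the right), so assume $y\in(0,1)$. In the Euler integral $\Gamma(x+1+y)=\int_0^\infty t^{x+y}\eee^{-t}\,{\rm d}t$ one writes $t^{x+y}\eee^{-t}=(t^{x}\eee^{-t})^{1-y}(t^{x+1}\eee^{-t})^{y}$ and applies Hölder's inequality with conjugate exponents $1/(1-y)$ and $1/y$:
$$\Gamma(x+1+y)\le\Big(\int_0^\infty t^{x}\eee^{-t}\,{\rm d}t\Big)^{1-y}\Big(\int_0^\infty t^{x+1}\eee^{-t}\,{\rm d}t\Big)^{y}=\Gamma(x+1)^{1-y}\,\Gamma(x+2)^{y}=\Gamma(x+1)\,(x+1)^{y}.$$
Hence $\Gamma(x+1+y)/\Gamma(x+1)\le(x+1)^{y}\le(x+1+y)^{y}$. (Alternatively one may simply invoke the classical estimate $\Gamma(z+1)/\Gamma(z+s)\le(z+1)^{1-s}$ for $z>0$, $s\in(0,1)$, in place of this computation.)

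\emph{Step 2: the general case, by telescoping.} Write $y=n+\delta$ with $n=\lfloor y\rfloor\in\mn_0$ and $\delta\in[0,1)$, and factor
$$\frac{\Gamma(x+1+y)}{\Gamma(x+1)}=\frac{\Gamma((x+n)+1+\delta)}{\Gamma((x+n)+1)}\cdot\frac{\Gamma(x+1+n)}{\Gamma(x+1)}.$$
By Step 1 applied with $x+n$ in place of $x$, the first factor is at most $(x+n+1)^{\delta}\le(x+1+y)^{\delta}$. Using $\Gamma(z+1)=z\Gamma(z)$ repeatedly, the second factor equals $\prod_{k=1}^{n}(x+k)$ (an empty product, equal to $1$, when $n=0$), and each factor there is at most $x+n\le x+1+y$, so the second factor is at most $(x+1+y)^{n}$. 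Multiplying the two estimates gives $\Gamma(x+1+y)/\Gamma(x+1)\le(x+1+y)^{n+\delta}=(x+1+y)^{y}$, which is stronger than \eqref{eq:gamma}.

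There is essentially no hard step here: the only points deserving a line of care are the degenerate values $y\in\{0,1\}$ and $n=0$, and the appeal to log-convexity of $\Gamma$ (equivalently, the Hölder step) in Step 1; the generous constant $11/10$ in the statement is not even needed. I mention it because in the main text \eqref{eq:gamma} is only ever used through this slack, so keeping the weaker constant there does no harm.
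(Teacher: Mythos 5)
Your proof is correct, and it takes a genuinely different route from the paper. The paper's argument invokes the explicit Stirling-type bounds of Nemes (the two-sided estimate $1<\Gamma(z)/(\sqrt{2\pi}\,z^{z-1/2}\eee^{-z})<1+\tfrac{1}{12z}+\tfrac{1}{288z^2}$), forms the ratio of the two Stirling approximations, and absorbs the factors $\bigl(1+\tfrac{y}{x+1}\bigr)^{x+1/2}\eee^{-y}\le 1$ and $1+\tfrac{1}{12}+\tfrac{1}{288}$ into the constant $11/10$; that is where the constant comes from. You instead use log-convexity of $\Gamma$ (your H\"older step is exactly the Gautschi--Wendel inequality $\Gamma(x+1+y)\le\Gamma(x+1)^{1-y}\Gamma(x+2)^{y}$ for $y\in[0,1]$) together with the functional equation to telescope over integer shifts, and you obtain the sharper bound $\Gamma(x+1+y)/\Gamma(x+1)\le(x+1+y)^{y}$ with constant $1$. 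Your argument is more elementary and self-contained (no external asymptotic estimates with explicit error terms are needed), and it even improves the constant; what the paper's approach buys is brevity given the citation, and a template that would also yield matching lower bounds or uniform two-sided control if those were needed elsewhere. All the steps you flag (the degenerate values $y\in\{0,1\}$, $n=0$, and the conjugate-exponent H\"older application) check out, including the inequalities $x+n+1\le x+1+y$ and $x+k\le x+1+y$ used in the telescoping step, so the lemma as stated follows with room to spare.
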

\begin{proof}\
If $y=0$, the inequality holds trivially. Assume that $y\in\mn$. Using the equality
\begin{equation}\label{eq:gammaeq}
\Gamma(z+1)= z\Gamma(z),\quad z >0,
\end{equation}
we obtain $$\frac{\Gamma(x+1+y)}{\Gamma(x+1)}=(x+y)\cdot\ldots\cdot (x+1)\leq (x+y)^y\leq (x+1+y)^y.$$ Assume now that $y\notin\mn$. Then, by Wendel's inequality \cite{Wendel:1948}, $$\frac{\Gamma(x+1+\{y\})}{\Gamma(x+1)}\leq (x+1)^{\{y\}},$$ where $\{y\}$ is the fractional part of $y$. This in combination with \eqref{eq:gammaeq} entails\footnote{A perusal of the proof reveals that, for $x\geq 0$, $y\geq 1$, $\frac{\Gamma(x+1+y)}{\Gamma(x+1)} \leq (x+y)^y$.} $$\frac{\Gamma(x+1+y)}{\Gamma(x+1)}=(x+y)\cdot\ldots\cdot(x+1+\{y\})\frac{\Gamma(x+1+\{y\})}{\Gamma(x+1)}\leq (x+y)^{\lfloor y\rfloor}(x+1)^{\{y\}}\leq (x+1+y)^y.$$ 
\end{proof}

\noindent {\bf Acknowledgement}. 
One of the referees outlined the way towards obtaining a sufficient condition, which is given in Lemma \ref{lem:suff}. A comment of the other referee enabled us to improve our original version of Lemma \ref{gammainequality}. We gratefully acknowledge the aforementioned referees' contributions and several additional useful comments.

\bibliographystyle{plain}

\end{document}